\newtheorem{theorem}[subsection]{Theorem}
\newtheorem{lemma}[subsection]{Lemma}
\newtheorem{cor}[subsection]{Corollary}
\newtheorem{remark}[subsection]{Remark}
\newtheorem{example}[subsection]{Example}
\newtheorem{definition}[subsection]{Definition}
\title[Invariants of $\text{O}_{2m+1}(\mathbb{F}_{2^s})$]{Invariants of the finite orthogonal groups in odd dimension and even characteristic}
\author{H.E.A. Campbell, R.J. Shank and D.L. Wehlau}
\date{\today}
\address{Department of Mathematics and Statistics, Queen's 
University, Kingston ON, Canada}
\email{eddy@unb.ca}
\address{School of Engineering, Mathematics and Physics, 
University of Kent,
Canterbury, United Kingdom}
\email{R.J.Shank@kent.ac.uk}
\address{Department of Mathematics and Computer Science, Royal Military College of Canada, Kingston ON, Canada}
\email{wehlau@rmc.ca}
\subjclass{Primary 13A50; Secondary 20F55}
\keywords{modular invariant theory, finite classical groups, orthogonal group}
\newcommand{\field}{\mathbb{F}}
\newcommand{\gl}[2]{{{\text{GL}_{{#1}}(\field_{#2})}}}
\newcommand{\SL}[2]{{{\text{SL}_{{#1}}(\field_{#2})}}}
\newcommand{\syp}[2]{{{\text{Sp}_{{#1}}(\field_{#2})}}}
\newcommand{\orthp}[2]{{{\text{O}^{+}_{{#1}}(\field_{#2})}}}
\newcommand{\orth}[2]{{{\text{O}_{{#1}}(\field_{#2})}}}
\newcommand{\PP}{\mathcal{P}}
\newcommand{\V}{\mathcal{V}}
\newcommand{\HH}{\mathcal{H}}
\DeclareMathOperator{\ns}{ns}
\newcommand{\FF}{\mathcal{Q}}
\DeclareMathOperator{\lc}{LC}
\newcommand{\h}{\mathcal{H}}
\renewcommand{\H}{\mathcal{H}}
\newcommand{\frob}{\mathcal{F}}
\newcommand{\basise}{\lambda}
\newcommand{\basisf}{\mu}
\DeclareMathOperator{\rad}{rad}
\DeclareMathOperator{\Span}{Span}
\DeclareMathOperator{\lt}{LT}
\begin{document}
\begin{abstract}
    We describe the ring of invariants for the finite orthogonal groups
in odd dimension and even characteristic acting on the defining representation.  
We construct a minimal algebra generating set and describe the relations among the generators.  This ring of invariants is shown to be 
a complete intersection and thus is
Cohen-Macaulay.  
This extends the previous computation 
of Kropholler, Mohseni Rajaei, and Segal
valid over the field of order 2.
\end{abstract}

\maketitle
\tableofcontents

\section{Introduction}

The fundamental problem in the invariant theory of finite groups is to determine the ring of invariants 
of a representation of a finite group. 
Over a field of characteristic zero, this problem is reasonably well understood; see the excellent survey article by 
Stanley~\cite{stanley-invafinigrouthei:79}.
In positive characteristic the situation is more complex.
If the order of the group is a unit in the field then many of the characteristic zero methods still work. However for modular representations,
i.e., when the characteristic of the field divides the order of the group, new methods and ideas are needed; see 
\cite{benson-polyinvafinigrou:93},
\cite{campbell+wehlau:mit11},
\cite{derksen+kemper-invalggp:08}
or
\cite{neusel+smith-invatheofinigrou:02}.
The defining representations of the finite classical groups provide interesting families of modular representations.
While almost all of the defining representations for these groups are generated by pseudo-reflections, the rings of invariants are rarely polynomial rings. 
In 1911, L.E.~Dickson~\cite{dickson-fundsystinvagene:11} gave an explicit description of the ring of 
invariants of the general linear group over any finite field.   
The rings of invariants for the symplectic groups were computed by David Carlisle and Peter Kropholler in the 1990s, see 
\cite[\S 8.3]{benson-polyinvafinigrou:93}.
The invariants for the finite unitary groups were computed by 
Huah Chu and Shin-Yao Jow \cite{chu+jow-unitary:06}.
For the orthogonal groups there is no general result.  
Let $\field_q$ denote the finite field of order $q$.
Kropholler, Mohseni Rajaei and Segal computed the invariants for
all orthogonal groups defined over $\field_2$ \cite{Kropholler+Rajaei+Segal:05}.
In \cite{campbell+shank+wehlau:24}, we compute the invariants for orthogonal groups of plus type and odd characteristic.
Various small dimensional cases have been computed by
Chiang and Hung \cite{chiang+hung-invorth:93},
Chu \cite{chu-polyinvfourdimorth:01},
Cohen \cite{cohen-ratfuncortho:90}
 and Smith \cite{smith-ringinvorth:99}.
All of these rings of invariants are complete intersections and, therefore, Cohen-Macaulay.
In this paper we  compute the ring of invariants for the defining representation of $\orth{2m+1}{q}$ for $q=2^s>2$. The group is determined by the quadratic form $\xi_0$. Applying Steenrod operations to $\xi_0$ produces invariants
$\xi_i$ of degree $q^i+1$ for $i>0$. 
We construct an invariant $e_1$ of degree $q^{2m}(q-1)/2$.
Applying Steenrod operations to $e_1$ produces invariants $e_i$
of degree $q^{2m}(q^i-1)/2$ for $i>1$. We show  that the ring of invariants
is the complete intersection generated by 
$\{\xi_0,\xi_1,\ldots,\xi_{2m-1}\}\cup\{e_1,\ldots,e_m\}$
subject to relations which rewrite $\xi_{2m-i}^{q^i/2}$ for $i<m$.  
We also show that
$\h:=\{\xi_0,\xi_1,\ldots,\xi_m,e_1,\ldots, e_m\}$ is a homogeneous system of parameters and that the invariant ring is the free module over the algebra generated by $\h$ with basis given by the monomial factors of 
$\prod_{i=1}^{m-1} \xi_{2m-i}^{(q^i/2)-1}$.
We note that the ring of invariants is generated by $\{\xi_0,e_1\}$
as an algebra over the Steenrod algebra. We conjecture that
for the defining representation of any finite classical group,
the ring of invariants is generated by at most two elements  as an algebra over the Steenrod algebra. The conjecture has been verified for the general linear groups, the special linear groups, the symplectic groups and
the orthogonal groups of plus type in odd characteristic.

In Section~\ref{sec: preliminaries} we introduce the problem and the main tools 
including the definition of the Steenrod operations.  
Section~\ref{sec: dicksons} introduces the Dickson invariants, which 
generate the invariants of the general linear group.
In Section~\ref{sec: symplectics} we recall the computation of the invariants for the symplectic group and derive some results special
to characteristic 2.
Section~\ref{orth_invs} defines the orthogonal invariants $e_i$ and develops some of their properties.
In Section~\ref{sec: dim7} we compute the invariants for the group
$\orth{7}{q}$ as a clarifying example illustrating our techniques.
Section~\ref{sec: relations} develops and describes the relations among
our generators for the invariants of $\orth{2m+1}{q}$.
Finally in Section~\ref{sec: ufd} we complete the proof of the main theorem
(Theorem~\ref{main_theorem}).

\section{Preliminaries}\label{sec: preliminaries}

For a vector space $V$, the right action of ${\rm GL}(V)$
on $V$ induces a left action on the dual $V^*$ given by
$(\phi \cdot g)(v)=\phi(g\cdot v)$ for 
$\phi\in V^*$, $g\in {\rm GL}(V)$ and $v\in V$.
The action on $V^*$extends to an action by
algebra automorphisms on the symmetric algebra of $V^*$.
Choosing a basis for $V^*$ allows us to identify the symmetric algebra
of $V^*$ with the polynomial algebra generated by the basis elements.
 In this paper we work over the field $\field_q$ where $q=2^s$.
We study the ring of invariants of the orthogonal group $\orth{2m+1}{q}$ of order $q^{m^2}\prod_{j=1}^m(q^{2j}-1)$
(see \cite[page 81]{Taylor-ClassicalGroups:92}). 
Because we work in characteristic 2, signs are irrelevant.   However, we
choose to use minus signs in certain places to improve the readability 
of some formulae.

Consider the polynomial algebra $S=S_m=\field_q[y_1,\ldots,y_m,x_m,\ldots,x_1]$.
Define $$\xi_0:=z^2+x_1 y_1+x_2y_2+\cdots + x_m y_m\in S[z].$$
We use the ordered basis for $V^*$ given by
    $[y_1,\ldots, y_m, z, x_m,\ldots, x_1]$
with dual basis
$[\basise_1,\ldots, \basise_m, \omega, \basisf_m,\ldots, \basisf_1]$
for $V$.
The group $\orth{2m+1}{q}$ is the subgroup of $\gl{2m+1}{q}$ which fixes $\xi_0$. 
The associated bilinear form, $B$, is alternating, symmetric, degenerate and does not determine the quadratic form 
(see \cite[page 142]{Taylor-ClassicalGroups:92}). 
This bilinear form is given by 
$B(u,v)=\xi_0(u+v)+\xi_0(u)+\xi_0(v)$.
The matrix representing $B$ using our chosen basis is
the $(2m+1)\times(2m+1)$ matrix
$$
\renewcommand{\arraystretch}{0.8} 
\left(\begin{array}{c|c|c}
&0&\\[-4pt]
\mbox{\normalfont\large\bfseries 0}_m&
\vdots&\mbox{\normalfont\large\bfseries J}_m\\
&0&\\
 \hline
0 \cdots0&0&0 \cdots 0\vphantom{a^1}\\
\hline
&0&\vphantom{a^1}\\[-4pt]
\mbox{\normalfont\large\bfseries J}_m
&\vdots&\mbox{\normalfont\large\bfseries 0}_m\\
&0&\\
\end{array}\right)
$$
where $\mbox{\normalfont\large\bfseries 0}_m$ is the $m\times m$ zero matrix and
\makeatletter
\def\Ddots{\mathinner{\mkern1mu\raise\p@
\vbox{\kern7\p@\hbox{.}}\mkern2mu
\raise4\p@\hbox{.}\mkern2mu\raise7\p@\hbox{.}\mkern1mu}}
$$\mbox{\normalfont\large\bfseries J}_m:=\begin{pmatrix}
    0&0&\cdots& 0&1\\
    0&0&\cdots& 1&0\\[-5pt]
    \vdots&\vdots&\Ddots&\vdots&\vdots\\
    0&1&\cdots&0&0\\
    1&0&\cdots&0&0
\end{pmatrix}.$$

The {\it radical} of $B$ is
$$\rad(B):=\{u\in V\mid B(u,v)=0,\,\forall\, v\in V\}=\omega\field_q$$
(the radical of the bilinear form is one dimensional with basis vector $\omega$ dual to $z$) and the {\it radical} of $\xi_0$ is
$$\rad(\xi_0):=\{u\in \rad(B)\mid \xi_0(u)=0\}=\{0\}.$$
Therefore, $B$ is degenerate but $\xi_0$ is non-degenerate.
Note that $\rad(B)$ is an $\orth{2m+1}{q}$ submodule of $V$
and $$\left(V/\rad(B)\right)^*=\Span_{\field_q}\{y_1,\ldots,y_m,x_m,\ldots,x_1\}$$
is an $\orth{2m+1}{q}$ submodule of $V^*$.
Furthermore,
the restriction of $\orth{2m+1}{q}$ to $\left(V/\rad(B)\right)^*$
(and to $S$) is faithful and is the usual action of the symplectic group $\syp{2m}{q}$ 
(see \cite[Theorem 11.9]{Taylor-ClassicalGroups:92}).

The {\it complete Steenrod operator} $\PP(t):S[z]\to S[z,t]$ is the algebra homomorphism determined by $\PP(t)(v)=v+v^q t$ for $v$ homogeneous of degree one. Since the map is linear in degree one, $\PP(t)$ is well-defined. For $f$ homogeneous of degree $d$, the Steenrod operations 
$\PP^i(f)$ are defined by
$$\PP(t)(f)=\sum_{i=0}^d\PP^i(f)t^i.$$
Note that for $i>d$ or $i<0$, $\PP^i(f)=0$.
It is clear that $\PP^0(f)=f$ and $\PP^d(f)=f^q$,
i.e., the {\it stability} property is satisfied.
The Steenrod operations satisfy the {\it Cartan identity}:
for $f_1,f_2\in S[z]$
$$\PP^i(f_1f_2)=\sum_{j=0}^i \PP^{j}(f_1)\PP^{i-j}(f_2).$$
The Steenrod operations also satisfy the {\it Adem relations}: 
for $i<qj$
$$\PP^i\PP^j=\sum_k (-1)^{i+k}
\binom{(q-1)(j-k) -1}{i-qk}\PP^{i+j-k}\PP^{k}.$$
We can extend the action of $\gl{2m+1}{q}$ to $S[z,t]$
by taking $tg=t$ for all $g$. Using this action,
since taking a $q^{th}$ power is linear in $S[z]$, 
we see that $\PP(t)$ commutes with the $\gl{2m+1}{q}$-action
and $\PP^ig=g\PP^i$ for all $i$.

The following lemma is a consequence of the Cartan identity.

\begin{lemma} For $f\in S[z]$, we have $\PP^i(f^q)=0$ unless $q$ divides $i$,
in which case $\PP^i(f^q)=(\PP^{i/q}(f))^q$.
\end{lemma}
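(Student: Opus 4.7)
The plan is to exploit the fact that the complete Steenrod operator $\PP(t)$ is an algebra homomorphism (which is equivalent to the Cartan identity), combined with the additivity of the Frobenius in characteristic $2$.

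First I would iterate the Cartan identity, or equivalently use the fact that $\PP(t)$ is an algebra homomorphism, to obtain
\[
\PP(t)(f^q) = \bigl(\PP(t)(f)\bigr)^q = \left(\sum_{i\ge 0}\PP^i(f)\,t^i\right)^{\!q}.
\]
Next, since we are working over $\field_q$ with $q=2^s$, the $q$-th power map is additive on $S[z,t]$, so the right-hand side collapses to
\[
\sum_{i\ge 0}\bigl(\PP^i(f)\bigr)^q t^{iq}.
\]

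Finally I would match coefficients of $t^j$ on both sides of the identity
\[
\sum_{j\ge 0}\PP^j(f^q)\,t^j \;=\; \sum_{i\ge 0}\bigl(\PP^i(f)\bigr)^q t^{iq}.
\]
The right-hand side involves only powers $t^{iq}$, so $\PP^j(f^q)=0$ whenever $q\nmid j$; and when $j=iq$, comparing the coefficient of $t^{iq}$ yields $\PP^{iq}(f^q)=(\PP^i(f))^q$, which is exactly the claim.

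The argument has no real obstacle; the only subtlety worth noting is that additivity of the $q$-th power is used in the ambient ring $S[z,t]$, which is legitimate since $\field_q$ has characteristic $2$ and $q$ is a power of $2$.
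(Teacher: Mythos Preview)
Your proof is correct and is essentially the argument the paper has in mind: the paper states only that the lemma ``is a consequence of the Cartan identity'' without giving details, and your use of the multiplicativity of $\PP(t)$ together with the additivity of the $q$-th power map is precisely how one unpacks that remark.
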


\begin{lemma}\label{steenrod on linear forms}
    Suppose $v,f\in S[z]$ with $v$ homogeneous of degree one.
    Then $v$ divides $\PP^i(vf)$ for all $i$.
\end{lemma}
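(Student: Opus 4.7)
The plan is a one-line application of the Cartan identity, exploiting the fact that $v$ has degree one. Since the total Steenrod operator acts on a degree-one element by $\PP(t)(v)=v+v^q t$, only two of the Steenrod operations on $v$ are nonzero: $\PP^0(v)=v$ and $\PP^1(v)=v^q$, while $\PP^j(v)=0$ for $j\ge 2$ (noting $\PP^1(v)=v^q$ is just the stability property applied in degree one).

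I would then apply the Cartan identity (as stated in the preliminaries) with $f_1=v$ and $f_2=f$ to obtain
$$\PP^i(vf)=\sum_{j=0}^i \PP^j(v)\,\PP^{i-j}(f).$$
By the observation above, all terms with $j\ge 2$ vanish, so the sum collapses to
$$\PP^i(vf) = v\,\PP^i(f) + v^q\,\PP^{i-1}(f),$$
(with the convention $\PP^{-1}(f)=0$ handling the case $i=0$).

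Both summands are visibly divisible by $v$, since $v^q = v\cdot v^{q-1}$. Therefore $v\mid \PP^i(vf)$ for every $i\ge 0$, which is the claim. There is no real obstacle here: the only subtlety is recognising that the degree-one hypothesis truncates the Cartan sum to just two terms, after which divisibility is immediate.
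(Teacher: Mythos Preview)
Your proof is correct and takes essentially the same approach as the paper: observe that $v$ divides each $\PP^j(v)$ (since the only nonzero ones are $\PP^0(v)=v$ and $\PP^1(v)=v^q$), then apply the Cartan identity. You have simply spelled out the two-term expansion explicitly where the paper leaves it implicit.
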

\begin{proof}
    By definition, $v$ divides $\PP^j(v)$. Therefore, using the Cartan identity, $v$ divides $\PP^i(vf)$.
\end{proof}

It is an immediate consequence of Lemma~\ref{steenrod on linear forms} that if $f$ is a product of linear forms, $f$ divides $\PP^i(f)$.
Define $\overline{\xi}_0:=x_1y_1+x_2y_2+\cdots +x_my_m$
and, for $i>0$, $$\xi_i:=\sum_{j=1}^m (x_jy_j^{q^i}+y_jx_j^{q^i}).$$

 \begin{cor} \label{com_st}(a) $\PP(t)(\xi_0)=\xi_0+\xi_1 t+\xi_0^q t^2$.\\
 (b) $\PP(t)(\xi_1)=\xi_1+2\overline{\xi}_0^q t+\xi_2 t^q+\xi_1^q t^{q+1}$.\\
 (c) For $i>1$, 
 $\PP(t)(\xi_i)=\xi_i+\xi_{i-1}^q t+\xi_{i+1} t^{q^i}+\xi_i^q t^{q^i+1}$.
 \end{cor}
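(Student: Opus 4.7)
The plan is to compute $\PP(t)$ term by term using multiplicativity and the formula $\PP(t)(v^{q^k}) = v^{q^k} + v^{q^{k+1}}t^{q^k}$ for a linear form $v$. This last formula is immediate from the $q$-th power lemma preceding the corollary (applied $k$ times), or equivalently from raising $\PP(t)(v) = v + v^q t$ to the $q^k$-th power and using that Frobenius is linear in characteristic 2.

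For part (a), apply $\PP(t)$ to $z^2$ and to each $x_j y_j$. Since $\PP(t)(z)^2 = (z + z^q t)^2 = z^2 + z^{2q}t^2$ in characteristic $2$, and
\[
\PP(t)(x_j y_j) = (x_j + x_j^q t)(y_j + y_j^q t) = x_j y_j + (x_j y_j^q + x_j^q y_j)t + x_j^q y_j^q t^2,
\]
summing yields $\xi_0 + \xi_1 t + \xi_0^q t^2$, as claimed. For parts (b) and (c), apply $\PP(t)$ separately to $x_j y_j^{q^i}$ and $y_j x_j^{q^i}$:
\[
\PP(t)(x_j y_j^{q^i}) = (x_j + x_j^q t)\bigl(y_j^{q^i} + y_j^{q^{i+1}} t^{q^i}\bigr),
\]
and expand. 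Adding the analogous expression for $y_j x_j^{q^i}$ and summing over $j$ gives, at each power of $t$, a symmetric sum which is identified as $\xi_i$, $\xi_{i-1}^q$, $\xi_{i+1}$, or $\xi_i^q$ using the Frobenius identity $\left(\sum_j a_j\right)^q = \sum_j a_j^q$.

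The only case requiring care is $i=1$, where the $t$-coefficient before summing is $x_j^q y_j^q + y_j^q x_j^q = 2 x_j^q y_j^q$ rather than a piece of $\xi_0^q$; summing gives $2\overline{\xi}_0^q$, which the authors keep with its coefficient $2$ for readability (it is of course zero in characteristic $2$). For $i>1$ the exponents $0, 1, q^i, q^i+1$ arising in the expansion are all distinct, so no such collapse occurs and one reads off (c) directly. There is no genuine obstacle here beyond bookkeeping; the corollary is a direct computation from the definition of $\PP(t)$ together with multiplicativity and the Frobenius property.
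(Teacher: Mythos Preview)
Your proof is correct. The paper states this result as a corollary without proof, and the direct computation you give---expanding $\PP(t)$ on each monomial using multiplicativity and the identity $\PP(t)(v^{q^k})=(v+v^qt)^{q^k}=v^{q^k}+v^{q^{k+1}}t^{q^k}$---is precisely the routine verification the authors leave to the reader.
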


Then we have   
$$\PP^1(\xi_0)=x_1 y_1^q+x_1^qy_1+\cdots+x_m y_m^q+x_m^qy_m=\xi_1\in S^{\syp{2m}{q}}\subset S[z]^{\orth{2m+1}{q}}.$$
and $\PP^1(\xi_1)=2\overline{\xi}_0^q=0$.
Also note that the point-wise stabiliser of $z$ in $\orth{2m+1}{q}$ is isomorphic to $\orthp{2m}{q}$.

\begin{lemma} \label{terminal_var}
For all $g\in\orth{2m+1}{q}$, $S[z](g-1)\subset S$. 
\end{lemma}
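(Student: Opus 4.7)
The plan is to verify that both the subalgebra $S$ and the terminal variable $z$ are sent into $S$ by $g-1$, which handles the generators of $S[z]$.

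First I would note that $S$ is $\orth{2m+1}{q}$-stable: its degree-one part $(V/\rad(B))^*=\Span_{\field_q}\{y_1,\dots,y_m,x_m,\dots,x_1\}$ was identified in the preliminaries as a submodule of $V^*$, so the subalgebra it generates, namely $S$, is preserved. Hence $s(g-1)=s\cdot g-s\in S$ for every $s\in S$.

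Second, I would show $z(g-1)\in S$. Since $g$ acts linearly on $V^*$ and the basis decomposition gives $V^*=S_1\oplus\field_q z$, I may write
\[
z\cdot g=\alpha z+\beta\qquad\text{with}\ \alpha\in\field_q,\ \beta\in S_1.
\]
Substituting into the invariance $\xi_0\cdot g=\xi_0$, writing $\xi_0=z^2+\overline{\xi}_0$, and using that $\overline{\xi}_0\cdot g\in S$ by the previous step, yields
\[
(\alpha z+\beta)^2+\overline{\xi}_0\cdot g=z^2+\overline{\xi}_0.
\]
In characteristic two the squaring map is $\field_q$-linear, so $(\alpha z+\beta)^2=\alpha^2 z^2+\beta^2$. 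Comparing the coefficient of $z^2$ on either side forces $\alpha^2=1$, and since $1$ is the unique square root of $1$ in $\field_q$ we obtain $\alpha=1$. Therefore $z(g-1)=\beta\in S_1\subset S$.

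The only real step, which I would flag as the ``main obstacle,'' is recognising that the characteristic-two identity $(\alpha z+\beta)^2=\alpha^2 z^2+\beta^2$ has no cross term, so the $z^2$-coefficient of the invariance identity can be read off cleanly to pin down $\alpha=1$. Everything else is bookkeeping that exploits the $G$-module structure of $V^*$ already set up in the preliminaries.
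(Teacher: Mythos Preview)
Your proof is correct and is essentially the dual of the paper's argument. The paper works on $V$: it observes that $\rad(B)=\field_q\omega$ is $g$-stable, so $g\omega=\gamma\omega$, and then uses $\xi_0(\omega)=\xi_0(g\omega)=\gamma^2$ to force $\gamma=1$; the conclusion for $z$ follows by duality. You work directly in $V^*$: you write $z\cdot g=\alpha z+\beta$ and read off $\alpha^2=1$ from the $z^2$-coefficient of the polynomial identity $\xi_0\cdot g=\xi_0$. The two computations are mirror images---your $\alpha$ is exactly the paper's $\gamma$---and both exploit the same invariance of $\xi_0$. Your version has the small advantage of staying entirely inside the polynomial ring, which is how the lemma is applied afterwards; the paper's version makes the geometric content (that $g$ fixes the radical of $B$ pointwise) more visible.
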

\begin{proof} 
Since $\rad(B)={\rm Span}_{\field_q}\{\omega\}$ and 
$g(\rad(B))=\rad(B)$, we have $g(\omega)=\gamma \omega$ for some $\gamma\in\field_q$. 
However $\xi_0(\omega)=1$ and $\xi_0(\omega)=\xi_0(g(\omega))=\gamma^2$. Therefore $\gamma=1$ and $g(\omega)=\omega$.
The result then follows from the fact that $z$ is dual to $\omega$.
\end{proof}

Consider $F\in S[z]$. Let $\lc_z(F)\in S$ denote the leading coefficient of $F$ as a polynomial in $z$ with coefficients in $S$. 
Since $\lc_z(\xi_0)=1$, we can divide $F$ by $\xi_0$ to get a quotient $f\in S[z]$ and a remainder $az+b$ with $a,b\in S$.
The following is a consequence of Lemma~\ref{terminal_var}.

\begin{lemma} \label{quotient_lem}
For $G$ a subgroup of $\orth{2m+1}{q}$, if $F\in S[z]^G$ then $\lc_z(F)\in  S^G$.
Furthermore, if $F=f\xi_0 +az+b$ with $a,b\in S$, then both $f$ and $az+b$ are elements of $S[z]^G$.
\end{lemma}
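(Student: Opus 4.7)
The plan is to exploit two features: first, that $\xi_0$ is monic of $z$-degree $2$ and $G$-invariant (because $G\subseteq\orth{2m+1}{q}$ fixes $\xi_0$ by definition), so polynomial division in $z$ by $\xi_0$ produces a unique expression $F=f\xi_0+(az+b)$ with $f\in S[z]$ and $a,b\in S$; and second, Lemma~\ref{terminal_var} tells us that for every $g\in G$, we have $(g-1)z\in S$, i.e., $g(z)=z+c_g$ for some $c_g\in S$. In particular, any $g\in G$ acts on $S[z]$ in a way that preserves $S[z]$-degree in $z$ when restricted to $S$-coefficients of elements of $S[z]$: applying $g$ to a polynomial in $z$ of degree $d$ produces a polynomial in $z$ of the same degree $d$.

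For the first claim, I would write $F=\sum_{i=0}^d a_i z^i$ with $a_d=\lc_z(F)\neq 0$. Applying $g\in G$ gives
$$gF=\sum_{i=0}^d (ga_i)(z+c_g)^i,$$
whose coefficient of $z^d$ is $ga_d$ (since $c_g\in S$ has no $z$). From $F=gF$ I read off $ga_d=a_d$, so $\lc_z(F)=a_d\in S^G$, as required.

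For the second claim, I apply $g\in G$ to $F=f\xi_0+az+b$. Using $g\xi_0=\xi_0$ and $g(z)=z+c_g$,
$$F = gF = (gf)\xi_0 + (ga)(z+c_g) + gb = (gf)\xi_0 + (ga)\,z + \bigl((ga)c_g+gb\bigr).$$
Since $ga,\,(ga)c_g+gb\in S$, the right-hand side is again a decomposition of $F$ as $\xi_0$ times an element of $S[z]$ plus a remainder of $z$-degree at most $1$. Because $\xi_0$ is monic in $z$, such a decomposition is unique, so $gf=f$ and $g(az+b)=(ga)z+((ga)c_g+gb)=az+b$. As this holds for every $g\in G$, both $f$ and $az+b$ lie in $S[z]^G$.

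The only real subtlety is checking that, after applying $g$, the apparent remainder $(ga)z+((ga)c_g+gb)$ still has $z$-degree strictly less than $2$; this is exactly where Lemma~\ref{terminal_var} is used, since it guarantees that $c_g\in S$ rather than lying in $S[z]\setminus S$. With that in hand, uniqueness of polynomial division by the monic $\xi_0$ finishes the argument, and no further work is needed.
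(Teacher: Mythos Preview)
Your proof is correct and is exactly the argument the paper intends: the paper itself gives no explicit proof, stating only that the lemma ``is a consequence of Lemma~\ref{terminal_var},'' and your write-up is the natural unpacking of that remark via the uniqueness of division by the $z$-monic invariant $\xi_0$. There is nothing to add.
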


It follows from Lemma~\ref{quotient_lem} that $S[z]^G$ is generated by $\xi_0$, elements of $S^G$, together with elements of the form $az+b$ with $a\in S^G$ and $b\in S$.
Furthermore, since $(az+b)^2-a^2\xi_0=a^2\overline{\xi}_0+b^2\in S^G$, we see that $b^2\in S^{G_z}$, where $G_z$ is the point-wise stabiliser of $z$ in $G$.
 This means that $b\in S^{G_z}$. Note that $G_z= G\cap \orthp{2m}{q}$.

Suppose $f=az+b\in S[z]^G$ for $G$ a subgroup of $\orth{2m+1}{q}$ and $a,b\in S$.
Using Lemma~\ref{quotient_lem}, $a\in S^G$. Squaring and eliminating $z$ gives 
$$f^2+a^2\xi_0=b^2+a^2\overline{\xi}_0\in S^G.$$ 
For which $F\in S^G$ can we find $f=az+b\in S[z]^G$ such that $F=f^2+a^2\xi_0$?
With this question in mind, for a polynomial $F\in S$ define $\ns(F)$ to be the sum of the {\it non-square terms} of $F$.
Note that since every element of $\field_q$ is a square, a term is a square if and only if the associated monomial is a square.
If $F\in S^G$ and $\ns(F)=a^2\overline{\xi}_0$ with $a \in S^G$, then $b^2=F+\ns(F)=F+a^2\overline{\xi}_0$
determines $b$ and
$$f^2+a^2\xi_0=b^2+a^2\overline{\xi}_0=F.$$

\begin{lemma} \label{inv-cons-lem} Suppose $G\leq\orth{2m+1}{q}$ and $F\in S^G$ with $\ns(F)=a^2\overline{\xi}_0$ for some $a\in S^G$. Then
$b^2:=F+\ns(F)$ determines $b\in S^{G_z}$ and $f:=az+b\in S[z]^G$.
\end{lemma}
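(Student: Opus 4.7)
The plan is to first verify that $b\in S$ is well-defined, then show by direct computation that $f^2\in S^G$, then use the fact that $S[z]$ is an integral domain of characteristic $2$ to upgrade $f^2\in S^G$ to $f\in S[z]^G$, and finally read off $b\in S^{G_z}$ from the $G_z$-invariance of $f$.

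For well-definedness: every element of $\field_q$ is a square (the Frobenius is bijective on the finite field $\field_q$), and the polynomial $F+\ns(F)$ is, by definition of $\ns$, the sum of the square terms of $F$. Each such term has the form $c^2 M^2$ for some $c\in \field_q$ and some monomial $M$, so $F+\ns(F)$ is a perfect square in $S$ and the equation $b^2=F+\ns(F)$ determines a unique $b\in S$. Next I would compute $f^2$ directly in characteristic $2$:
$$f^2=(az+b)^2=a^2z^2+b^2=a^2(\xi_0+\overline{\xi}_0)+b^2 = a^2\xi_0+a^2\overline{\xi}_0+b^2.$$
Substituting $a^2\overline{\xi}_0=\ns(F)$ and $b^2=F+\ns(F)$ collapses this to $f^2=a^2\xi_0+F$. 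Since $a,F\in S^G$ and $\xi_0$ is fixed by all of $\orth{2m+1}{q}$, we conclude $f^2\in S[z]^G$.

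To promote this to $f\in S[z]^G$, pick any $g\in G$. The $G$-action preserves $S[z]$, so $g(f)\in S[z]$. Using characteristic $2$,
$$(g(f)-f)^2=g(f)^2+f^2=g(f^2)+f^2=0,$$
and since $S[z]$ is an integral domain this forces $g(f)=f$. Finally, for $g\in G_z$ we have $g(z)=z$, and since $a\in S^G$ we also have $g(a)=a$; substituting into $g(f)=f$ yields $az+g(b)=az+b$, so $g(b)=b$ and $b\in S^{G_z}$.

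There is no real obstacle here: the only nontrivial step is extracting invariance of $f$ from invariance of $f^2$, and that is immediate from characteristic $2$ together with the fact that $S[z]$ is reduced. The rest is bookkeeping around the definition of $\ns$ and the identity $z^2=\xi_0+\overline{\xi}_0$.
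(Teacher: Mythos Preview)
Your proof is correct and follows essentially the same approach as the paper: compute $f^2=a^2\xi_0+F\in S[z]^G$ via $z^2=\xi_0+\overline{\xi}_0$, then use that $S[z]$ is a domain in characteristic~$2$ to deduce $f\in S[z]^G$. The only cosmetic difference is the order in which the two conclusions are established: the paper first observes $b^2=F+a^2\overline{\xi}_0\in S^{G_z}$ directly (since $\overline{\xi}_0=\xi_0+z^2$ is $G_z$-invariant) and hence $b\in S^{G_z}$, whereas you deduce $b\in S^{G_z}$ afterward from the invariance of $f$.
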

\begin{proof}
Clearly $b^2=F+\ns(F)$ determines $b\in S$.
Since $\ns(F)=a^2\overline{\xi}_0\in S^{G_z}$, we have $b^2\in S^{G_z}$.
Hence $b\in S^{G_z}$. Since $f^2=a^2\xi_0+F$, we see that $f^2\in S[z]^G$.
Thus $f\in S[z]^G$.
\end{proof}

\section{Dickson Invariants}\label{sec: dicksons}

The Dickson invariants are a generating set for the ring of invariants of the general linear group over a finite field
(see \cite[\S 8.1]{benson-polyinvafinigrou:93},  
\cite[\S 3.3]{campbell+wehlau:mit11} or
\cite{wilkerson-primdickinva:83}).
We use $d_{i,m}$ to denote the Dickson invariants for
the action of $\gl{2m}{q}$ on $S$ and let $\widetilde{d}_{i,n}$ denote the Dickson invariants for the action of
$\gl{n}{q}$ on $\field_q[x_1,x_2,\ldots,x_n]$. 
Note that $\widetilde{d}_{i,2m}$ is $d_{i,m}$ up to a relabelling of the variables. Similarly, take $u_m=\prod_{i=1}^mN(y_i)N(x_i)$ and $\widetilde{u}_n=\prod_{i=1}^n N(x_i)$,
where  $N$ denotes the orbit product over the upper triangular unipotent subgroup of the appropriate general linear group.
Note that $\widetilde{d}_{n,n}$ is the orbit product of $x_1$ over 
$\gl{n}{q}$ and $\widetilde{d}_{n,n}=\widetilde{u}_n^{q-1}$.
We also have a matrix description
$$\widetilde{u}_n=\det
\begin{pmatrix}
x_1&x_2&x_3&\cdots&x_n\\
x_1^q&x_2^q&x_3^q&\cdots & x_n^q\\
&&\vdots&&\\
x_1^{q^{n-1}}&x_2^{q^{n-1}}&x_3^{q^{n-1}}&\cdots & x_n^{q^{n-1}}
\end{pmatrix}.$$
For a monomial $\beta\in S$, let $\sigma(\beta)$ denote the orbit sum over the symmetric group on the variables appearing in $\beta$, 
the so-called monomial symmetric function associated to $\beta$.
Using the matrix descriptions of the $\SL{n}{q}$-invariants
(see \cite{wilkerson-primdickinva:83}), since we are in characteristic two, $\widetilde{u}_n=\sigma(x_1x_2^q\cdots x_n^{q^{n-1}})$
and, for $i<n$,  $$\widetilde{u}_n\widetilde{d}_{i,n}=\sigma(x_1x_2^q\cdots x_{n-i}^{q^{n-i-1}} x_{n-i+1}^{q^{n-i+1}}\cdots x_n^{q^n}).$$

\begin{lemma} \label{stn_un} For $0<k<q^{n-1}$, $\PP^k(\widetilde{u}_n)=0$.
\end{lemma}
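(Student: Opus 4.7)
The plan is to compute $\PP(t)(\widetilde{u}_n)$ directly from the matrix expression for $\widetilde{u}_n$ given in the excerpt, and then identify which powers of $t$ can appear with nonzero coefficient. Because $\PP(t)$ is an algebra homomorphism, it commutes with determinant formation, so $\PP(t)(\widetilde{u}_n) = \det\bigl(\PP(t)(x_j^{q^{i-1}})\bigr)$.

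First I would evaluate $\PP(t)$ on each matrix entry. Iterating the unnamed lemma immediately preceding \lemref{steenrod on linear forms} shows that for each $i\geq 0$, $\PP^k(x_j^{q^i})$ vanishes unless $k=0$ or $k=q^i$, and the two surviving values are $x_j^{q^i}$ and $x_j^{q^{i+1}}$ respectively. Hence $\PP(t)(x_j^{q^i}) = x_j^{q^i} + x_j^{q^{i+1}} t^{q^i}$.

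Writing $R_i := (x_1^{q^i},\ldots,x_n^{q^i})$ for the $i$-th row of the original matrix, the $i$-th row of the Steenrod-transformed matrix becomes $R_i + t^{q^i} R_{i+1}$. Expanding the determinant by multilinearity across rows yields
\[
\PP(t)(\widetilde{u}_n) \;=\; \sum_{S\subseteq\{0,1,\ldots,n-1\}} t^{\sum_{i\in S} q^i}\,\det(M_S),
\]
where $M_S$ is the matrix whose $i$-th row is $R_{i+1}$ if $i\in S$ and $R_i$ otherwise. The decisive observation is that $\det(M_S)=0$ whenever some $i<n-1$ satisfies $i\in S$ and $i+1\notin S$, because then rows $i$ and $i+1$ of $M_S$ both equal $R_{i+1}$. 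Consequently only the \emph{up-sets} $S=\{n-k,\ldots,n-1\}$ for $0\leq k\leq n$ contribute.

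Finally I would read off the $t$-exponents from the surviving terms: the up-set of size $k\geq 1$ gives $t$-exponent $q^{n-k}+q^{n-k+1}+\cdots+q^{n-1}$, whose minimum value, attained at $k=1$, is $q^{n-1}$. Therefore $\PP^k(\widetilde{u}_n)=0$ for every $0<k<q^{n-1}$, and the argument is complete. I expect the only real subtlety to be the combinatorial identification of the surviving $S$ together with careful bookkeeping of row indices; once those are settled the conclusion is immediate.
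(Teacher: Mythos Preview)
Your argument is correct, but it takes a genuinely different route from the paper's. The paper introduces an auxiliary algebra map $\psi(v)=v^q-vt^{q-1}$, relates it to $\PP(t)$ via $\psi(f)=\sum_\ell \PP^{d-\ell}(f)(-t^{q-1})^\ell$, and then exploits the factorisation of $\widetilde{u}_n$ as a product of linear forms to identify $\psi(\widetilde{u}_n)/\widetilde{u}_n$ with the $\gl{n}{q}$-orbit product $\prod(t-x_1g)=t^{q^n-1}+\sum_i\widetilde{d}_{i,n}t^{q^{n-i}-1}$; the vanishing for $0<k<q^{n-1}$ is then read off from the gap between the leading exponent and the next one. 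Your proof instead stays with the determinantal description, applies $\PP(t)$ entrywise, expands by row multilinearity, and kills all but the ``up-set'' terms by a repeated-row observation. Your approach is more elementary and entirely self-contained; the paper's approach is less combinatorial but delivers more, since it simultaneously identifies each nonvanishing $\PP^k(\widetilde{u}_n)$ as $\widetilde{u}_n\widetilde{d}_{i,n}$ for the appropriate $i$. For the lemma as stated, either route suffices, and in fact your surviving determinants $\det(M_S)$ for the up-set of size $k$ are exactly the matrices computing $\widetilde{u}_n\widetilde{d}_{k,n}$, so you could recover that extra information too with one more line.
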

\begin{proof} Let $\psi:\field_q[x_1,\ldots,x_n]\to \field_q[x_1,\ldots,x_n][t]$ denote the algebra homomorphism determined by $\psi(v)=v^q-vt^{q-1}$ for $\deg(v)=1$.
For $f$ homogeneous of degree $d$, by comparing $\psi$ and the complete Steenrod operator $\PP(t)$, we see that
$$\psi(f)=\sum_{\ell=0}^d\PP^{d-\ell}(f)(-t^{q-1})^{\ell}.$$
If $v$ is a linear factor of $\widetilde{u}_n$, then the roots of $\psi(v)$ are the non-zero scalar multiples of $v$. 
From this, using the fact that $\psi(\widetilde{u}_n)/\widetilde{u}_n$ is monic of degree $(q-1)\deg(\widetilde{u}_n)$,
 we conclude that
$$\psi(\widetilde{u}_n)/\widetilde{u}_n=\prod\{t-x_1 g\mid g\in \gl{n}{q}\}=t^{q^n-1}+\sum_{i=1}^n \widetilde{d}_{i,n}t^{q^{n-i}-1}.$$ 
From this we conclude $\PP^k(\widetilde{u}_n)$ is either zero or $\widetilde{u}_n \widetilde{d}_{i,n}$ for some $i\in\{0,\ldots,n\}$.
Therefore the first non-trivial Steenrod operation  is $\PP^{q^{n-1}}(\widetilde{u}_n )=\widetilde{u}_n \widetilde{d}_{1,n}$ .
\end{proof}

From \cite[Prop. 1.3]{wilkerson-primdickinva:83}, 
we have the induction formula 
$$\widetilde{d}_{i,n}=\widetilde{d}_{i,n-1}^q+\widetilde{d}_{i-1,n-1}N(x_n)^{q-1}$$
where $N(x_n)=x_n^{q^{n-1}}+ \widetilde{d}_{1,n-1}x_n^{q^{n-2}}+\cdots +\widetilde{d}_{n-1,n-1}x_n$.
Note that $\widetilde{d}_{1,1}=x_1^{q-1}$. Therefore $\widetilde{d}_{1,2}=x_1^{q(q-1)}+N(x_2)^{q-1}=x_1^{q(q-1)}+N(x_2)^{q-2}(x_2^q+x_2x_1^{q-1})$.
Hence $\ns(\widetilde{d}_{1,2})=(N(x_2)x_1)^{q-2}(x_1x_2)=\widetilde{u}_2^{q-2}x_1x_2$. 
Since 
$$\widetilde{d}_{n,n}=\widetilde{u}_n^{q-1}=\widetilde{u}_n^{q-2}\sigma(x_1x_2^q\cdots x_n^{q^{n-1}}),$$ we have $\ns(\widetilde{d}_{n,n})=\widetilde{d}_{n,n}$.

\begin{lemma}\label{ns_di_lem} For $n\geq 2$ 
$$\ns(\widetilde{d}_{1,n})=\widetilde{u}_n^{q-2}\sigma(x_1x_2x_3^q x_4^{q^2} \cdots \, x_n^{q^{n-2}})$$
and, for $1<i<n$, 
$$\ns(\widetilde{d}_{i,n})=\widetilde{u}_n^{q-2}\sigma(x_1x_2x_3^q\cdots x_{n-i+1}^{q^{n-i-1}}x_{n-i+2}^{q^{n-i+1}}\cdots x_n^{q^{n-1}}).$$
\end{lemma}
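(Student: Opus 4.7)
The plan is induction on $n$. The base case $n=2$ is handled in the paper just before the lemma, and the paper also establishes $\ns(\widetilde{d}_{n,n})=\widetilde{d}_{n,n}=\widetilde{u}_n^{q-1}$. For the inductive step I would use the Wilkerson recursion $\widetilde{d}_{i,n}=\widetilde{d}_{i,n-1}^q+\widetilde{d}_{i-1,n-1}N(x_n)^{q-1}$ quoted in the preceding paragraph. Because $q$ is a power of $2$ and $q-2$ is even, both $\widetilde{d}_{i,n-1}^q$ and $N(x_n)^{q-2}=\bigl(N(x_n)^{(q-2)/2}\bigr)^2$ are squares, so
$$\ns(\widetilde{d}_{i,n}) = N(x_n)^{q-2}\cdot\ns\bigl(\widetilde{d}_{i-1,n-1}N(x_n)\bigr).$$
Since $\widetilde{u}_n=\widetilde{u}_{n-1}N(x_n)$, the claim reduces to showing that $\ns\bigl(\widetilde{d}_{i-1,n-1}N(x_n)\bigr) = \widetilde{u}_{n-1}^{q-2}\sigma(N_i)$, where I write $N_i$ for the monomial on the right-hand side of the lemma.

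For $i=1$ the left side is $\ns(N(x_n))$. Expanding $N(x_n)=\sum_{j=0}^{n-1}\widetilde{d}_{j,n-1}x_n^{q^{n-1-j}}$ and extracting non-square parts term by term---noting that $x_n^{q^{n-1-j}}$ is itself a square for $j<n-1$, that the $j=n-1$ term $\widetilde{u}_{n-1}^{q-1}x_n$ is entirely non-square because of the odd power of $x_n$, and substituting the inductive formulas for $\ns(\widetilde{d}_{j,n-1})$---produces an explicit $\widetilde{u}_{n-1}^{q-2}$-multiple of a sum in powers of $x_n$. A direct combinatorial partition of $\sigma(N_1)$ according to the exponent assigned to $x_n$ (value $1$ contributing $x_n\widetilde{u}_{n-1}$; value $q^{n-1-j}$ for $1\le j\le n-2$ contributing the $S_{n-1}$-orbit sum of the corresponding level-$(n-1)$ monomial of the lemma) matches this sum term by term, completing the $i=1$ case.

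For $1<i<n$ the same expansion $\widetilde{d}_{i-1,n-1}N(x_n)=\sum_{j=0}^{n-1}\widetilde{d}_{i-1,n-1}\widetilde{d}_{j,n-1}x_n^{q^{n-1-j}}$ is used. The $j=0$ and $j=n-1$ coefficients are handled directly by the induction hypothesis; the $j=i-1$ term vanishes because $\widetilde{d}_{i-1,n-1}^2$ is a square; and the remaining $j$ require identifying $\ns(\widetilde{d}_{i-1,n-1}\widetilde{d}_{j,n-1})$ with the appropriate $x_n$-slice of $\widetilde{u}_{n-1}^{q-2}\sigma(N_i)$. The hard part will be this last identification. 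My plan is to apply $\ns$ to the product of the two orbit-sum formulas $\widetilde{u}_{n-1}\widetilde{d}_{i-1,n-1}=\sigma_{n-1}(\cdots)$ and $\widetilde{u}_{n-1}\widetilde{d}_{j,n-1}=\sigma_{n-1}(\cdots)$, pull out the square $\widetilde{u}_{n-1}^2$, and pick out those products of factor-terms whose two unique odd-exponent positions (one from each factor) are distinct, since these give exactly the two-odd-position monomials. Arranging this combinatorial pairing so as to match the intended $\sigma$-sum on the right, and verifying that the zero- and higher-odd-position contributions either cancel or are absorbed into the square part, is where the bulk of the work lies.
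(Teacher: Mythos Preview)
Your treatment of the base case and of $i=1$ matches the paper's proof essentially verbatim: both use the Wilkerson recursion, pull out the square factor $N(x_n)^{q-2}$, expand $N(x_n)=\sum_j \widetilde d_{j,n-1}x_n^{q^{n-1-j}}$, apply the inductive formulas for $\ns(\widetilde d_{j,n-1})$, and match against the $x_n$-slicing of $\sigma(N_1)$.

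For $i>1$ the paper does \emph{not} continue with the recursion. Instead it runs an inner induction on $i$ at fixed $n$, using the Steenrod operation that carries $\widetilde d_{i-1,n}$ to $\widetilde d_{i,n}$. Since Steenrod operations send squares to squares, one gets $\ns(\widetilde d_{i,n})=\ns\bigl(\PP^{q^{n-i}}(\ns(\widetilde d_{i-1,n}))\bigr)$; then Lemma~\ref{stn_un} (which says $\PP^k(\widetilde u_n)=0$ for $0<k<q^{n-1}$) kills all Cartan terms on the factor $\widetilde u_n^{q-2}$, and the single surviving Cartan term applies $\PP^{q^{n-i}}$ to the orbit sum, visibly shifting the ``gap'' from $q^{n-i+1}$ to $q^{n-i}$. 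The inductive step for $i>1$ is thus two lines.

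Your proposed route for $i>1$---pushing the Wilkerson recursion and reducing to the identification of $\ns(\widetilde d_{i-1,n-1}\widetilde d_{j,n-1})$ for each $j$---is not wrong in principle, and your observation that only pairs of terms whose unique exponent-$1$ positions are distinct survive is the right starting point. But what remains is a genuine double-coset count: you must show that the resulting sum over pairs of permutations, grouped by the unordered pair of odd positions and their odd exponents, collapses (in characteristic $2$) to exactly the required $S_{n-1}$-orbit sum, with all multiplicities even except on the target terms. You have not done this, and it is considerably more delicate than the $i=1$ case because the two factors now both carry a gap. The paper's Steenrod argument bypasses this combinatorics entirely; I would recommend adopting it.
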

\begin{proof} The proof is by induction on $n$. For $n=2$, we have $\ns(\widetilde{d}_{1,2})=\widetilde{u}_2^{q-2}x_1x_2$ with $\sigma(x_1x_2)=x_1x_2$.
For $n>2$, the proof is by induction on $i$. For $i=1$, the induction formula gives
$$\widetilde{d}_{1,n}=\widetilde{d}_{1,n-1}^q+N(x_n)^{q-1}=\widetilde{d}_{1,n-1}^q+N(x_n)^{q-2}(x_n^{q^{n-1}}+\sum_{j=1}^{n-1}\widetilde{d}_{j,n-1} x_n^{q^{n-j-1}}).$$
Therefore
$$\ns(\widetilde{d}_{1,n})=N(x_n)^{q-2}(\ns(x_n\widetilde{d}_{n-1,n-1})+\sum_{j=1}^{n-2}\ns(\widetilde{d}_{j,n-1}) x_n^{q^{n-j-1}}).$$
Note that $\ns(\widetilde{d}_{n-1,n-1})=\widetilde{d}_{n-1,n-1}=\widetilde{u}_{n-1}^{q-2}\sigma(x_1x_2^q\cdots x_{n-1}^{q^{n-2}})$.
For $1\leq j<n-1$, by induction, 
$$\ns(\widetilde{d}_{j,n-1})=\widetilde{u}_{n-1}^{q-2}\sigma(x_1x_2x_3^q\cdots x_{n-j}^{q^{n-j-2}}x_{n-j+1}^{q^{n-j}}\cdots x_{n-1}^{q^{n-2}}).$$
Since $\widetilde{u}_n=N(x_n)\widetilde{u}_{n-1}$, we have
$$\ns(\widetilde{d}_{1,n})=\widetilde{u}_n^{q-2}\sum_{j=1}^{n-1}\sigma(x_1x_2x_3^q \cdots x_{n-j}^{q^{n-j-2}}x_{n-j+1}^{q^{n-j}}\cdots x_{n-1}^{q^{n-2}})x_n^{q^{n-j-1}}.$$
Therefore $\ns(\widetilde{d}_{1,n})=\widetilde{u}_n^{q-2}\sigma(x_1x_2x_3^q \cdots \, x_n^{q^{n-2}})$.

For $i>1$, we have $\PP^{n-i}(\widetilde{d}_{i-1,n})= \widetilde{d}_{i,n}$. 
Since Steenrod operators take squares to squares, $\ns(\widetilde{d}_{i,n})=\ns(\PP^{n-i}(\ns(\widetilde{d}_{i-1,n})))$.
By induction 
$$\ns(\widetilde{d}_{i-1,n})=\widetilde{u}_n^{q-2}\sigma(x_1x_2x_3^q\cdots x_{n-i+2}^{q^{n-i}}x_{n-i+3}^{q^{n-i+2}}\cdots x_n^{q^{n-1}}).$$
Observe that 
 $$\PP^{n-i}(\sigma(x_1x_2x_3^q\cdots x_{n-i+2}^{q^{n-i}}x_{n-i+3}^{q^{n-i+2}}\cdots x_n^{q^{n-1}}))
 =\sigma(x_1x_2x_3^q\cdots x_{n-i+1}^{q^{n-i-1}}x_{n-i+2}^{q^{n-i+1}}\cdots x_n^{q^{n-1}}).$$
It then follows from Lemma~\ref{stn_un} that
$$\ns(\widetilde{d}_{i,n})=\widetilde{u}_n^{q-2}\sigma(x_1x_2x_3^q\cdots x_{n-i+1}^{q^{n-i-1}}x_{n-i+2}^{q^{n-i+1}}\cdots x_n^{q^{n-1}})$$
as required.
\end{proof}

\section{Symplectic Invariants}\label{sec: symplectics}

The ring of symplectic invariants, $S^{\syp{2m}{q}}$, is the complete intersection
generated by $\{\xi_1,\ldots, \xi_{2m}\}\cup\{d_{1,m},\ldots,d_{2m,2m}\}$
subject to the relations given in 
\cite[Theorem 8.3.11]{benson-polyinvafinigrou:93}.
For $i\leq 2m$, let $R_i$ denote the subalgebra of $S_m$ generated by 
$\{\xi_1,\ldots, \xi_i\}$. We will refer to a monomial $\xi_1^{j_1}\xi_2^{j_2}\cdots\xi_i^{j_i} \in R_i$
as a {\it natural monomial} if for every $k$ 
the base $q$ expansion of the exponent $j_k$ involves only $0$ and $1$. In a certain sense, these monomials are independent of $q$.
For a natural monomial $\beta$, we will call $\xi_j^{q^k}$ a {\it natural factor} of  $\beta$ if $\beta/\xi_j^{q^k}$ is a natural monomial.
We define the {\it natural degree} of $\beta$ to be the number of natural factors.

\begin{lemma} \label{nat_mon_lem} (i) $u_m$ is the sum of all natural monomials in $R_{2m-1}$ of degree $1+q+\cdots +q^{2m-1}$ and natural degree $m$.\\
(ii) $u_md_{i,m}$ is the sum all natural monomials in $R_{2m}$ of degree $1+q+\cdots +q^{2m-i-1}+q^{2m-i+1}+\cdots +q^{2m}$ and natural degree $m$.
\end{lemma}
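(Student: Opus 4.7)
The plan is to expand both sides as polynomials in $y_1,\ldots,y_m,x_m,\ldots,x_1$ and match them monomial-by-monomial. The starting point is the fact, derived in Section~\ref{sec: dicksons}, that in characteristic two both $u_m$ and $u_md_{i,m}$ are monomial symmetric functions of the $2m$ variables: setting $E:=\{0,1,\ldots,2m-1\}$ for part~(i) and $E:=\{0,1,\ldots,2m\}\setminus\{2m-i\}$ for part~(ii), we have
$$u_m=\sum_{\phi}\prod_{k\in E}\phi(k)^{q^k}\qquad\text{respectively}\qquad u_md_{i,m}=\sum_{\phi}\prod_{k\in E}\phi(k)^{q^k},$$
where $\phi$ ranges over all bijections from $E$ to $\{y_1,\ldots,y_m,x_1,\ldots,x_m\}$ (in both cases $|E|=2m$).

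Next, I would set up a bijection between natural monomials of natural degree $m$ and total degree $\sum_{k\in E}q^k$ and perfect matchings of $E$. The natural monomial $\beta=\prod_{j=1}^m\xi_{k_j}^{q^{\ell_j}}$ contributes the exponent multiset $\{\ell_j,\ell_j+k_j\}_{j=1}^m$; since $E$ is a set of distinct elements and $k_j\geq 1$, this multiset equals $E$ exactly when the pairs $\{\ell_j,\ell_j+k_j\}$ form a perfect matching of $E$, and the assignment is one-to-one. The ambient subalgebra ($R_{2m-1}$ for part~(i), $R_{2m}$ for part~(ii)) accommodates the maximum possible pair difference in the relevant $E$. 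Expanding each factor via $\xi_k^{q^\ell}=\sum_{j=1}^m\bigl(y_j^{q^\ell}x_j^{q^{\ell+k}}+x_j^{q^\ell}y_j^{q^{\ell+k}}\bigr)$, I would encode each resulting term as a \emph{diagram}: the underlying matching, together with, for each edge, a label $j\in\{1,\ldots,m\}$ and an orientation specifying which endpoint carries $y_j$ and which carries $x_j$. Each diagram $D$ produces a monomial $\prod_{k\in E}\phi_D(k)^{q^k}$ for a function $\phi_D\colon E\to\{y_1,\ldots,y_m,x_1,\ldots,x_m\}$.

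The heart of the proof is then a counting argument. Given $\phi\colon E\to\{y_1,\ldots,y_m,x_1,\ldots,x_m\}$, set $n_j:=|\phi^{-1}(y_j)|$. A diagram with $\phi_D=\phi$ must label each matching edge with the common index $j$ of its two endpoints and orient it so that the $y_j$-endpoint is marked $y$; in particular the matching must pair $\phi^{-1}(y_j)$ bijectively with $\phi^{-1}(x_j)$ for every $j$. This forces $|\phi^{-1}(x_j)|=n_j$ and then gives $\prod_{j=1}^m n_j!$ diagrams producing $\phi$. If $\phi$ is a bijection all $n_j=1$ and the count is $1$; otherwise some $n_j\geq 2$ and the count is even, hence contributes zero in characteristic two. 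Summing over all matchings (equivalently all natural monomials) and all expansion choices therefore recovers $\sum_{\phi\text{ bijective}}\prod_{k\in E}\phi(k)^{q^k}$, which is $u_m$ in part~(i) and $u_md_{i,m}$ in part~(ii).

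The main obstacle is the counting step: one must verify that the requirement $\phi_D=\phi$ truly reduces to the independent bipartite-matching problem described, with no overcounting. The parity fact that $n_j!$ is even whenever $n_j\geq 2$ is precisely what makes the identity hold in characteristic two.
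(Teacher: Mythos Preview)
Your argument is correct and takes a genuinely different route from the paper's. The paper does not prove the identity from scratch: it cites \cite[Prop.~8.3.3]{benson-polyinvafinigrou:93} to know in advance that $u_m$ (respectively $u_md_{i,m}$) is already expressed in $R_{2m-1}$ (respectively $R_{2m}$) as a sum of natural monomials with $m$ distinct natural factors, and then uses the partition correspondence merely to identify \emph{which} natural monomials occur. Your approach, by contrast, is self-contained: you expand the candidate sum of natural monomials directly in the $x_j,y_j$ and show by the parity count $\prod_j n_j!$ that only the bijective $\phi$'s survive in characteristic two, recovering the monomial symmetric function. What you gain is independence from Benson's result (in particular you prove, rather than assume, that $u_m\in R_{2m-1}$); what the paper gains is brevity, since Benson's formula collapses the work.

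One point deserves a sentence more of justification when you write the proof out. You assert that natural monomials of natural degree $m$ and total degree $\sum_{k\in E}q^k$ are in bijection with perfect matchings of $E$, but the passage from ``total degree equals $\sum_{k\in E}q^k$'' to ``the exponent multiset $\{\ell_j,\ell_j+k_j\}$ equals $E$ as a set'' is not immediate: a priori there could be repetitions in the multiset that produce carries. The missing observation (which the paper does spell out) is a base-$q$ digit-sum count: each of the $m$ natural factors contributes digit sum $2$, so the $2m$ exponents contribute digit sum $2m$ before carries, and any carry strictly lowers the digit sum; since $\sum_{k\in E}q^k$ has digit sum exactly $2m$, there can be no carries and the multiset must coincide with $E$. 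Once you insert this, your bijection and the rest of the counting go through as stated.
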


\begin{proof} Recall that $\deg(u_m)=1+q+\cdots+q^{2m-1}$. It follows from \cite[Prop. 8.3.3]{benson-polyinvafinigrou:93}
that $u_m$ is the sum of natural monomials with $m$ distinct natural factors.
Using the matrix description of the Dickson invariants,
$u_m=\sigma(x_1x_2^q\cdots x_m^{q^{m-1}}y_m^{q^m}\cdots y_1^{q^{2m-1}})$.
Each term of $\sigma(x_1x_2^q\cdots x_m^{q^{m-1}}y_m^{q^m}\cdots y_1^{q^{2m-1}})$ appears in a unique natural monomial of degree 
$1+q+\cdots +q^{2m-1}$ and natural degree $m$. To see this,
for each term 
$x_1^{\alpha(1)}x_2^{\alpha(2)}\cdots x_m^{\alpha(m)}y_m^{\alpha(m+1)}\cdots y_1^{\alpha(2m)}$, we associate the partition of $\{1,q,\ldots,q^{2m-1}\}$
into subsets of size $2$ given by 
$$\{\{\alpha(1),\alpha(2m)\},\{\alpha(2),\alpha(2m-1)\}
\ldots,\{\alpha(m),\alpha(m+1)\}\}.$$
To each subset of size $2$, say $\{q^j,q^k\}$ with $j<k$, we associate the natural factor $\xi_{k-j}^{q^j}$. The term appears in the natural monomial given by the product of the resulting natural factors. Summing the natural monomials associated to the partitions gives $u_m$. Clearly these natural monomials have degree $1+q+\cdots +q^{2m-1}$ and natural degree $m$. Suppose $\beta$ is a natural monomial of degree
$1+q+\cdots +q^{2m-1}$ and compute $\deg(\beta)$
by summing the degrees of the natural factors base $q$. If this sum is performed without carries, then $\beta$ is associated to a partition. Otherwise the natural degree of $\beta$ is greater than $m$. This completes the proof of part (i).

The proof of (ii) is similar to the proof of (i).
From \cite[Prop. 8.3.3]{benson-polyinvafinigrou:93},
$u_md_{i,m}$ is the sum of natural monomials with $m$ distinct natural factors.
The matrix description of the Dickson invariants gives $u_md_{i,m}$
as an orbit sum of monomials. To each term in the orbit sum, we associate
a partition of $\{1,q,\ldots,q^{2m}\}\setminus\{q^{2m-i}\}$ into subsets of size $2$ and to each partition we associate a natural monomial of
degree $1+q+\cdots +q^{2m-i-1}+q^{2m-i+1}+\cdots +q^{2m}$ and natural degree $m$.
\end{proof}

\begin{example} 
$u_2=\xi_3\xi_1^q+\xi_2^{q+1}+\xi_1^{q^2+1}$ and $u_2d_{1,2}=\xi_4\xi_1^q+\xi_3^q\xi_2+\xi_2^{q^2}\xi_1$.
\end{example}

\begin{remark} \label{rem_u_m-1} By definition, $u_{m-1}$ is an element of $S_{m-1}$. Since $u_{m-1}\in R_{2m-3}$, we can use the inclusion of $R_{2m-3}$ into $S_m$ to interpret 
$u_{m-1}$ as an element of $S_m$. Using this interpretation and Lemma~\ref{nat_mon_lem}, $u_{m-1}$ is the sum of natural monomials of degree $1+q+\cdots +q^{2m-3}$.
Define $\overline{u_{m-1}d_1}\in R_{2m-2}$ by $\overline{u_{m-1}d_1}:=\PP^{q^{2m-3}}(u_{m-1})$.
Similarly, for $0<i<2m-2$, define  $$\overline{u_{m-1}d_{i+1}}:=\PP^{q^{2m-3-i}}(\overline{u_{m-1}d_i})\in R_{2m-2}.$$
Note that the embedding of $R_{2m-2}$ in $S_{m-1}$ takes  $\overline{u_{m-1}d_i}$ to $u_{m-1}d_{i,m-1}$
and, by Lemma~\ref{nat_mon_lem},  $\overline{u_{m-1}d_i}$ is the sum of  natural monomials in $R_{2m-2}$ of degree $1+q+q^2+\cdots +q^{2m-2}-q^{2m-2-i}$.
\end{remark}

Define a $2m\times (2m+1)$ matrix with entries in $R_{2m}$ by
$$M_m:=\begin{pmatrix}
      0& \xi_{1}& \xi_{2}      &\xi_3   &\xi_4&& \cdots && \xi_{2m}\\
\xi_1& 0         & \xi_{1}^q  & \xi_{2}^q   &  \xi_3^q&& \cdots && \xi_{2m-1}^q\\
\xi_2& \xi_1^q& 0            &\xi_1^{q^2} &\xi_2^{q^2} && \cdots && \xi_{2m-2}^{q^2}\\
&&&&\vdots&&&& \\
\xi_{m-1}&\xi_{m-2}^q&\cdots &\xi_1^{q^{m-2}}&0&\xi_1^{q^{m-1}}&\xi_2^{q^{m-1}}&\cdots &\xi_{m+1}^{q^{m-1}}\\
1&0&0&0&& \cdots &&0& P_{m,m}\\
0&1&0&0&&\cdots &0& P_{m-1,m-1}^q&P_{m-1,m}\\
&&&&\vdots&&&& \\
0&\cdots&0&1&0&P_{1,1}^{q^{m-1}}& P_{1,2}^{q^{m-2}}&\cdots&P_{1,m}
\end{pmatrix}
$$ 
where $P_{i,j}$ are defined as in \cite[Prop. 8.3.7]{benson-polyinvafinigrou:93}.
The matrix $M_m$ is the augmented coefficient matrix for the relations given in \cite[Theorem 8.3.11]{benson-polyinvafinigrou:93},
compare with the displayed matrix equation on page 96 of \cite{benson-polyinvafinigrou:93}. 
Let $M_m(j)$ denote the minor of $M_m$ formed by removing column $j$ from $M_m$.

Observe that removing row $1$, row $m$, column $1$ and column $2m+1$ from $M_m$ gives $\frob(M_{m-1})$, 
the matrix formed by taking the $q^{th}$ power of the entries of  $M_{m-1}$. 
Using this and computing $M_m(2m+1)$ by expanding first along row $m+1$ and then along row $1$ gives
\begin{equation}\label{expansion} M_m(2m+1)=\sum_{j=1}^{2m-1} \xi_j M_{m-1}(j)^q.\end{equation}
Lemma~\ref{nat_mon_lem} and Remark~\ref{rem_u_m-1} give
\begin{equation} \label{um-expr} u_m=\xi_{2m-1}u_{m-1}^q+\sum_{j=1}^{2m-2}\xi_j\overline{u_{m-1}d_{2m-j-1}}^q.
\end{equation}

\begin{theorem} \label{Oodd_det_thm} $u_m=M_m(2m+1)$ and $\overline{u_m d_i}=M_m(2m+1-i)$.
\end{theorem}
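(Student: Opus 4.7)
My plan is to prove both identities together by induction on $m$, with the case $m=1$ handled by direct computation: $M_1$ is the $2\times 3$ matrix with rows $(0,\xi_1,\xi_2)$ and $(1,0,P_{1,1})$, so $M_1(3)=\xi_1=u_1$, $M_1(2)=\xi_2=\PP^q(\xi_1)=\overline{u_1 d_1}$ by Corollary~\ref{com_st}, and similarly $M_1(1)=\xi_1 P_{1,1}$ matches $\overline{u_1 d_2}=\PP^1(\xi_2)$.

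For the inductive step on the first identity, I combine \eqnref{expansion} and \eqnref{um-expr} with the inductive hypothesis $u_{m-1}=M_{m-1}(2m-1)$ and $\overline{u_{m-1}d_k}=M_{m-1}(2m-1-k)$, which gives $\overline{u_{m-1}d_{2m-j-1}}=M_{m-1}(j)$ for $1\le j\le 2m-2$. Substituting into \eqnref{um-expr} produces
\[u_m=\xi_{2m-1}M_{m-1}(2m-1)^q+\sum_{j=1}^{2m-2}\xi_j M_{m-1}(j)^q=\sum_{j=1}^{2m-1}\xi_j M_{m-1}(j)^q,\]
which by \eqnref{expansion} is $M_m(2m+1)$, proving the first identity.

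For the second identity, I would proceed by a second induction on $i$, anchored at $i=0$ by the case just established. For the step, I would derive an analogue of \eqnref{expansion} for $M_m(2m+1-i)$ by cofactor expansion along rows $1$ and $m+1$ of $M_m$. Concurrently, decomposing the natural monomials of Lemma~\ref{nat_mon_lem}(ii) according to which natural factor absorbs the ``missing'' exponent $q^{2m-i}$, combined with the $R_{2m-2}\subset S_m$ embedding described in Remark~\ref{rem_u_m-1}, would yield a recursion for $\overline{u_m d_i}$ analogous to \eqnref{um-expr}. Matching term-by-term via the inductive hypothesis on $m-1$ then closes the induction.

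The main obstacle is handling the $P_{i,j}$ entries when expanding $M_m(2m+1-i)$ for $i>0$: unlike the case $i=0$, where removing column $2m+1$ leaves row $m+1$ with a single nonzero entry, removing an interior column retains both the $1$ in column $1$ and the $P_{m,m}$ in column $2m+1$, producing additional contributions that must align precisely with the natural-monomial recursion for $\overline{u_m d_i}$. A cleaner alternative is to exploit that $\PP(t)$ is an algebra homomorphism (by the Cartan identity), so that $\PP^{q^{2m-i}}(M_m(2m+2-i))$ equals the coefficient of $t^{q^{2m-i}}$ in the determinant of the matrix obtained by applying $\PP(t)$ entrywise, reducing the inductive step to the entrywise Steenrod computations provided by Corollary~\ref{com_st}.
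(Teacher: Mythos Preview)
Your base case and your inductive step for the first identity $u_m=M_m(2m+1)$ are correct and follow the paper exactly. The problem is the second identity. Both routes you propose---a secondary induction on $i$ via cofactor expansion, and the Steenrod approach---founder on the $P_{i,j}$ entries in the lower-right block of $M_m$. You yourself flag this for the cofactor route. For the Steenrod route, Corollary~\ref{com_st} only tells you how $\PP^k$ acts on powers of $\xi_j$; it says nothing about $\PP^k(P_{i,j})$, and the paper supplies no such formula. So the claim that the computation ``reduces to the entrywise Steenrod computations provided by Corollary~\ref{com_st}'' is not justified, and without it the argument does not close.

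The paper's argument for the second identity is a one-line observation you have overlooked: Cramer's rule. By construction $M_m$ is the augmented coefficient matrix of the linear system, in the unknowns $d_{1,m},\dots,d_{2m,m}$, given by the symplectic relations of \cite[Theorem~8.3.11]{benson-polyinvafinigrou:93}. Once $M_m(2m+1)=u_m$ is established (this is the determinant of the coefficient block), Cramer's rule in $S_m$ yields $d_{i,m}=M_m(2m+1-i)/M_m(2m+1)$, hence $u_m d_{i,m}=M_m(2m+1-i)$. Both sides lie in the subalgebra $R_{2m}\subset S_m$, so this is exactly $\overline{u_m d_i}=M_m(2m+1-i)$. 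No further induction, cofactor bookkeeping, or Steenrod computation is required, and the $P_{i,j}$ never need to be unpacked.
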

\begin{proof} The proof is by induction on $m$. For $m=1$ we have
$$M_1=\begin{pmatrix} 0&\xi_1&\xi_2\\ 1&0&P_{1,1}\end{pmatrix},$$
which gives $M_1(3)=\xi_1=u_1$, $M_1(2)=\xi_2=\overline{u_1d_1}$ and $M_1(1)=\xi_1P_{1,1}=\xi_1^q=\overline{u_1d_2}$,
as required.

For $m=2$, we have $u_2=\xi_3\xi_1^q+\xi_2^{q+1}+\xi_1^{q^2+1}$
and $M_2(5)=\xi_3\xi_1^q+\xi_2\xi_2^q+\xi_1P_{1,1}$. Since $P_{1,1}=\xi_1^{q-1}$, this gives $u_2=M_2(5)$.
The matrix form for the relations in $S_2^{\syp{4}{q}}$ is 
$$\begin{pmatrix} 0&\xi_1&\xi_2&\xi_3\\
\xi_1&0&\xi_1^q&\xi_2^q\\
1&0&0&0\\
0&1&0& P^q_{1,1}
\end{pmatrix}
\begin{pmatrix} d_{4,2} \\ d_{3,2} \\ d_{2,2} \\ d_{1,2} \end{pmatrix} =
\begin{pmatrix} \xi_4 \\ \xi_3^q \\ P_{2,2} \\ P_{1,2} \end{pmatrix} .
$$
Since we are in characteristic two, Cramer's rule gives
$d_{1,2}=M_2(4)/M_2(5)$, $d_{2,2}=M_2(3)/M_2(5)$, $d_{3,2}=M_2(2)/M_2(5)$, and $d_{4,2}=M_2(1)/M_2(5)$.
Note that these quotients are in $S_2$. Scaling by $u_2=M_2(5)$,  we get relations in $R_4\subset S_2$:
$\overline{u_2d_1}=M_2(4)$, $\overline{u_2d_2}=M_2(3)$, $\overline{u_2d_3}=M_2(2)$, $\overline{u_2d_4}=M_2(1)$.
This completes the proof for $m=2$.

Suppose $m>2$. Using the induction hypothesis, $u_{m-1}=M_{m-1}(2m-1)$ and $\overline{u_{m-1}d_i}=M_{m-1}(2m-1-i)$.
Substituting into Equation~\ref{um-expr} gives
$$u_m=\xi_{2m-1} M_{m-1}(2m-1)^q+\sum_{j=1}^{2m-2}\xi_j M_{m-1}(j)^q.$$
Using Equation~\ref{expansion} gives $u_m=M_m(2m+1)$.
It then follows from Cramer's rule that $\overline{u_m d_i}=M_m(2m+1-i)$.
 \end{proof}

In the following $\field$ is the algebraic closure of $\field_q$ and for an ideal $I=\langle f_1,\ldots,f_k\rangle\subset\field_q[V]$, we write
$\V(f_1,\ldots,f_k)$ for the variety in 
$\overline{V}:=V\otimes\field$ determined by $I$.
For $v\in \overline{V}$ we use $\frob(v)$ to denote the  Frobenius map on $v$. For $v\in\overline{V}$, we have
$$v=z(v)\omega+\sum_{i=1}^m(y_i(v)\basise_i+x_i(v)\basisf_i)$$
and
$$\frob(v)=(z(v))^q\omega+\sum_{i=1}^m\left((y_i(v))^q\basise_i+(x_i(v))^q\basisf_i\right).$$

\begin{theorem} \label{thm-var-sp} 
$\V(\xi_1,\ldots,\xi_{m})=
\cup\{g\V(y_1,\ldots,y_m)\mid g\in\orth{2m+1}{q}\}$.
\end{theorem}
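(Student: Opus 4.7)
The easy inclusion $g\V(y_1,\ldots,y_m)\subseteq\V(\xi_1,\ldots,\xi_m)$ is immediate: every monomial of $\xi_i=\sum_j(x_jy_j^{q^i}+y_jx_j^{q^i})$ contains some $y_j$, so $\xi_i$ vanishes on $\V(y_1,\ldots,y_m)$, and this extends to all translates by $\orth{2m+1}{q}$-invariance of the $\xi_i$.

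For the reverse inclusion, I would reduce to an isotropy question in $V/\rad(B)$, on which $\orth{2m+1}{q}$ acts faithfully as $\syp{2m}{q}$ by Section~\ref{sec: preliminaries}. For $v\in\overline V$, write $\bar v$ for the image in $(V/\rad(B))\otimes\field$. Unwinding the definitions with the matrix description of $B$ from Section~\ref{sec: preliminaries} gives
$$\xi_i(v)=B(\bar v,\frob^i(\bar v))$$
for $i\geq 1$, where $B$ is the non-degenerate symplectic form on $V/\rad(B)$. Since $B$ has $\field_q$-coefficients, raising to $q^j$-th powers yields $B(\frob^j(\bar v),\frob^{j+i}(\bar v))=0$ for all $j\geq 0$ and $1\leq i\leq m$. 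Let $W_{\bar v}$ be the $\field$-span of the Frobenius orbit $\{\frob^j(\bar v):j\geq 0\}$; it is Frobenius-stable and therefore of the form $W\otimes\field$ for a $\field_q$-subspace $W$ of $V/\rad(B)$.

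The key step is to show $W_{\bar v}$ is totally isotropic. If $\dim_{\field} W_{\bar v}\geq m+1$, then $\bar v,\frob(\bar v),\ldots,\frob^m(\bar v)$ would be $\field$-linearly independent and pairwise $B$-orthogonal, spanning a totally isotropic subspace of dimension $m+1$ in the $2m$-dimensional symplectic space $(V/\rad(B))\otimes\field$, which is impossible. Hence $\dim_{\field} W_{\bar v}\leq m$, and $W_{\bar v}$ is already spanned by $\bar v,\frob(\bar v),\ldots,\frob^{m-1}(\bar v)$; the pairwise $B$-values among these spanning vectors are all zero (the index-differences lie in $\{1,\ldots,m-1\}$), so $W_{\bar v}$, and therefore $W$, is totally isotropic of $\field_q$-dimension at most $m$.

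Witt's theorem for the symplectic group $\syp{2m}{q}$ then produces $\tilde g\in\syp{2m}{q}$ taking $W$ into the standard maximal isotropic subspace spanned by the images of $\basisf_1,\ldots,\basisf_m$. Lifting $\tilde g$ to $g\in\orth{2m+1}{q}$ via the isomorphism $\orth{2m+1}{q}\cong\syp{2m}{q}$, we conclude $y_j(gv)=0$ for all $j$, i.e., $gv\in\V(y_1,\ldots,y_m)$. The main obstacle is the isotropy argument for $W_{\bar v}$: it is the interplay between the vanishing of $\xi_i$ for $i\leq m$ and the standard bound on dimensions of isotropic subspaces that forces full isotropy; once this is in hand, the remainder is a straightforward application of Witt's theorem and the identification of $\orth{2m+1}{q}$ with $\syp{2m}{q}$.
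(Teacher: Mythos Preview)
Your argument is correct and takes a genuinely different route from the paper's. The paper proceeds by induction on $m$: given $v\in\V(\xi_1,\ldots,\xi_m)$ with $y_m(v)\neq 0$, it scales so that $y_m(v)=1$, forms the difference $w=v-\frob(v)$, checks that $\xi_i(w)=0$ for $i<m$ and $y_m(w)=0$, and applies the inductive hypothesis to $w$ to force $y_j(gv)\in\field_q$; two explicit elements $h,\alpha\in\orth{2m+1}{q}$ are then written down to kill $y_m$ and return to the case $y_m=0$. Your approach replaces all of this with the single observation $\xi_i(v)=B(\bar v,\frob^i(\bar v))$, a clean dimension count forcing the Frobenius-orbit span $W_{\bar v}$ to be totally isotropic of dimension at most $m$, Galois descent to an $\field_q$-subspace, and a one-line appeal to Witt's theorem together with the isomorphism $\orth{2m+1}{q}\cong\syp{2m}{q}$. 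What you gain is a non-inductive, conceptual proof that makes transparent why $m$ of the $\xi_i$ suffice; what the paper's proof buys is complete self-containment (no Witt, no descent) and explicit group elements, which fits its hands-on style. One small point worth making explicit in your write-up: the pairwise orthogonality of $\bar v,\frob(\bar v),\ldots,\frob^{m-1}(\bar v)$ together with $B$ being alternating (so $B(u,u)=0$) is what gives total isotropy, and the fact that the first $\dim W_{\bar v}$ Frobenius iterates are linearly independent is the standard ``minimal polynomial of Frobenius'' argument.
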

\begin{proof} It is clear that 
$g\V(y_1,\ldots,y_m)\subset \V(\xi_1,\ldots,\xi_{m})$ for $g\in\orth{2m+1}{q}$. Suppose $v\in \V(\xi_1,\ldots,\xi_{m})$. We will show that
$gv\in \V(y_1,\ldots,y_m)$ for some $g\in\orth{2m+1}{q}$.

The proof is by induction on $m$. For $m=1$, we have 
$$\xi_1=x_1y_1(y_1^{q-1}+x_1^{q-1})=y_1\prod_{c\in\field_q}(x_1+cy_1).$$
It follows from Lemma~\ref{terminal_var} that $\orth{3}{q}$ acts 
on $\Span_{\field_q}\{y_1,x_1\}$ as $\syp{2}{q}=\SL{2}{q}$.
Therefore, if $\xi_1(v)=0$ and $y_1(v)\not=0$, there exists $g\in\orth{3}{q}$
such that $0=y_1g(v)=y_1(gv)$.

For $m>1$, define $\widetilde{v}:=v-y_m(v)\basise_m-x_m(v)\basisf_m$, let
$\widetilde{\xi_i}$ denote the restriction of $\xi_i$ to 
${\rm Span}_{\field}\{\basise_1,\ldots,\basise_{m-1},\omega,\basisf_{m-1},\ldots \basisf_1\}$  and
identify $\orth{2m-1}{q}$ with the point-wise stabiliser of  
${\rm Span}_{\field}\{\basise_m,\basisf_m\}$ in $\orth{2m+1}{q}$.

If $y_m(v)=0$ then $\widetilde{\xi_i}(\widetilde{v})=0$ for $i=1,\ldots,m$. By induction, there is an element $g\in\orth{2m-1}{q}<\orth{2m+1}{q}$
with $g\widetilde{v}\in 
{\rm Span}_{\field}\{\omega,\basisf_1,\ldots,\basisf_{m-1}\}$. 
Therefore $gv\in {\rm Span}_{\field}\{\omega,\basisf_1,\ldots,\basisf_m\}
=\V(y_1,\ldots,y_m)$.

Suppose then that $y_m(v)\not=0$.  
Note that $\V(\xi_1,\ldots,\xi_{m})$ is closed under scalar 
multiplication since the $\xi_i$ are homogeneous.  
Similarly each 
component $g\V(y_1,\ldots,y_m)$ is also closed under scalar
multiplication.  Hence we may scale $v$ so that $y_m(v)=1$.
Then we define $w:=v-\frob(v)$. Note that $y_m(w)=0$.
Since $$w=(z(v)-z(v)^q)\omega
+\sum_{j=1}^m\left( (y_j(v)-y_j(v)^q)\basise_j+(x_j(v)-x_j(v)^q)\basisf_j\right)$$
we have
$$\xi_i(w)=\sum_{j=1}^m (x_j(v)-x_j(v)^q)(y_j(v)-y_j(v)^q)^{q^i}
      +(x_j(v)-x_j(v)^q)^{q^i}(y_j(v)-y_j(v)^q).$$
This gives 
\begin{align*}
\xi_1(w)&=\xi_1(v)+\xi_{2}(v)+\xi_1(v)^q\quad
\text{and}\\
\xi_i(w)&=\xi_i(v)+\xi_{i+1}(v)+\xi_{i-1}(v)^q+\xi_i(v)^q
\quad\text{for }i>1.\\
\end{align*}
Therefore $\xi_i(w)=0$ for $i=1,\ldots,m-1$.
Since $y_m(w)=0$, we have $\widetilde{\xi_i}(\widetilde{w})=0$ for $i=1,\ldots, m-1$ and so by induction
there is an element $g\in\orth{2m-1}{q}<\orth{2m+1}{q}$
with $g\widetilde{w}\in 
{\rm Span}_{\field}\{\omega,\basisf_1,\ldots,\basisf_{m-1}\}$ and 
$$gw\in {\rm Span}_{\field}\{\omega,\basisf_1,\ldots,\basisf_m\}
=\V(y_1,\ldots,y_m).$$
Hence $(y_j-y_j^q)(gv)=y_j(gw)=0$ for $j=1,\ldots,m$ and thus $y_j(gv)\in\field_q$.
Since $g$ fixes ${\rm Span}_{\field}\{\basise_m,\basisf_m\}$, we have $y_m(gv)=1$.
For convenience, define $c_j=y_j(gv)\in\field_q$ for $j=1,\ldots,m-1$ and let $h$ denote the linear transformation
given by $zh=z$, $x_jh=x_j$ for $j=1,\ldots,m-1$, and 
$x_mh=x_m+\sum_{j=1}^{m-1} c_jx_j$, and 
$y_jh=x_j-c_jy_m$ for $j=1,\ldots,m-1$
and $y_mh=y_m$. 
Observe that $h\in\orth{2m+1}{q}$
and  $$(x_m^q-x_m)(hgv)=(x_m^q-x_m)(gv)+\sum_{j=1}^{m-1} c_j(x_j^q-x_j)(gv).$$
Since  $\xi_1(gv)=0$, using the definition of $c_j$, and putting $c_m=y_m(gv)=1$ 
we have $$
(x_m^q-x_m)(hgv)
=\sum_{j=1}^m c_j^q x_j^q (gv) - \sum_{j=1}^m c_j x_j (gv)
=\xi_1^q(gv) - \xi_1(gv) = 0.$$
Therefore $c:=x_m(hgv)\in\field_q$. 

Since $y_m(hgv)=1$, we have $(cy_m+x_m)(hgv)=0$.
Define $\alpha\in\orth{2m+1}{q}$ by $y_m\alpha=cy_m+x_m$,
$x_m\alpha=y_m$, $z\alpha=z+\sqrt{c}y_m$ and, for $j<m$, 
$y_j\alpha=y_j$ and $x_j\alpha=x_j$.
Then $y_m(\alpha h g v)=0$ and we can apply the induction argument as above.
\end {proof}

\section{Orthogonal Invariants}\label{orth_invs}

In this section we introduce the orthogonal invariants $e_i$.
For a monomial $\beta\in S_m$, we define the {\it support} of $\beta$ to be the number of hyperbolic pairs appearing in $\beta$,
i.e., the support of 
$\beta$ is $|\{i : x_i \text{ divides }\beta \text{ or } y_i \text{ divides }\beta\}|$.
The support of $\beta$ is at most $m$. Every term in $\xi_i$ is a monomial with support $1$.

We extend the definition of natural monomial to 
$R_{i}[\overline{\xi}_0]$ by also requiring the base $q$
expansion of the exponent on $\overline{\xi}_0$ to only 
involve $0$ and $1$.
Each term in $\sigma(x_1 x_2 x_3^q\cdots x_m^{q^{m-2}}y_m^{q^{m-1}}\cdots y_1^{q^{2m-2}})$ appears in a unique natural monomial in 
$R_{2m-2}[\overline{\xi}_0]$.
For example, when $m=2$, $x_1x_2y_2^qy_1^{q^2}$ is a term in $\xi_2\xi_1$ and $x_1y_1x_2^qy_2^{q^2}$ appears in $\overline{\xi}_0\xi_1^q$.
For these natural monomials, the terms with support $m$ correspond to terms in  $\sigma(x_1 x_2 x_3^q\cdots x_m^{q^{m-2}}y_m^{q^{m-1}}\cdots y_1^{q^{2m-2}})$.

\begin{definition}\label{delta_one_def} Let $\delta_{jk}$ denote the sum of the natural monomials of degree $q+q^2+\cdots +q^{2m-2}-q^j-q^k$ and natural degree $m-2$ in $R_{2m-2}$.
Define
$$\delta_1:=\sum_{0<j<k<2m-1}\xi_j\xi_k\delta_{jk}.$$
Note that for $m=1$, we have $\delta_1=0$.
\end{definition}

\begin{remark}\label{delta_one_rem} Arguing as in the proof of Lemma~\ref{nat_mon_lem}, $\delta_{jk}$ is the sum of the natural monomials associated to partitions
of $\{q,q^2,\ldots,q^{2m-2}\}\setminus\{q^j,q^k\}$ into subsets of size $2$. Furthermore,
it follows from part (ii) of Lemma~\ref{nat_mon_lem} and 
Remark~\ref{rem_u_m-1} that $\xi_j\delta_{jk}=\overline{u_{m-1}d_{2m-2-k}}$
and $\xi_k\delta_{jk}=\overline{u_{m-1}d_{2m-2-j}}$.
\end{remark}

\begin{lemma} \label{ns_sig_lem} 
$\sigma(x_1 x_2 x_3^q\cdots x_m^{q^{m-2}}y_m^{q^{m-1}}\cdots y_1^{q^{2m-2}})=\overline{\xi}_0u_{m-1}^q+\ns(\delta_1)$.
\end{lemma}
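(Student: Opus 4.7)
The plan is to identify $\Lambda := \overline{\xi}_0 u_{m-1}^q + \delta_1$ with the sum of all natural monomials in $R_{2m-2}[\overline{\xi}_0]$ of natural degree $m$ and degree $2+q+q^2+\cdots+q^{2m-2}$, and then to compute $\ns(\Lambda)$. By Lemma~\ref{nat_mon_lem}(i) applied to $u_{m-1}$ and raised to the $q$-th power, $u_{m-1}^q$ is the sum of natural monomials in $R_{2m-2}$ of natural degree $m-1$ and degree $q+q^2+\cdots+q^{2m-2}$, all of whose natural factors have the form $\xi_\gamma^{q^\alpha}$ with $\alpha \ge 1$. Each natural monomial of the target degree in $R_{2m-2}[\overline{\xi}_0]$ corresponds to a pairing of the multiset $\{1,1,q,q^2,\ldots,q^{2m-2}\}$; those pairings matching the two $1$'s together account for $\overline{\xi}_0 u_{m-1}^q$, while those matching each $1$ with a distinct $q^j$ give the summands $\xi_j\xi_k\delta_{jk}$ by Remark~\ref{delta_one_rem} and combine to $\delta_1$. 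Arguing as in the proof of Lemma~\ref{nat_mon_lem}(i), the support-$m$ terms in the expansion of $\Lambda$ correspond bijectively and with multiplicity one to the monomials of $\sigma(x_1 x_2 x_3^q \cdots x_m^{q^{m-2}} y_m^{q^{m-1}}\cdots y_1^{q^{2m-2}})$.

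Next I would observe that every term of $\overline{\xi}_0 u_{m-1}^q$ is a non-square: the factor $\overline{\xi}_0$ places the odd exponent $1$ on both $x_{k^*}$ and $y_{k^*}$ at its assigned hyperbolic pair $k^*$, whereas each natural factor $\xi_\gamma^{q^\alpha}$ in $u_{m-1}^q$ satisfies $\alpha\ge 1$ and is therefore a $q$-th power in $S_m$, hence a square, contributing only even exponents. Since $\ns$ is $\field_2$-linear, this yields $\ns(\Lambda) = \overline{\xi}_0 u_{m-1}^q + \ns(\delta_1)$, and it remains to establish $\sigma(x_1 x_2 x_3^q \cdots x_m^{q^{m-2}} y_m^{q^{m-1}}\cdots y_1^{q^{2m-2}}) = \ns(\Lambda)$. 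Every support-$m$ term of $\Lambda$ has the odd exponent $1$ on two variables and is therefore a non-square; by the support-$m$ bijection above, their sum equals $\sigma$, giving one inclusion.

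For the reverse inclusion I would show that every non-square lower-support monomial $\tau$ occurs in $\Lambda$ with multiplicity zero modulo $2$, via an involution on the set of configurations (natural monomial, hyperbolic-pair assignment, sign choice) producing $\tau$. A non-square term has exactly two odd contributions, coming either from a single $\overline{\xi}_0$ in a summand of $\overline{\xi}_0 u_{m-1}^q$ (putting both $1$'s at the same pair $k^*$) or from the two factors $\xi_j,\xi_k$ in a summand of $\delta_1$. The key local identity is that at any fixed pair the non-square contributions $(1+q^\alpha,1+q^{\alpha+\gamma})$ and $(1+q^{\alpha+\gamma},1+q^\alpha)$ produced by $\overline{\xi}_0\cdot\xi_\gamma^{q^\alpha}$ agree with those produced by $\xi_\alpha\cdot\xi_{\alpha+\gamma}$ with opposite signs; combined with the bijection between pairings of $\{q,q^2,\ldots,q^{2m-2}\}$ containing the block $\{q^\alpha,q^{\alpha+\gamma}\}$ and natural monomials of $\delta_{\alpha,\alpha+\gamma}$, this matches configurations from $\overline{\xi}_0 u_{m-1}^q$ with configurations from $\delta_1$. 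The main obstacle is the book-keeping when several natural factors collide at $k^*$ or when the two odd contributions of $\tau$ lie at different hyperbolic pairs (the ``Type~II'' case, which never arises from $\overline{\xi}_0 u_{m-1}^q$): the local swap then produces a many-to-one rather than a one-to-one correspondence, and closing the argument requires refining the orbit structure so that the resulting orbit sizes---ultimately factorials $k!$ for $k\ge 2$---are even in characteristic two.
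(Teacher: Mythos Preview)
Your approach matches the paper's: both identify $\Lambda:=\overline{\xi}_0u_{m-1}^q+\delta_1$ with the sum of all natural monomials in $R_{2m-2}[\overline{\xi}_0]$ of degree $2+q+\cdots+q^{2m-2}$ and natural degree $m$, note that every term of $\overline{\xi}_0u_{m-1}^q$ is already a non-square so that $\ns(\Lambda)=\overline{\xi}_0u_{m-1}^q+\ns(\delta_1)$, show that the support-$m$ terms of $\Lambda$ recover $\sigma$ exactly, and then argue that every non-square term of lower support occurs with even multiplicity in $\Lambda$.

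The only substantive difference is in how that last cancellation is organised. You try to build an explicit involution on configurations, separating contributions from $\overline{\xi}_0u_{m-1}^q$ versus $\delta_1$ via your ``local identity'' and singling out a ``Type~II'' case; this is what produces the book-keeping you acknowledge is unfinished. The paper avoids this split entirely: it fixes a non-square term $\tau$ of support less than $m$ and simply counts how many natural monomials of $\Lambda$ contain it. At a hyperbolic pair where natural factors collide, the exponent data there admits more than one decomposition into natural factors, and the resulting count is even (exactly $2$ when the support is $m-1$ and the two colliding factors are distinct; for smaller support the paper is content to say ``an even number''). Because this count does not care whether the degree-$2$ contribution comes from $\overline{\xi}_0$ or from a product $\xi_j\xi_k$, your Type~I/Type~II dichotomy and the factorial heuristic at the end become unnecessary. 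Replacing your involution by this direct multiplicity count closes the gap you flag in your final paragraph and yields the paper's argument verbatim.
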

\begin{proof} For $m=1$, we have $\sigma(x_1y_1)=\overline{\xi}_0$
with $u_{m-1}=u_0=1$ and $\delta_1=0$. Suppose $m>1$.
Each term in $F:=\sigma(x_1 x_2 x_3^q\cdots x_m^{q^{m-2}}y_m^{q^{m-1}}\cdots y_1^{q^{2m-2}})$ 
appears in a unique natural monomial in $R_{2m-2}[\overline{\xi}_0]$.
To see this, note that term $$x_1^{\alpha(1)}x_2^{\alpha(2)}\cdots x_m^{\alpha(m)}y_m^{\alpha(m+1)}
\cdots y_1^{\alpha(2m)}$$ falls into one of two cases.
Either $\{\alpha(j),\alpha(2m-j+1)\}=\{1\}$ for some $j$
or $\{\alpha(j),\alpha(2m-j+1)\}\cap \{\alpha(k),\alpha(2m-k+1)\}=\{1\}$
for some $j<k$. In the first case, the associated natural monomial
is of the form $\overline{\xi}_0\beta^q$ where $\beta$ is a
natural monomial of degree $1+q+\ldots q^{2m-3}$ and natural degree $m-1$
(see part (i) of Lemma~\ref{nat_mon_lem}).
In the second case, the associated natural monomial is of the form 
$\xi_j\xi_k\beta$ where $\beta$ is a natural monomial of
of degree $q+q^2+\cdots +q^{2m-2}-q^j-q^k$ and natural degree $m-2$
(see Definition~\ref{delta_one_def} and Remark~\ref{delta_one_rem}).

Consider a natural monomial $\beta$ associated to one of the terms of $F$. The terms of $\beta$ with support $m$ are the terms appearing in $F$.
For the terms of $\beta$ with support $m-1$, if the factors associated to the duplicate hyperbolic pair are distinct, then the term appears in precisely two of the natural monomials.
Otherwise, we have terms like $x_1^2y_1^{q^2+q}\beta^q$. Thus terms of support $m-1$ don't contribute to $F=\ns(F)$.
For terms of support less than $m-1$, there are various cases but either the term is a square or appears in an even number of natural monomials. 
Therefore $F$ is the non-square part of 
$\overline{\xi}_0u_{m-1}^q+\delta_1$. 
Since $\ns(\overline{\xi}_0u_{m-1}^q)=\overline{\xi}_0u_{m-1}^q$,
the result follows.
\end{proof}

Define $e_1=u_m^{q/2-1}u_{m-1}^{q/2}z+b_1$  with $b_1$ determined by $$b_1^2=d_{1,m}+u_m^{q-2}(\delta_1+\overline{\xi}_0 u_{m-1}^q).$$

\begin{theorem} $e_1\in S[z]^{\orth{2m+1}{q}}$ and $\lt(e_1)=y_1^{q^{2m-1}(q-1)/2}$.
\end{theorem}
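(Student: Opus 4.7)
The plan is to apply \lemref{inv-cons-lem} with $G=\orth{2m+1}{q}$ and the candidate $a:=u_m^{q/2-1}u_{m-1}^{q/2}$. Set $F:=d_{1,m}+u_m^{q-2}\delta_1$ and observe that $b_1^2=F+a^2\overline{\xi}_0$, because $a^2=u_m^{q-2}u_{m-1}^q$. Thus, once we verify $F\in S^G$ and $\ns(F)=a^2\overline{\xi}_0$, \lemref{inv-cons-lem} directly yields $e_1=az+b_1\in S[z]^G$. Invariance of $F$ is immediate: $d_{1,m}$, $u_m$, and $\delta_1$ are all symplectic invariants, and the restriction of $\orth{2m+1}{q}$ to $S$ factors through $\syp{2m}{q}$.

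The heart of the invariance step is the non-square computation. Since $q=2^s\geq 4$, the exponent $q-2$ is even, so $u_m^{q-2}$ is a square and may be pulled outside of $\ns$. \lemref{ns_di_lem} applied to $d_{1,m}=\widetilde{d}_{1,2m}$ gives $\ns(d_{1,m})=u_m^{q-2}\sigma_0$, where $\sigma_0$ denotes the monomial symmetric function in the $2m$ variables $y_1,\ldots,y_m,x_m,\ldots,x_1$ with exponent multiset $\{1,1,q,q^2,\ldots,q^{2m-2}\}$. \lemref{ns_sig_lem} gives $\ns(\delta_1)=\sigma_0+\overline{\xi}_0 u_{m-1}^q$, since the orbit sum $\sigma(x_1x_2x_3^q\cdots x_m^{q^{m-2}}y_m^{q^{m-1}}\cdots y_1^{q^{2m-2}})$ there is the same polynomial $\sigma_0$, merely written in a different labelling of the same $2m$ variables. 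Combining,
$$\ns(F)=u_m^{q-2}\sigma_0+u_m^{q-2}(\sigma_0+\overline{\xi}_0 u_{m-1}^q)=u_m^{q-2}u_{m-1}^q\overline{\xi}_0=a^2\overline{\xi}_0$$
in characteristic~$2$, as required.

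For the leading term, work in the graded lex order induced by $y_1>\cdots>y_m>z>x_m>\cdots>x_1$. A short induction on $n$ using the recurrence $\widetilde{d}_{1,n}=\widetilde{d}_{1,n-1}^{\,q}+N(x_n)^{q-1}$ shows $\lt(d_{1,m})=y_1^{q^{2m-1}(q-1)}$, a perfect square. The other two summands of $b_1^2$ each have leading terms of strictly smaller $y_1$-degree: using the natural-monomial description of $\delta_1$, the $y_1$-degree of $\lt(u_m^{q-2}\delta_1)$ is bounded by $(q-2)q^{2m-1}+(q+q^2+\cdots+q^{2m-2})$, while the $y_1$-degree of $\lt(u_m^{q-2}\overline{\xi}_0 u_{m-1}^q)$ equals $(q-2)q^{2m-1}+q^{2m-2}+1$; both are strictly less than $(q-1)q^{2m-1}$. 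Hence $\lt(b_1^2)=y_1^{q^{2m-1}(q-1)}$, and extracting the square root gives $\lt(b_1)=y_1^{q^{2m-1}(q-1)/2}$. An analogous $y_1$-degree comparison shows $\lt(az)$ has strictly smaller $y_1$-degree than $\lt(b_1)$, so $\lt(e_1)=\lt(b_1)=y_1^{q^{2m-1}(q-1)/2}$.

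The main obstacle is the identification of the two symmetric functions that I have labelled $\sigma_0$: \lemref{ns_di_lem} expresses the non-square part of a Dickson invariant in generic Dickson coordinates, whereas \lemref{ns_sig_lem} is phrased in the hyperbolic-basis labelling adapted to the quadratic form. Recognising that these two orbit sums are literally the same monomial symmetric function on the $2m$ variables, and that the parasitic term $\overline{\xi}_0 u_{m-1}^q$ in $\ns(\delta_1)$ is precisely what combines with $u_m^{q-2}$ to produce $a^2\overline{\xi}_0$, is the bookkeeping that drives the argument; once this matching is in place both assertions of the theorem follow cleanly.
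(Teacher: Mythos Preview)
Your proof is correct and follows the same approach as the paper: set $F=d_{1,m}+u_m^{q-2}\delta_1$, compute $\ns(F)$ by combining Lemmas~\ref{ns_di_lem} and~\ref{ns_sig_lem} (recognising that the two orbit sums agree), and then invoke Lemma~\ref{inv-cons-lem}. The paper asserts the leading-term claim without detail, whereas you supply an explicit $y_1$-degree comparison; one small caveat is that your stated value $(q-2)q^{2m-1}+q^{2m-2}+1$ for the $y_1$-degree of $u_m^{q-2}\overline{\xi}_0 u_{m-1}^q$ implicitly assumes the $y_1$-degree of $u_{m-1}$ (as an element of $R_{2m-3}\subset S_m$) is $q^{2m-3}$, which requires a cancellation argument for $m\geq 3$---but the cruder bound via total degree, namely $(q-2)q^{2m-1}+1+q\deg(u_{m-1})<(q-1)q^{2m-1}$, already suffices and keeps the conclusion intact.
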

\begin{proof} First observe that $\lt(e_1)=y_1^{q^{2m-1}(q-1)/2}$ using either the lex or grevlex orders.
Taking $F=d_{1,m}+u_m^{q-2}\delta_1\in S^{\syp{2m}{q}}$, we have $\ns(F)=\ns(d_{1,m})+u_m^{q-2}\ns(\delta_1)$.
Using Lemmas~\ref{ns_di_lem} and \ref{ns_sig_lem} gives $\ns(F)=u_m^{q-2}u_{m-1}^q\overline{\xi}_0$.
Thus taking $b_1^2=F+\ns(F)=d_{1,m}+u_m^{q-2}\delta_1+u_m^{q-2}u_{m-1}^q\overline{\xi}_0$ and applying Lemma~\ref{inv-cons-lem}
gives an element $f=a_1z+b_1\in S[z]^{\orth{2m+1}{q}}$ with $a_1=u_m^{q/2-1}u_{m-1}^{q/2}$.
\end{proof}

For $0<i<2m-1$, define $e_{i+1}:=\PP^{q^{2m-1-i}/2}(e_i)$ and $\delta_{i+1}:=\PP^{q^{2m-1-i}}(\delta_i)$.

\begin{lemma} \label{ei-lem} For $1<i<2m$, $e_i=u_m^{q/2-1}\overline{u_{m-1}d_{i-1}}^{q/2}z+b_i$ with
$$b_i^2=d_{i,m}+u_m^{q-2}(\delta_i+\overline{\xi}_0\overline{u_{m-1}d_{i-1}}^q).$$
\end{lemma}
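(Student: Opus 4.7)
The plan is to induct on $i$, adopting the convention $\overline{u_{m-1}d_0}:=u_{m-1}$ so that the preceding theorem supplies the base case $i=1$. For the inductive step, write $e_i=a_iz+b_i$ with $a_i=u_m^{q/2-1}\overline{u_{m-1}d_{i-1}}^{q/2}$ and $b_i^2$ as claimed, and set $k:=q^{2m-1-i}/2$; we must show that $e_{i+1}=\PP^k(e_i)$ has the analogous form at index $i+1$. Since $\PP^0(z)=z$, $\PP^1(z)=z^q$, and $\PP^j(z)=0$ for $j\geq 2$, the Cartan identity gives $\PP^k(a_iz)=\PP^k(a_i)\,z+\PP^{k-1}(a_i)\,z^q$, so the computation splits into three pieces: $\PP^k(a_i)$, $\PP^{k-1}(a_i)$ and $\PP^k(b_i)$.

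To evaluate $\PP^j(a_i)$ for $j\in\{k-1,k\}$, observe that Lemma~\ref{stn_un} together with the Cartan identity yields $\PP^\ell(u_m^{q/2-1})=0$ for $0<\ell<q^{2m-1}$; since $k<q^{2m-1}$, this gives $\PP^j(a_i)=u_m^{q/2-1}\PP^j(\overline{u_{m-1}d_{i-1}}^{q/2})$. In characteristic two the $(q/2)$-th power is a Frobenius, so applying it to $\PP(t)(f)$ yields $\PP(t)(f^{q/2})=\sum_j\PP^j(f)^{q/2}t^{jq/2}$; hence $\PP^\ell(f^{q/2})$ vanishes unless $q/2$ divides $\ell$, in which case it equals $\PP^{\ell/(q/2)}(f)^{q/2}$. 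Writing $k=(q/2)\cdot q^{2m-2-i}$ and using the recursion from Remark~\ref{rem_u_m-1}, we obtain $\PP^k(a_i)=u_m^{q/2-1}\overline{u_{m-1}d_i}^{q/2}=a_{i+1}$, while $\PP^{k-1}(a_i)=0$ since $k-1$ is not a multiple of $q/2$ (this is where the hypothesis $q>2$ enters). Thus $\PP^k(a_iz)=a_{i+1}z$.

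For $\PP^k(b_i)$, use the characteristic-two identity $\PP^{2k}(f^2)=\PP^k(f)^2$ (Frobenius applied to $\PP(t)$) to conclude $\PP^k(b_i)^2=\PP^{q^{2m-1-i}}(b_i^2)$, and apply $\PP^{q^{2m-1-i}}$ term by term to the inductive formula for $b_i^2$. This uses four facts: the Dickson-invariant Steenrod recursion $\PP^{q^{2m-1-i}}(d_{i,m})=d_{i+1,m}$, which can be extracted from the proof of Lemma~\ref{stn_un}; the vanishing of $\PP^\ell(u_m^{q-2})$ for $0<\ell<q^{2m-1}$, which allows $u_m^{q-2}$ to be pulled outside $\PP^{q^{2m-1-i}}$; the defining recursion $\delta_{i+1}=\PP^{q^{2m-1-i}}(\delta_i)$; and the identity $\PP^{q^{2m-1-i}}(\overline{\xi}_0\,\overline{u_{m-1}d_{i-1}}^q)=\overline{\xi}_0\,\overline{u_{m-1}d_i}^q$. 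The last of these follows from Cartan after noting that $\PP^j(\overline{\xi}_0)$ is supported on $j\in\{0,1,2\}$ (derived from $\overline{\xi}_0=\xi_0+z^2$ and Corollary~\ref{com_st}) while $\PP^\ell(g^q)=0$ unless $q$ divides $\ell$; since $q\geq 4$ and $q\mid q^{2m-1-i}$ for $i\leq 2m-2$, only the $j=0$ summand survives, and the recursion of Remark~\ref{rem_u_m-1} produces $\overline{u_{m-1}d_i}^q$.

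Combining these computations gives $b_{i+1}:=\PP^k(b_i)\in S$ with $b_{i+1}^2=d_{i+1,m}+u_m^{q-2}(\delta_{i+1}+\overline{\xi}_0\,\overline{u_{m-1}d_i}^q)$ and $e_{i+1}=a_{i+1}z+b_{i+1}$, completing the induction. The main obstacle is bookkeeping: at each step a potentially long Cartan sum of products of Steenrod operations must collapse to a single nonzero summand, and it is precisely this collapse that forces the hypothesis $q>2$, through the two inequalities $q/2>1$ and $q\geq 4$.
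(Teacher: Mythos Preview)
Your proof is correct and follows the same inductive strategy as the paper's proof: apply $\PP^{q^{2m-1-i}/2}$ to $e_i=a_iz+b_i$, use Lemma~\ref{stn_un} to push the Steenrod operation past $u_m^{q/2-1}$, use Remark~\ref{rem_u_m-1} for the recursion on $\overline{u_{m-1}d_{i-1}}$, and read off $b_{i+1}^2$ from $\PP^{q^{2m-1-i}}(b_i^2)$. The paper's proof is considerably terser and leaves implicit precisely the steps you spell out --- the vanishing of the $z^q$ cross-term, the collapse of the Cartan sum for $\overline{\xi}_0\,\overline{u_{m-1}d_{i-1}}^q$, and where $q>2$ enters --- so your write-up is effectively a careful unpacking of the same argument. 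One minor quibble: the identity $\PP^{q^{2m-1-i}}(d_{i,m})=d_{i+1,m}$ is not really extractable from Lemma~\ref{stn_un} (which computes Steenrod operations on $u_m$, not on the $d_{i,m}$); it is the standard Wilkerson formula for the Steenrod action on Dickson invariants, which the paper also invokes without proof.
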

\begin{proof} Using Lemma~\ref{stn_un}, $\PP^k(u_m)=0$ for $0<k<q^{2m-1}$. 
Using the definition of $\overline{u_{m-1}d_{i-1}}$ (see Remark~\ref{rem_u_m-1}), $\PP^{q^{2m-i-1}}(\overline{u_{m-1}d_{i-2}})=\overline{u_{m-1}d_{i-1}}$.
Hence $e_i=a_iz+b_i$ where 
$a_i=\PP^{q^{2m-i}/2}(a_{i-1})=u_m^{q/2-1}\overline{u_{m-1}d_{i-1}}^{q/2}$ and $b_i=\PP^{q^{2m-i}/2}(b_{i-1})$.
 Furthermore,
using the action of the Steenrod algebra on the Dickson invariants and the definition of $\delta_i$, we have 
$b_i^2=d_{i,m}+u_m^{q-2}(\delta_i+\overline{\xi}_0\overline{u_{m-1}d_{i-1}}^q)$.
\end{proof}

\begin{definition} \label{delta_gen_def}For $0<i<2m$, $0<j<k<2m$ and $i\not\in\{2m-j,2m-k\}$, 
let $\delta_{jk}^{(i)}$ denote the sum of the natural monomials of
degree $q+q^2+\cdots +q^{2m-1}-q^{2m-i}-q^j-q^k$ and natural degree $m-2$
in $R_{2m-1}$.
Note that $\delta_{jk}^{(1)}=\delta_{jk}$.
\end{definition}

\begin{remark}\label{delta_gen_rem} Arguing as in the proof of Lemma~\ref{nat_mon_lem}, $\delta_{jk}^{(i)}$ is the sum of the natural monomials associated to partitions
of $\{q,q^2,\ldots,q^{2m-1}\}\setminus\{q^j,q^k,q^{2m-i}\}$ into subsets of size $2$. 
\end{remark}

\begin{lemma}\label{delta_gen_lem}
    For $q>2$ and $0<i<2m$,
    $$\delta_i=\sum\{\xi_j\xi_k\delta_{jk}^{(i)}\mid 
    0<j<k<2m,\, j\not=2m-i,\, k\not=2m-i\}.$$
\end{lemma}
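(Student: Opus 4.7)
The plan is to argue by induction on $i$. The base case $i=1$ follows directly from Definition~\ref{delta_one_def} together with $\delta_{jk}^{(1)}=\delta_{jk}$: the index set $\{(j,k):0<j<k<2m,\ j,k\neq 2m-1\}$ is exactly the range $\{(j,k):0<j<k<2m-1\}$ of Definition~\ref{delta_one_def}. For the inductive step I expand $\delta_{i+1}=\PP^{q^{2m-1-i}}(\delta_i)$ by applying the Cartan identity to each summand $\xi_j\xi_k\delta_{jk}^{(i)}$:
\[
\PP^{q^{2m-1-i}}(\xi_j\xi_k\delta_{jk}^{(i)})=\sum_{a+b+c=q^{2m-1-i}}\PP^a(\xi_j)\PP^b(\xi_k)\PP^c(\delta_{jk}^{(i)}).
\]

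The key step --- and the essential use of the hypothesis $q>2$ --- is a base-$q$ digit analysis. By Corollary~\ref{com_st}, $\PP^a(\xi_j)$ and $\PP^b(\xi_k)$ are nonzero only for $a,b$ whose base-$q$ digits lie in $\{0,1\}$; applying Cartan to each natural monomial of $\delta_{jk}^{(i)}$ together with the earlier lemma that $\PP^i(f^q)$ vanishes unless $q\mid i$ shows that $\PP^c(\delta_{jk}^{(i)})\neq 0$ likewise forces $c$ to have base-$q$ digits in $\{0,1\}$ (the pair-endpoints within a natural monomial being distinct, there is no overlap). Since $q\geq 4$, a sum of three such integers admits no carries, so $a+b+c=q^{2m-1-i}$ forces digit-sums to match position-by-position: exactly one of $a,b,c$ equals $q^{2m-1-i}$ and the others vanish.

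I then dispatch the three cases using Corollary~\ref{com_st}. If $a=q^{2m-1-i}$, then $\PP^a(\xi_j)\neq 0$ forces $j=2m-1-i$, and the contribution is $\xi_{2m-i}\xi_k\delta_{2m-1-i,k}^{(i)}$; the symmetric case $b=q^{2m-1-i}$ forces $k=2m-1-i$ and gives $\xi_j\xi_{2m-i}\delta_{j,2m-1-i}^{(i)}$. The case $c=q^{2m-1-i}$ is nontrivial only when some natural factor of $\delta_{jk}^{(i)}$ has a pair-endpoint $q^{2m-1-i}$, equivalently $j,k\neq 2m-1-i$; then the Steenrod action shifts that unique endpoint from $q^{2m-1-i}$ to $q^{2m-i}$, establishing a bijection between pair-partitions of $\{q,\ldots,q^{2m-1}\}\setminus\{q^j,q^k,q^{2m-i}\}$ and those of $\{q,\ldots,q^{2m-1}\}\setminus\{q^j,q^k,q^{2m-1-i}\}$. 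Since Definition~\ref{delta_gen_def} characterises each $\delta^{(i')}_{j'k'}$ as the sum of all natural monomials in $R_{2m-1}$ of a prescribed degree and natural degree, equality of polynomials reduces to equality of excluded-index multisets, yielding $\delta_{2m-1-i,k}^{(i)}=\delta_{2m-i,k}^{(i+1)}$, $\delta_{j,2m-1-i}^{(i)}=\delta_{j,2m-i}^{(i+1)}$, and $\PP^{q^{2m-1-i}}(\delta_{jk}^{(i)})=\delta_{jk}^{(i+1)}$ in Case~C.

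Tabulating the three contributions by output indices $(j',k')$: Case~A produces the summands with $j'=2m-i$, Case~B those with $k'=2m-i$, and Case~C those with $j',k'\notin\{2m-i,2m-1-i\}$. These are mutually exclusive and together exhaust precisely the pairs $(j',k')$ with $0<j'<k'<2m$ and $j',k'\neq 2m-(i+1)$, each occurring exactly once, completing the induction. The main obstacle is the base-$q$ digit analysis: it is precisely the use of $q>2$ that rules out all decompositions of $q^{2m-1-i}=a+b+c$ other than the three above, and for $q=2$ identities such as $q^a+q^a=q^{a+1}$ permit carries and many further cases, so the argument does not extend.
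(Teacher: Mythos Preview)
Your proof is correct and follows essentially the same approach as the paper: induction on $i$, with the inductive step driven by a base-$q$ digit analysis showing that, since $q>2$, the Steenrod operation $\PP^{q^{2m-1-i}}$ hits exactly one factor. The only organisational difference is that the paper applies the Cartan identity at the level of individual natural monomials $\beta$ (locating the unique natural factor whose head or tail equals $q^{2m-i}$), whereas you apply Cartan to the triple product $\xi_j\cdot\xi_k\cdot\delta_{jk}^{(i)}$ and then do a three-case split; the underlying mechanism and the use of $q>2$ are identical.
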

\begin{proof} 
 The proof is by induction on $i$.
The case $i=1$ is Definition~\ref{delta_one_def}. 
By induction, we assume the result is true for $\delta_{i-1}$ with $i>1$.
Therefore $\delta_{i-1}$ is a sum of natural monomials with natural degree $m$. Suppose $\beta$ is one of these natural monomials.
It follows from Corollary~\ref{com_st} that $\PP^{q^{2m-i}}(\beta)$ is a
sum of terms each consisting of a product of $m$ natural factors.
Since $q>2$, the base $q$ digit sum of the degree of $\PP^{q^{2m-i}}(\beta)$
is $2m$. Therefore computing the degree by summing the degrees of $m$ natural factors is computed without carries. Thus the natural factors are distinct and $\delta_i=\PP^{q^{2m-i}}(\delta_{i-1})$ is a sum of natural monomials of natural degree $m$. Furthermore, using the base $q$ expansion of $\deg(\delta_i)$, each of these natural monomials is of the form $\xi_j\xi_k\alpha$ for
some $j$, $k$ and $\alpha$, where $\alpha$ is a natural monomial appearing in $\delta_{jk}^{(i)}$. To complete the proof, we need to show that each natural monomial of this form appears precisely once in
$\PP^{q^{2m-i}}(\delta_{i-1})$. 

 Recall that for a natural monomial $\beta$,  $\xi_\ell^{q^r}$ is a natural factor of  $\beta$ if $\beta/\xi_\ell^{q^r}$ is a natural monomial.
Note the degree of $\xi_\ell^{q^r}$ is $q^{r+\ell}+q^r$. We will refer to $q^{r+\ell}$ as the {\it head} of $\xi_\ell^{q^r}$ and $q^r$ as the {\it tail}. 
If $\beta$ is a natural monomial appearing in 
$\delta_{i-1}$, then $\beta$ has a natural factor, say $\xi_\ell^{q^r}$, such $q^{2m-i}$ is either the head or the tail of $\xi_\ell^{q^r}$. Write $\beta=\widetilde{\beta}\xi_\ell^{q^r}$.
It easy to see that $\widetilde{\beta}\PP^{q^{2m-i}}(\xi_\ell^{q^r})$ is a natural monomial of degree $2+q+\cdots +q^{2m-1}-q^{2m-i}$.
To prove the result, it is sufficient to show that $\PP^{q^{2m-i}}(\beta)=\widetilde{\beta}\PP^{q^{2m-i}}(\xi_\ell^{q^r})$.
We use the Cartan identity to distribute the action of $\PP^{q^{2m-i}}$ on $\beta$. 

Using the action of the Steenrod algebra on $R_{2m-1}$
(see Corollary~\ref{com_st}), $\PP^{q^{2m-i}}$ can only non-trivially distribute on a natural factor
if the tail of the natural factor is less than $q^{2m-i}$.
Since $q>2$, we have $2+q+\cdots +q^{2m-i-1}<q^{2m-i}$ and there are no additional distributions.

Alternatively, 
the Steenrod operation $\PP^{q^{2m-i}}$ fills the ``gap" of $q^{2m-i+1}$ in $\deg(\delta_{i-1})$ producing a ``gap" of $q^{2m-i}$ in $\deg(\delta_i)$.
Since both degree calculations are computed base $q$ without carries,
$\PP^{q^{2m-i}}$ must act on the natural factor contributing $q^{2m-i}$
to $\deg(\delta_{i-1})$.
\end{proof}

For a domain $A$, we use $\FF(A)$ to denote its field of fractions.

\begin{theorem} \label{thm_FF-char2} $\FF\left(S[z]^{\orth{2m+1}{q}}\right)=\field_q( \xi_1,\ldots,\xi_{2m} ,e_1)$.
\end{theorem}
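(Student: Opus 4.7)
The plan is a degree comparison in the sense of Galois theory. The containment $\field_q(\xi_1,\ldots,\xi_{2m},e_1)\subseteq\FF(S[z]^{\orth{2m+1}{q}})$ is immediate: each $\xi_i$ for $i\geq 1$ is already symplectic-invariant on $S$, and $e_1\in S[z]^{\orth{2m+1}{q}}$ by the preceding theorem. To upgrade this to equality it suffices to check that both fields have the same index in $\FF(S[z])$. By Artin's theorem, applied to the faithful action of $\orth{2m+1}{q}$ on $\FF(S[z])$, the upper index is $|\orth{2m+1}{q}|=q^{m^2}\prod_{j=1}^m(q^{2j}-1)$; this coincides with $|\syp{2m}{q}|$, a happy coincidence of even characteristic, and is the pivot of the whole argument.

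Next I would compute the lower index via the intermediate field $\field_q(\xi_1,\ldots,\xi_{2m})$. First, $\FF(S^{\syp{2m}{q}})=\field_q(\xi_1,\ldots,\xi_{2m})$: the generators of $S^{\syp{2m}{q}}$ are the $\xi_i$ together with the Dickson invariants $d_{i,m}$, and Theorem~\ref{Oodd_det_thm} expresses each $d_{i,m}=M_m(2m+1-i)/M_m(2m+1)$ as a ratio of polynomials in $\xi_1,\ldots,\xi_{2m}$, so passing to fraction fields gives the equality. Artin's theorem applied to the faithful $\syp{2m}{q}$-action on $\FF(S)$ then yields $[\FF(S):\field_q(\xi_1,\ldots,\xi_{2m})]=|\syp{2m}{q}|$.

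To transport this degree upstairs, I would use that $e_1=a_1z+b_1$ with $a_1=u_m^{q/2-1}u_{m-1}^{q/2}\neq 0$ and $b_1\in S$. Then $z=(e_1-b_1)/a_1\in\field_q(S)(e_1)$, giving $\FF(S[z])=\field_q(S)(e_1)$, and $e_1$ is a nonconstant polynomial in $z$ over $\field_q(S)$, hence transcendental over $\field_q(S)$ and \emph{a fortiori} over $\field_q(\xi_1,\ldots,\xi_{2m})$. Adjoining a single transcendental to both fields in the extension $\field_q(\xi_1,\ldots,\xi_{2m})\subseteq\field_q(S)$ preserves the degree, so
\begin{equation*}
[\FF(S[z]):\field_q(\xi_1,\ldots,\xi_{2m},e_1)]
=[\field_q(S)(e_1):\field_q(\xi_1,\ldots,\xi_{2m})(e_1)]
=|\syp{2m}{q}|.
\end{equation*}

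Combining with $|\orth{2m+1}{q}|=|\syp{2m}{q}|$ forces $\field_q(\xi_1,\ldots,\xi_{2m},e_1)=\FF(S[z]^{\orth{2m+1}{q}})$. The one step requiring real work from the earlier sections is the identification $\FF(S^{\syp{2m}{q}})=\field_q(\xi_1,\ldots,\xi_{2m})$ via the matrix minors of Theorem~\ref{Oodd_det_thm}; the remainder is routine bookkeeping with transcendence bases, pure-transcendental base change, and Artin's theorem.
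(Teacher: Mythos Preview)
Your argument is correct and runs parallel to the paper's, but you unpack a step that the paper delegates to an external reference. The paper also begins from $\FF\bigl(S^{\syp{2m}{q}}\bigr)=\field_q(\xi_1,\ldots,\xi_{2m})$ (citing \cite[Thm~8.3.4]{benson-polyinvafinigrou:93} rather than reading it off the matrix minors of Theorem~\ref{Oodd_det_thm} as you do), and then observes via Lemma~\ref{terminal_var} that $z$ is a \emph{terminal variable}: every $g\in\orth{2m+1}{q}$ moves $z$ by an element of $S$. At that point the paper invokes a result of Campbell--Chuai \cite{campbell+chuai-local:07} on localized invariant rings to conclude that adjoining any invariant of $z$-degree one to $\field_q(\xi_1,\ldots,\xi_{2m})$ already gives the full invariant field, and $e_1$ is such an invariant. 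Your direct degree count---Artin's theorem on both sides, the characteristic-two coincidence $|\orth{2m+1}{q}|=|\syp{2m}{q}|$, and the preservation of finite degree under purely transcendental base change---is precisely what the Campbell--Chuai lemma amounts to in this situation, so you have made the black box transparent at the cost of a little extra bookkeeping. Either way the crux is the same: $e_1=a_1z+b_1$ with $a_1\neq 0$, so that $\FF(S)(e_1)=\FF(S[z])$ and the symplectic degree transports intact.
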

\begin{proof} Using \cite[Thm 8.3.4]{benson-polyinvafinigrou:93}, we have $\FF\left(S^{\syp{}{}p{2m}{q}}\right)=\field_q(\xi_1,\ldots,\xi_{2m})$.
It follows from Lemma~\ref{terminal_var} and Campbell-Chuai \cite{campbell+chuai-local:07} that to compute $\FF\left(S[z]^{\orth{2m+1}{q}}\right)$,
it is sufficient to adjoin an invariant of degree one in $z$ to $\field_q(\xi_1,\ldots,\xi_{2m})$. One suitable choice is $e_1$.
\end{proof}

\begin{remark} Since $S^{\syp{2m}{q}}[u_m^{-1}]=\field_q[\xi_1,\ldots,\xi_{2m}][u_m^{-1}]$ (see \cite[Thm 8.3.4]{benson-polyinvafinigrou:93})
and $\lc_z(e_1)=u_m^{q/2-1}u_{m-1}^{q/2}$, we have 
$$S[z]^{\orth{2m+1}{q}}[u_m^{-1},u_{m-1}^{-1}]=\field_q[\xi_1,\ldots, \xi_{2m},e_1][u_m^{-1},u_{m-1}^{-1}].$$
\end{remark}

Define $\h:=\{\xi_0,\xi_1,\ldots,\xi_m,e_1,\ldots, e_m\}$.

\begin{theorem} \label{ortho_hsop_thm_char2} $\h$ is a homogeneous system of parameters.
\end{theorem}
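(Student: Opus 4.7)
The plan is to check that $\V(\h)=\{0\}$ in $\overline{V}:=V\otimes_{\field_q}\overline{\field_q}$. Since $|\h|=2m+1=\dim S[z]$, this suffices for $\h$ to be a homogeneous system of parameters. So I would take an arbitrary $v\in\V(\xi_0,\xi_1,\ldots,\xi_m,e_1,\ldots,e_m)$ and aim to conclude $v=0$. By Theorem~\ref{thm-var-sp}, some $g\in\orth{2m+1}{q}$ sends $v$ to $v':=gv\in\V(y_1,\ldots,y_m)$. All elements of $\h$ being $\orth{2m+1}{q}$-invariant, $v'$ is still in $\V(\h)$; combining $y_j(v')=0$ for all $j$ with $\xi_0=z^2+x_1y_1+\cdots+x_my_m$ forces $z(v')=0$. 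Hence $v'=c_1\basisf_1+\cdots+c_m\basisf_m$ for some $c_i\in\overline{\field_q}$, and the task reduces to showing $c=0$.

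Next I would evaluate each $e_i$ at $v'$. By Lemma~\ref{ei-lem} (read with the convention $\overline{u_{m-1}d_0}:=u_{m-1}$ when $i=1$), $e_i=u_m^{q/2-1}\overline{u_{m-1}d_{i-1}}^{q/2}z+b_i$ with $b_i^2=d_{i,m}+u_m^{q-2}(\delta_i+\overline{\xi}_0\,\overline{u_{m-1}d_{i-1}}^q)$. The $z$-linear term vanishes at $v'$, and $\overline{\xi}_0(v')=0$ is immediate. By Lemma~\ref{nat_mon_lem}, $u_m$ is the orbit sum $\sigma(x_1x_2^q\cdots x_m^{q^{m-1}}y_m^{q^m}\cdots y_1^{q^{2m-1}})$, so every monomial in $u_m$ contains every $y_j$; hence $u_m(v')=0$. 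By Lemma~\ref{delta_gen_lem}, both $\delta_i$ and $\overline{u_{m-1}d_{i-1}}$ lie in $R_{2m-1}$, and each $\xi_j$ with $j>0$ vanishes on the locus $y_1=\cdots=y_m=0$, so these factors also vanish at $v'$. Therefore $e_i(v')^2=b_i(v')^2=d_{i,m}(v')$, and the hypothesis $e_i(v')=0$ gives $d_{i,m}(v')=0$ for $i=1,\ldots,m$.

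The final step is to extract $c=0$ from the vanishing of $d_{1,m}(v'),\ldots,d_{m,m}(v')$ using the defining product formula for the Dickson invariants of $\gl{2m}{q}$:
\[
\prod_{\phi\in(V/\rad(B))^*}(t+\phi(v'))=t^{q^{2m}}+\sum_{i=1}^{2m}d_{i,m}(v')\,t^{q^{2m-i}}.
\]
Writing $\phi=\sum a_jy_j+\sum b_jx_j$ and using $y_j(v')=0$, $x_j(v')=c_j$, the left side collapses to $\left(\prod_{b\in\field_q^m}(t+b_1c_1+\cdots+b_mc_m)\right)^{q^m}$, which is the $q^m$-th power of the $\gl{m}{q}$-Dickson polynomial evaluated at $c=(c_1,\ldots,c_m)$. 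Matching coefficients yields $d_{i,m}(v')=\widetilde{d}_{i,m}(c)^{q^m}$ for $1\le i\le m$, so $\widetilde{d}_{i,m}(c)=0$ for each such $i$. Since $\{\widetilde{d}_{1,m},\ldots,\widetilde{d}_{m,m}\}$ is the complete Dickson HSOP for $\field_q[x_1,\ldots,x_m]$, this forces $c=0$, hence $v'=0$ and $v=0$. I expect the one non-routine step to be this coefficient-matching identity that compares the $\gl{2m}{q}$-Dickson polynomial with the $q^m$-th power of its $\gl{m}{q}$ counterpart on the maximal isotropic subspace; the remainder is a direct application of Theorem~\ref{thm-var-sp} and the explicit descriptions of $e_i$ and $u_m$.
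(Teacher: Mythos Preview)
Your proposal is correct and follows essentially the same route as the paper: reduce to $\V(y_1,\ldots,y_m)$ via Theorem~\ref{thm-var-sp}, use $\xi_0$ to kill $z$, observe that modulo $\langle y_1,\ldots,y_m\rangle$ the formula for $b_i^2$ collapses to $d_{i,m}$, and then identify $d_{i,m}$ with $\widetilde{d}_{i,m}^{q^m}$ on this locus to invoke the Dickson HSOP for $\gl{m}{q}$. The only differences are cosmetic: the paper argues directly modulo the ideal $I=\langle y_1,\ldots,y_m\rangle$ (noting $u_m,\delta_i,\overline{\xi}_0\in I$ since they lie in $R_{2m-1}$ or are divisible by some $y_j$), whereas you spell out the vanishing of each piece at $v'$ and give the product-formula derivation of $d_{i,m}\equiv_I\widetilde{d}_{i,m}^{q^m}$ in full; also, your appeal to Lemma~\ref{delta_gen_lem} for $\delta_i\in R_{2m-1}$ is a slight over-citation (that lemma assumes $q>2$), but the containment already follows from the definition of $\delta_i$ via Steenrod operations applied to $\delta_1\in R_{2m-2}$ together with Corollary~\ref{com_st}.
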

\begin{proof} We will show that the variety  in $\overline{V}=\overline{\field_q}\otimes V$
cut out by the ideal generated by $\H$ is $\{\underline{0}\}$. 
Using Theorem~\ref{thm-var-sp},
$$\V( \xi_1,\ldots,\xi_{m})=\bigcup_{g\in \orth{2m+1}{q}}g\V(y_1,y_2,\ldots, y_m).$$
For $v\in \V(\xi_1,\xi_2,\ldots,\xi_m)$, choose $g\in \syp{2m}{q}$ so that $gv\in \V(y_1,y_2,\ldots,y_m)$. 
Since $e_i(gv)=e_i(v)$ and $\xi_0(gv)=\xi_0(v)$, to show that $\V(\H)=\{\underline{0}\}$, it is sufficient to show that
$\V(y_1,\ldots,y_m,\xi_0,e_1,\ldots,e_m)=\{\underline{0}\}$. To do this we work modulo the ideal $I:=\langle y_1,\ldots,y_m\rangle$.
Since $\xi_0\equiv_I z^2$ and $e_i \equiv_I b_i$, it is sufficient to show that $\V(y_1,\ldots,y_m,z,b_1,\ldots,b_m)=\{\underline{0}\}$.
Since $b_i^2\equiv_I d_{i,m}$, it is sufficient to show that $\V(y_1,\ldots,y_m,z,d_{1,m},\ldots,d_{m,m})=\{\underline{0}\}$.
Using the description of the $d_{i,m}$ as the coefficients of the polynomial $$\prod\{t+x_1g\mid g\in \gl{2m}{q}\},$$ we see that
$d_{i,m}\equiv_I(\widetilde{d}_{i,m})^{q^m}$. Since $\{\widetilde{d}_{1,m},\ldots,\widetilde{d}_{m,m}\}$ is a homogeneous system of parmeters for
$\gl{m}{q}$ acting on $\field_q[x_1,\ldots,x_m]$, $$\V(y_1,\ldots,y_m,z,d_{1,m},\ldots,d_{m,m})=\{\underline{0}\}$$ as required.
\end{proof}

\begin{example} In this example we consider the case $m=1$. Recall that the order of $\orth{3}{q}$ is $q(q^2-1)$.
Using Theorem~\ref{ortho_hsop_thm_char2}, $\{\xi_0,\xi_1,e_1\}$ is a homogeneous system of parameters.
Since $e_1=\xi_1^{q/2-1}z+b_1$, the product of the degrees of these three invariants is $2\cdot (q+1)\cdot(q^2-q)/2=q(q^2-1)$. Therefore 
$S_1[z]^{\orth{3}{q}}=\field_q[\xi_0,\xi_1,e_1]$. 
\end{example}

\begin{remark} The order of $\orth{5}{q}$ is $q^4(q^2-1)(q^4-1)$. 
The product of the degrees of the elements of $\h$ for $m=2$ is
$2\cdot (q+1)\cdot (q^2+1) \cdot q^3(q-1)/2\cdot q^2(q^2-1)/2=q^5(q^2-1)(q^4-1)/2$. The ratio of these two numbers is $q/2$.
Therefore, when $q=2$, $S[z]^{\orth{5}{2}}=\field_2[\xi_0,\xi_1,\xi_2,e_1,e_2]$ and for $q>2$, we expect $q/2$ module generators over 
$\field_q[\xi_0,\xi_1,\xi_2,e_1,e_2]$.
\end{remark}

\begin{example} In this example we consider the case $m=2$ and $q>2$. Let $A$ denote the subalgebra of $S_2[z]^{\orth{5}{q}}$ generated by
$\{\xi_0,\xi_1,\xi_2,\xi_3,e_1,e_2\}$. 
We have $e_1=u_2^{q/2-1}\xi_1^{q/2}z+b_1$ and $e_2=u_2^{q/2-1}\xi_2^{q/2}z+b_2$.
Furthermore, eliminating $z$ gives 
$e_1^2+u_2^{q-2}\xi_1^{q}\xi_0=d_{1,2}+u_2^{q-2}\delta_1$  and
$e_2^2+u_2^{q-2}\xi_2^q\xi_0=d_{2,2}+u_2^{q-2}\delta_2$.
Since $u_2$, $\delta_1$ and $\delta_2$ lie in $R_3$, we see that $d_{1,2},d_{2,2}\in A$. 
Since $S_2^{\syp{4}{q}}$ is generated by $\{\xi_1,\xi_2,\xi_3,d_{1,2},d_{2,2}\}$, we have
$S_2^{\syp{4}{q}}\subset A$. In particular, this shows that $\xi_4\in A$.
Therefore, using Theorem~\ref{thm_FF-char2}, $\FF(A)=\FF(S_2[z]^{\orth{5}{q}})$.
It follows from Theorem~\ref{ortho_hsop_thm_char2} that $A$ contains a homogeneous system of parameters.
Therefore, to show that $A=S_2[z]^{\orth{5}{q}}$, it is sufficient to show that $A$ is integrally closed in its field of fractions.

Define $F:=\xi_2^{q/2}e_1+\xi_1^{q/2}e_2$. 
Observe that $F=\xi_2^{q/2}b_1+\xi_1^{q/2}b_2\in S_2^{\syp{4}{q}}$.
Since $q>2$, 
$$\deg(F)=\frac{q}{2}(q^3+1)<(q-1)q^3=\deg(d_{1,2})<\deg(d_{2,2})<\deg(\xi_4).$$
Therefore $F\in R_3$ and $\xi_1^{q/2}e_2$ lies in the polynomial ring $\field_q[\xi_0,\xi_1,\xi_2,\xi_3,e_1]$.
Hence $A[\xi_1^{-1}]=\field_q[\xi_0,\xi_1,\xi_2,\xi_3,e_1][\xi_1^{-1}]$. 
Thus to show $A=S_2[z]^{\orth{5}{q}}$, it is sufficient to prove that $\xi_1$ is prime in $A$.

Using the  descriptions of $b_1^2$ and $b_2^2$ given above,
we have $$F^2=\xi_2^q b_1^2+\xi_1^q b_2^2=\xi_2^q d_{1,2}+\xi_1^q d_{2,2}+u_2^{ q-2}(\xi_2^q \delta_1 +\xi_1^q\delta_2).$$
Using \cite[Theorem 8.3.11]{benson-polyinvafinigrou:93}, we have 
$\xi_2^q d_{1,2}+\xi_1^q d_{2,2}=\xi_3^q+\xi_1u_2^{q-1}$.
Hence $F^2=\xi_3^q+u_2^{ q-2}(\xi_1u_2+\xi_2^q \delta_1 +\xi_1^q\delta_2)$.
Using Lemmas~\ref{ns_sig_lem} and \ref{ei-lem} with $m=2$ gives $\delta_1=\xi_1\xi_2$
and $\delta_2=\xi_1\xi_3$. Since $u_2=\xi_3\xi_1^q+\xi_2^{q+1}+\xi_1^{q^2+1}$,
we have $\xi_1u_2+\xi_2^q \delta_1 +\xi_1^q\delta_2=\xi_1^{q^2+2}$.
Therefore $F=\xi_3^{q/2}+u_2^{q/2-1}\xi_1^{q^2/2+1}$ giving the relation
$$\xi_3^{q/2}=\xi_2^{q/2}e_1+\xi_1^{q/2}e_2+u_2^{q/2-1}\xi_1^{q^2/2+1}.$$
Hence $\xi_3^{q/2}\equiv_{\langle \xi_1\rangle}\xi_2^{q/2}e_1$.
Since $t^{q/2}+\xi_2^{q/2}e_1$ is irreducible in $\field_q(\xi_0,\xi_2,e_1,e_2)$,
$A/\xi_1A$ embeds in the field $\field_q(\xi_0,\xi_2,e_1,e_2)/\langle t^{q/2}+\xi_2^{q/2}e_1\rangle$,
proving that $\xi_1$ is prime in $A$.
\end{example}

\section{\texorpdfstring{$\orth{7}{q}$}{Dimension 7}}\label{sec: dim7}
\label{dim7}

In this section we compute $S_3[z]^{\orth{7}{q}}$ for $q$ even with $q>2$.
Using Theorem~\ref{ortho_hsop_thm_char2}, $\H=\{\xi_0,\xi_1,\xi_2,\xi_3,e_1,e_2,e_3\}$ is a homogeneous system of parameters. 
Let $A$ denote the subalgebra of $S_3[z]^{\orth{7}{q}}$ generated by $\H\cup\{\xi_4,\xi_5,e_4,e_5\}$.
It follows from Lemma~\ref{ei-lem} and the definition of $e_1$ that $S_3^{\syp{6}{q}}\subset A$.
Thus $R_6\subset A$ and, using Theorem~\ref{thm_FF-char2}, $\FF(A)=\FF(S_3[z]^{\orth{7}{q}})$.
Therefore, to show that $A=S_3[z]^{\orth{7}{q}}$, it is sufficient to show that $A$ is integrally closed in its field of fractions.

Using Lemma~\ref{ei-lem} and the definition of $e_1$, we have 
$e_1=u_3^{q/2-1}u_2^{q/2}z+b_1$ and, for $i>1$,
$e_i=u_3^{q/2-1}\overline{u_2d_{i-1}}^{q/2}z+b_i$.
Define $$F_1:=\xi_4^{q/2}e_1+\xi_3^{q/2}e_2+\xi_2^{q/2}e_3+\xi_1^{q/2}e_4.$$
The coefficient of $z$ in $F_1$ is 
$u_3^{q/2-1}(\xi_4u_2+\xi_3\overline{u_2d_1}+\xi_2\overline{u_2d_2}+\xi_1\overline{u_2d_3})^{q/2}$.
The first relation for $S_2^{\syp{4}{q}}$ interpreted as a relation in $R_4$ gives
\begin{equation}\label{rel1-O7}
 \xi_4u_2+\xi_3\overline{u_2d_1}+\xi_2\overline{u_2d_2}+\xi_1\overline{u_2d_3}=0.
 \end{equation}
Therefore $F_1=\xi_4^{q/2}b_1+\xi_3^{q/2}b_2+\xi_2^{q/2}b_3+\xi_1^{q/2}b_4\in S_3^{\syp{6}{q}}$.
Using  the descriptions of the $b_i^2$ we have

\begin{eqnarray*}
F_1^2&=&\xi_4^q b_1^2+\xi_3^q b_2^2+\xi_2^q b_3^2+\xi_1^q b_4^2\\
&=&\xi_4^q d_{1,3} +\xi_3^q d_{2,3}+\xi_2^q d_{3,3}+\xi_1^q d_{4,3}\\
&&+ u_3^{q-2}\overline{\xi}_0(\xi_4u_2+\xi_3\overline{u_2d_1}+\xi_2\overline{u_2d_2}+\xi_1\overline{u_2d_3})^q\\
&&+u_3^{q-2}(\xi_4^q \delta_1+\xi_3^q \delta_2+\xi_2^q \delta_3+\xi_1^q \delta_4).
\end{eqnarray*}
The second relation for $S_3^{\syp{6}{q}}$ is $\xi_5^q=\xi_4^q d_{1,3} +\xi_3^q d_{2,3}+\xi_2^q d_{3,3}+\xi_1^q d_{4,3}+\xi_1 u_3^{q-1}$.
Using this relation and Equation~\ref{rel1-O7} gives
$$F_1^2=\xi_5^q +u_3^{q-2}(\xi_1 u_3+\xi_4^q \delta_1+\xi_3^q \delta_2+\xi_2^q \delta_3+\xi_1^q \delta_4).$$

\begin{lemma}\label{O7-del-rel1} $\xi_1 u_3+\xi_4^q \delta_1+\xi_3^q \delta_2+\xi_2^q \delta_3+\xi_1^q \delta_4=\xi_1^2 u_2^{q^2}$.
\end{lemma}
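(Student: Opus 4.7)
The plan is to expand both sides as polynomial sums in $R_5$ indexed by partitions of a fixed 8-element multiset, and then exploit cancellation in characteristic $2$.

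I would first set $M := \{q^0, q^0, q^1, q^1, q^2, q^3, q^4, q^5\}$ and observe that every monomial in $E := \xi_1 u_3 + \sum_{i=1}^{4} \xi_{5-i}^q \delta_i$ corresponds to a partition of $M$ into four pairs equipped with a \emph{distinguished} pair containing a $q^1$. Indeed, by Lemma~\ref{nat_mon_lem}(i), $u_3$ is the sum over partitions of $\{q^0, q^1, \ldots, q^5\}$ into three pairs, where a pair $\{q^a, q^b\}$ with $a<b$ contributes the natural factor $\xi_{b-a}^{q^a}$; the prefactor $\xi_1$ adds the pair $\{q^0, q^1\}$. By Lemma~\ref{delta_gen_lem}, $\delta_i$ is the sum of terms $\xi_j\xi_k \delta_{jk}^{(i)}$ corresponding to three-pair partitions of $\{q^0, q^0, q^1, \ldots, q^5\} \setminus \{q^{6-i}\}$ with both $q^0$'s in distinct pairs; the prefactor $\xi_{5-i}^q$ adds the pair $\{q^1, q^{6-i}\}$.

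Next, I would classify four-pair partitions $G$ of $M$. Any $G$ containing a pair $\{q^0, q^0\}$ or $\{q^1, q^1\}$ contributes $\xi_0 = 0$ and may be discarded. Otherwise the two $q^1$'s lie in pairs $P_\nu = \{q^1, q^{a_\nu}\}$ for $\nu = 1, 2$ with $a_\nu \in \{0, 2, 3, 4, 5\}$. If $a_1 \neq a_2$, the pairs $P_1, P_2$ are distinct and each may serve as the distinguished pair: $P_\nu$ with $a_\nu = 0$ contributes to $\xi_1 u_3$, while $a_\nu > 1$ contributes to $\xi_{a_\nu - 1}^q \delta_{6 - a_\nu}$. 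Both contributions yield the same underlying monomial (the product of natural factors attached to all pairs of $G$), so they cancel in characteristic $2$. In the remaining case $a_1 = a_2 = 0$, the partition $G$ consists of two identical pairs $\{q^0, q^1\}$ together with a partition of $\{q^2, q^3, q^4, q^5\}$ into two pairs; each such $G$ is hit exactly once, only by $\xi_1 u_3$ (as $\xi_{5-i}^q\delta_i$ requires the distinguished pair $\{q^1, q^{6-i}\}$ with $6 - i \geq 2$).

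The three such partitions of $\{q^2, q^3, q^4, q^5\}$ yield the monomials $\xi_1^{2+q^2+q^4}$, $\xi_1^2\xi_2^{q^2+q^3}$, and $\xi_1^{2+q^3}\xi_3^{q^2}$, summing to $\xi_1^2 u_2^{q^2}$ since $u_2 = \xi_1^{q^2+1} + \xi_2^{q+1} + \xi_1^q\xi_3$. I expect the main obstacle to be the bookkeeping in the $a_1 \neq a_2$ case: I need to verify that after removing the distinguished pair $P_\nu$, the residual three-pair partition is always of the shape demanded by Lemma~\ref{delta_gen_lem} (or Lemma~\ref{nat_mon_lem}(i) in the $a_\nu = 0$ sub-case). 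This will follow directly from the fact that $G$ avoids $\{q^0, q^0\}$ pairs, so both $q^0$'s necessarily remain in distinct pairs of the residual, matching the structural requirement identified in Remark~\ref{delta_gen_rem}.
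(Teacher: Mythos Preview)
Your argument is correct, and in fact it is essentially the combinatorial argument the paper uses later for the general Lemma~\ref{del_rel}. For the specific $m=3$ case (Lemma~\ref{O7-del-rel1}), however, the paper takes a more pedestrian route: it writes out all six terms of each $\delta_1,\ldots,\delta_4$ explicitly from Definition~\ref{delta_one_def} and Lemma~\ref{delta_gen_lem}, multiplies through by $\xi_{5-i}^q$, and then regroups the resulting $24$ terms by the $\xi_1\xi_j$ prefix ($j=2,3,4,5$) to recognise the sum as $\xi_1u_3+\xi_1^2u_2^{q^2}$ via Lemma~\ref{nat_mon_lem}. Your bijective double-counting with the multiset $M$ and a distinguished $q^1$-pair is cleaner, scales immediately to arbitrary $m$, and makes the cancellation mechanism transparent; the paper's explicit computation has the virtue of being directly checkable line by line and of exhibiting the $\delta_i$ concretely for later use in Lemma~\ref{O7-del-rel2}.

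One small point of care: when you write that a pair $\{q^0,q^0\}$ or $\{q^1,q^1\}$ ``contributes $\xi_0=0$'', this clashes with the paper's notation, where $\xi_0$ denotes the quadratic form. What you really need (and what your last paragraph correctly identifies) is simply that such degenerate pairs never occur among the terms of $E$: the $u_3$ factor partitions a genuine set, and the $\delta_i$ factor by construction places the two $q^0$'s in distinct pairs. You should state it that way rather than appealing to a vanishing factor.
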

\begin{proof} Using Lemma~\ref{nat_mon_lem}, $\xi_1 u_3$ is $\xi_1^2 u_2^{q^2}$ plus the sum of the natural monomials of degree
$2+2q+q^2+q^3+q^4+q^5$ and natural degree $4$ with $\xi_1$ as a natural factor. Using Definition~\ref{delta_one_def} and Lemma~\ref{delta_gen_lem}
we have
\begin{eqnarray*}
\delta_1&=&\xi_1\xi_2\xi_1^{q^3}+\xi_1\xi_3\xi_2^{q^2}+\xi_1\xi_4\xi_1^{q^2}
    +\xi_2\xi_3\xi_3^q+\xi_2\xi_4\xi_2^q+\xi_3\xi_4\xi_1^q,\\
\delta_2&=&\xi_1\xi_2\xi_2^{q^3}+\xi_1\xi_3\xi_3^{q^2}+\xi_1\xi_5\xi_1^{q^2}
    +\xi_2\xi_3\xi_4^q+\xi_2\xi_5\xi_2^q+\xi_3\xi_5\xi_1^q,\\
\delta_3&=&\xi_1\xi_2\xi_1^{q^4}+\xi_1\xi_4\xi_3^{q^2}+\xi_1\xi_5\xi_2^{q^2}
    +\xi_2\xi_4\xi_4^q+\xi_2\xi_5\xi_3^q+\xi_4\xi_5\xi_1^q,\\
\delta_4&=&\xi_1\xi_3\xi_1^{q^4}+\xi_1\xi_4\xi_2^{q^3}+\xi_1\xi_5\xi_1^{q^3}
    +\xi_3\xi_4\xi_4^q+\xi_3\xi_5\xi_3^q+\xi_4\xi_5\xi_2^q
    \,{\rm and}\\
\delta_5&=&\xi_2\xi_3\xi_1^{q^4}+\xi_2\xi_4\xi_2^{q^3}+\xi_2\xi_5\xi_1^{q^3}
       +\xi_3\xi_4\xi_3^{q^2}+\xi_3\xi_5\xi_2^{q^2}+\xi_4\xi_5\xi_1^{q^2}    
    \end{eqnarray*}
which gives
\begin{eqnarray*}
    \delta_1\xi_4^q+\delta_2\xi_3^q+\delta_3\xi_2^q+\delta_4\xi_2^q
&=&\xi_1\xi_2\left(\xi_1^{q^3}\xi_4^q+\xi_2^{q^3}\xi_3^q+\xi_1^{q^4}\xi_2^q\right)\\
&&+\xi_1\xi_3\left(\xi_2^{q^2}\xi_4^q+\xi_3^{q^2}\xi_3^q+\xi_1^{q^4}\xi_1^q\right)\\
&&+\xi_1\xi_4\left(\xi_1^{q^2}\xi_4^q+\xi_3^{q^2}\xi_2^q+\xi_2^{q^2}\xi_1^q\right)\\
&&+\xi_1\xi_5\left(\xi_1^{q^2}\xi_3^q+\xi_2^{q^2}\xi_2^q+\xi_1^{q^3}\xi_1^q\right)\\
&=&\xi_1u_3+\xi_1^2u_2^{q^2}
\end{eqnarray*}
as required.
\end{proof}

Using the lemma we have $F_1^2=\xi_5^q+u_3^{q-2}\xi_1^2u_2^{q^2}$. Hence $F_1=\xi_5^{q/2}+u_3^{q/2-1}\xi_1u_2^{q^2/2}$
giving the relation
\begin{equation}\label{O7-eqn1}
\xi_5^{q/2}=\xi_4^{q/2}e_1+\xi_3^{q/2}e_2+\xi_2^{q/2}e_3+\xi_1^{q/2}e_4+u_3^{q/2-1}\xi_1u_2^{q^2/2}.
\end{equation}

Define $$F_2:=\xi_3^{q^2/2}e_1+\xi_2^{q^2/2}e_2+\xi_1^{q^2/2}e_3+\xi_1^{q/2}e_5.$$
The coefficient of $z$ in $F_2$ is 
$u_3^{q/2-1}(\xi_3^qu_2+\xi_2^q\overline{u_2d_1}+\xi_1^q\overline{u_2d_2}+\xi_1u_2^q)^{q/2}$.
The second relation for $S_2^{\syp{4}{q}}$ interpreted as a relation in $R_4$ gives
\begin{equation}\label{prel2-O7}
 \xi_3^qu_2+\xi_2^q\overline{u_2d_1}+\xi_1^q\overline{u_2d_2}+\xi_1u_2^q=0.
 \end{equation}
Therefore $F_2=\xi_3^{q^2/2}b_1+\xi_2^{q^2/2}b_2+\xi_1^{q^2/2}b_3+\xi_1^{q/2}b_5\in S_3^{\syp{6}{q}}$.
Using the descriptions of the $b_i^2$ we have

\begin{eqnarray*}
F_2^2&=&\xi_3^{q^2} b_1^2+\xi_2^{q^2} b_2^2+\xi_1^{q^2} b_3^2+\xi_1^q b_5^2\\
&=&\xi_3^{q^2} d_{1,3} +\xi_2^{q^2} d_{2,3}+\xi_1^{q^2} d_{3,3}+\xi_1^q d_{5,3}\\
&&+ u_3^{q-2}\overline{\xi}_0(\xi_3^q u_2+\xi_2^q\overline{u_2d_1}+\xi_1^q\overline{u_2d_2}+\xi_1u_2^q)^q\\
&&+u_3^{q-2}(\xi_3^{q^2} \delta_1+\xi_2^{q^2} \delta_2+\xi_1^{q^2} \delta_3+\xi_1^q \delta_5).
\end{eqnarray*}
The third relation for $S_3^{\syp{6}{q}}$ is $$\xi_4^{q^2}=\xi_3^{q^2} d_{1,3} +\xi_2^{q^2} d_{2,3}+\xi_1^{q^2} d_{3,3}+\xi_1^q d_{5,3}+\xi_2 u_3^{q-1}.$$
Using this relation and Equation~\ref{prel2-O7} gives
$$F_2^2=\xi_4^{q^2} +u_3^{q-2}(\xi_2 u_3+\xi_3^{q^2} \delta_1+\xi_2^{q^2} \delta_2+\xi_1^{q^2} \delta_3+\xi_1^q \delta_5).$$

\begin{lemma}\label{O7-del-rel2} $\xi_2 u_3+\xi_3^{q^2} \delta_1+\xi_2^{q^2} \delta_2+\xi_1^{q^2} \delta_3+\xi_1^q \delta_5=\xi_2^2 \overline{u_2d_3}^q$.
\end{lemma}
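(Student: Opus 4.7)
The plan is to mirror the combinatorial verification used in the proof of Lemma~\ref{O7-del-rel1}. First I would recall the explicit expansions of $\delta_1,\delta_2,\delta_3,\delta_5$ already written out there, each a sum of six natural monomials in $R_5$ of natural degree $3$. Expanding the four products $\xi_3^{q^2}\delta_1$, $\xi_2^{q^2}\delta_2$, $\xi_1^{q^2}\delta_3$, $\xi_1^q\delta_5$ yields $24$ natural monomials in $R_5$. Six pairs coincide---for instance $\xi_1\xi_3^{1+q^2}\xi_2^{q^2}$ lies in both $\xi_3^{q^2}\delta_1$ and $\xi_2^{q^2}\delta_2$---and cancel in characteristic two, leaving twelve distinct natural monomials each containing $\xi_2$ with exponent $1$. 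I would then factor $\xi_2$ out of this sum.

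To interpret the right-hand side, I would invoke Lemma~\ref{nat_mon_lem}(i): $u_3$ is the sum of fifteen natural monomials in $R_5$ of natural degree $3$, indexed by partitions of $\{1,q,q^2,q^3,q^4,q^5\}$ into three pairs. Multiplication by $\xi_2$ splits these into two cases. The three partitions that include the pair $\{1,q^2\}$ already have $\xi_2$ as a natural factor, so multiplication by $\xi_2$ promotes its exponent to $2$; the resulting $\xi_2^2$ times the three products of the remaining natural factors equals the sum indexed by partitions of $\{1,q^2,q^3,q^4\}$ into pairs, which by Remark~\ref{rem_u_m-1} (with $m=3$ and $i=3$) is precisely $\xi_2^2\overline{u_2 d_3}^q$. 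For the remaining twelve partitions, multiplication by $\xi_2$ preserves naturality (the resulting exponents still have base-$q$ digits only in $\{0,1\}$) and produces twelve natural monomials in $R_5$.

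The main obstacle is the term-by-term matching of these twelve natural monomials from $\xi_2 u_3$ with the twelve surviving terms from the four products $\xi_3^{q^2}\delta_1$, $\xi_2^{q^2}\delta_2$, $\xi_1^{q^2}\delta_3$, $\xi_1^q\delta_5$. This matching is mechanical but intricate, governed by the combinatorial interpretation of the $\delta_i$ in Remark~\ref{delta_gen_rem}: a partition avoiding $\{1,q^2\}$ is matched to the $\delta_i$ whose distinguished excluded power $q^{6-i}$ coincides with an element of one of its pairs, with the other two pairs determining the $\xi_j\xi_k$ prefix of the relevant natural monomial. Carrying this matching out systematically is the main computational content of the lemma.
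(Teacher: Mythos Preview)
Your approach is correct and essentially matches the paper's: both proofs expand the four products $\xi_3^{q^2}\delta_1+\xi_2^{q^2}\delta_2+\xi_1^{q^2}\delta_3+\xi_1^q\delta_5$ using the explicit six-term formulae for the $\delta_i$ already recorded in the proof of Lemma~\ref{O7-del-rel1}, cancel the duplicate terms in characteristic two, and identify the surviving twelve monomials (each having $\xi_2$ as a natural factor) with $\xi_2u_3+\xi_2^2\overline{u_2d_3}^q$ via Lemma~\ref{nat_mon_lem}. The paper organises the twelve survivors by grouping them as $\xi_2\xi_j(\cdots)$ for $j\in\{1,3,4,5\}$, but this is purely a presentational difference; one small slip in your write-up is that the remaining natural factors after stripping $\xi_2$ from the three $u_3$-monomials containing the pair $\{1,q^2\}$ correspond to partitions of $\{q,q^3,q^4,q^5\}$ (equivalently, to $\overline{u_2d_3}^q$), not of $\{1,q^2,q^3,q^4\}$.
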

\begin{proof} Using Lemma~\ref{nat_mon_lem}, $\xi_2 u_3$ is $\xi_2^2 \overline{u_2d_3}^q$ plus the sum of the natural monomials of degree
$2+q+2q^2+q^3+q^4+q^5$ and natural degree $4$ with $\xi_2$ as a natural factor.  Using the descriptions of the $\delta_i$ 
from Lemma~\ref{O7-del-rel1} we have
\begin{eqnarray*}
\delta_1\xi_3^{q^2}+\delta_2\xi_2^{q^2}+\delta_3\xi_1^{q^2}+\delta_5\xi_1^q
&=&\xi_1\xi_2\left(\xi_1^{q^3}\xi_3^{q^2}+\xi_2^{q^3}\xi_2^{q^2}+\xi_1^{q^4}\xi_1^{q^2}\right)\\
&&+\xi_2\xi_3\left(\xi_2^{q^2}\xi_4^q+\xi_3^{q^2}\xi_3^q+\xi_1^{q^4}\xi_1^q\right)\\
&&+\xi_2\xi_4\left(\xi_1^{q^2}\xi_4^q+\xi_3^{q^2}\xi_2^q+\xi_2^{q^2}\xi_1^q\right)\\
&&+\xi_2\xi_5\left(\xi_1^{q^2}\xi_3^q+\xi_2^{q^2}\xi_2^q+\xi_1^{q^3}\xi_1^q\right)\\
&=&\xi_2u_3+\xi_2^2\overline{u_2d_3}^q
\end{eqnarray*}
as required.
\end{proof}

Using the Lemma we have $F_2^2=\xi_4^{q^2}+u_3^{q-2}\xi_2^2\overline{u_2d_3}^q$.
Hence $F_2=\xi_4^{q^2/2}+u_3^{q/2-1}\xi_2\overline{u_2d_3}^{q/2}$ giving the relation
\begin{equation}\label{O7-eqn2}
\xi_4^{q^2/2}=\xi_3^{q^2/2}e_1+\xi_2^{q^2/2}e_2+\xi_1^{q^2/2}e_3+\xi_1^{q/2}e_5+u_3^{q/2-1}\xi_2\overline{u_2d_3}^{q/2}.
\end{equation}

Define $P_5:=e_5+u_2^{q(q-1)/2}e_1$. The coefficient of $z$ in $e_5$ is $u_3^{q/2-1}\overline{u_2d_4}^{q/2}=u_3^{q/2-1}u_2^{q^2/2}$.
Therefore the coefficient of $z$ in $P_5$ is zero and $P_5\in S_3^{\syp{6}{q}}$. 
Since $$\deg(e_5)=(q^6-q)/2<(q-1)q^5=\deg(d_{1,3})<q^6+1=\deg(\xi_6),$$
we have $P_5\in R_5$. This gives the relation
\begin{equation}\label{O7-eqn3}
e_5=u_2^{q(q-1)/2}e_1+P_5\in R_5[e_1].
\end{equation}

Define $P_4:=e_4+\xi_1^{(q-1)q^2/2}e_2+P_{1,2}^{q/2}e_1$ where $P_{1,2}\in R_3$
is defined as in \cite[Prop. 8.3.7]{benson-polyinvafinigrou:93}.
The coefficient of $z$ in $P_4$ is $u_3^{q/2-1}(\overline{u_2d_3}+(\xi_1^{q-1})^q\overline{u_2d_1}+P_{1,2}u_2)^{q/2}$,
which is zero using the fourth relation in $S_2^{\syp{4}{q}}$. Therefore $P_4\in S_3^{\syp{6}{q}}$. 
Since $\deg(e_4)=(q^6-q^2)/2<\deg(d_{1,3})$, we have $P_4\in R_5$ . This gives the relation
\begin{equation}\label{O7-eqn4}
e_4=\xi_1^{(q-1)q^2/2}e_2+P_{1,2}^{q/2}e_1+P_4.
\end{equation}

Define
$$\widetilde{M}:=\begin{pmatrix}
0&\xi_1^{q/2}& \xi_2^{q/2} & \xi_3^{q/2}\\
\xi_1^{q/2}& 0& \xi_1^{q^2/2}&\xi_2^{q^2/2}\\
1&0&0&0\\
0&1&0& \xi_1^{(q-1)q^2/2}
\end{pmatrix}$$
and observe that $\det(\widetilde{M})=u_2^{q/2}$.
The relations can be written in matrix form as
$$\widetilde{M} \begin{pmatrix} e_5\\ e_4 \\ e_3 \\ e_2 \end{pmatrix} 
=\begin{pmatrix} \xi_4^{q/2}e_1+\xi_5^{q/2}+u_3^{q/2-1}\xi_1u_2^{q^2/2}\\ 
\xi_3^{q^2/2}e_1+\xi_4^{q^2/2}+u_3^{q/2-1}\xi_2\overline{u_2d_3}^{q/2} \\ 
u_2^{q(q-1)/2}e_1+P_5 \\
P_{1,2}^{q/2} e_1+P_4 \end{pmatrix} .$$
 Observe that the entries of $\widetilde{M}$ lie in $R_3$ and the right hand side lies in $R_5[e_1]$.
 After inverting $u_2$, we can solve for $e_2$, $e_3$, $e_4$ and $e_5$ in $R_5[e_1,u_2^{-1}]$.
 Thus $A[u_2^{-1}]=R_5[\xi_0,e_1,u_2^{-1}]$.
 Since $\{\xi_0,\xi_1,\ldots,\xi_5,e_1\}$ is algebraically independent, to show that $A=S_3[z]^{\orth{7}{q}}$,
 it is sufficient to prove that $u_2$ is prime in $A$.

\begin{lemma} \label{O7-alg-ind} The set $\{\xi_0,\xi_1,\xi_2,\xi_4,\xi_5,e_1,e_2\}$ is algebraically independent.
\end{lemma}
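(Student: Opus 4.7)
The plan is to show that the subfield $L := \field_q(\xi_0,\xi_1,\xi_2,\xi_4,\xi_5,e_1,e_2)$ of $\FF(S_3[z]^{\orth{7}{q}})$ has transcendence degree $7$ over $\field_q$. Since by \thmref{thm_FF-char2} the full fraction field has transcendence degree $7$, this will force the seven generators of $L$ to be a transcendence basis, hence algebraically independent. I will use the algebraic independence of $\{\xi_0,\xi_1,\ldots,\xi_5,e_1\}$ asserted in the paragraph immediately preceding the lemma.

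Set $M := \field_q(\xi_0,\xi_1,\xi_2,\xi_4,\xi_5,e_1)$, a subfield of transcendence degree $6$. Since $\{\xi_0,\xi_1,\ldots,\xi_5,e_1\}$ is algebraically independent, $\xi_3$ is transcendental over $M$, so $M(\xi_3)/M$ is purely transcendental and $M$ is algebraically closed in $M(\xi_3)$. By Cramer's rule applied to the matrix equation preceding the lemma, $e_2 \in R_5[e_1,u_2^{-1}] \subseteq \field_q(\xi_1,\ldots,\xi_5,e_1) \subseteq M(\xi_3)$. The problem therefore reduces to verifying $e_2 \notin M$: once this is shown, $e_2$ is transcendental over $M$, so $L = M(e_2)$ has transcendence degree $7$ as required.

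To verify $e_2 \notin M$, eliminate $z$ between $e_1 = a_1 z + b_1$ and $e_2 = a_2 z + b_2$ to obtain
$$e_2 = (a_2/a_1)\, e_1 + D, \qquad a_2/a_1 = (\overline{u_2 d_1}/u_2)^{q/2}, \quad D \in \field_q(\xi_1,\ldots,\xi_5);$$
the containment $D \in \field_q(\xi_1,\ldots,\xi_5)$ follows because $e_2$ itself lies in $\field_q(\xi_1,\ldots,\xi_5,e_1)$ and $e_1$ is transcendental over $\field_q(\xi_1,\ldots,\xi_5)$, so the coefficient decomposition in $e_1$ is unique. Using the example after \lemref{nat_mon_lem}, $u_2 = \xi_3 \xi_1^q + \xi_2^{q+1} + \xi_1^{q^2+1}$, and a short Steenrod computation gives $\overline{u_2 d_1} = \PP^{q^3}(u_2) = \xi_4 \xi_1^q + \xi_2 \xi_3^q + \xi_1 \xi_2^{q^2}$. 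Numerator and denominator have different $\xi_3$-degrees (namely $q$ and $1$), so $\overline{u_2 d_1}/u_2$ is a genuinely $\xi_3$-dependent element of $\field_q(\xi_1,\ldots,\xi_4)$, as is its $q/2$-th power $a_2/a_1$.

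Finally, by the algebraic independence of $\{\xi_0,\xi_1,\ldots,\xi_5,e_1\}$, the intersection $\field_q(\xi_1,\ldots,\xi_5) \cap \field_q(\xi_0,\xi_1,\xi_2,\xi_4,\xi_5)$ equals $\field_q(\xi_1,\xi_2,\xi_4,\xi_5)$; since $a_2/a_1$ lies in the former but not the latter, it does not lie in $\field_q(\xi_0,\xi_1,\xi_2,\xi_4,\xi_5)$. Because $e_1$ is transcendental over $\field_q(\xi_0,\xi_1,\xi_2,\xi_4,\xi_5)$, each element of $M$ has a unique expansion as a rational function of $e_1$ over this subring, and if $e_2$ lay in $M$ its $e_1$-coefficient $a_2/a_1$ would belong to $\field_q(\xi_0,\xi_1,\xi_2,\xi_4,\xi_5)$, a contradiction. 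The main technical step is confirming the explicit decomposition $e_2 = (a_2/a_1) e_1 + D$ together with the $\xi_3$-dependence of $\overline{u_2 d_1}/u_2$; both are direct computations using the material of Section~\ref{sec: symplectics}.
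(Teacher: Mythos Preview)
Your proof is correct and takes a genuinely different route from the paper's. The paper introduces the auxiliary field $K = \field_q(\xi_1,\xi_2,\xi_4,\xi_5,d_{1,3},d_{2,3})$ and proves $\xi_3$ is algebraic over $K$ by exhibiting an explicit polynomial $H(t)\in K[t]$ with $H(\xi_3)=0$, built from the symplectic identity $\overline{u_2d_1}^{\,q}d_{1,3}-u_2^{\,q}d_{2,3}\in R_5$; this gives $\text{tr.deg}(K)=6$, and then $L$ is compared with $K(\xi_0)$. You instead work directly with $M=\field_q(\xi_0,\xi_1,\xi_2,\xi_4,\xi_5,e_1)$ and show $e_2\notin M$ via the affine relation $e_2=(a_2/a_1)e_1+D$, observing that $a_2/a_1=(\overline{u_2d_1}/u_2)^{q/2}$ has genuine $\xi_3$-content by a $\xi_3$-degree count. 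Your argument is shorter and avoids the Dickson invariants, leaning on the already-established independence of $\{\xi_0,\ldots,\xi_5,e_1\}$; the paper's argument is more self-contained in that respect, and its template (showing a particular $\xi_i$ is algebraic over a field built from two Dickson invariants) is exactly what is reused in Lemma~\ref{Oodd-alg-ind} for general $m$, though your approach would generalise there as well with $\overline{u_{m-1}d_1}/u_{m-1}$ in place of $\overline{u_2d_1}/u_2$. One small point to tighten: in the last paragraph you invoke the ``$e_1$-coefficient'' of $e_2$ as an element of $M$, which presupposes that any representation of $e_2$ in $\field_q(\xi_0,\xi_1,\xi_2,\xi_4,\xi_5)(e_1)$ is polynomial in $e_1$; this follows because $e_2$ is already linear in $e_1$ over the larger coefficient field $\field_q(\xi_1,\ldots,\xi_5)$ and gcds in $k[T]$ are stable under extension of $k$, forcing the denominator of any coprime representation over the smaller field to be constant.
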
 
\begin{proof} Let $L$ denote the field generated by $\{\xi_0,\xi_1,\xi_2,\xi_4,\xi_5,e_1,e_2\}$.
Let $K$ denote the field generated by $\{\xi_1,\xi_2,\xi_4,\xi_5,d_{1,3},d_{2,3}\}$. 
We will show that the transcendence  degree of $K$ is $6$.
Since $K(\xi_0)$ has transcendence  degree $1$ over $K$ and $L$ is a finite extension of $K(\xi_0)$,
this shows that $L$ has transcendence degree $7$, proving the result.

We will use the expressions for $u_3d_{1,3}$ and $u_3d_{2,3}$ as elements in $R_6$ given by Lemma~\ref{nat_mon_lem}.
Since the expression for $u_3d_{1,3}$ has degree $1$ as a polynomial in $\xi_6$, $K(\xi_3)=\field_q(\xi_1,\ldots,\xi_6)=\field_q(\xi_1,\ldots,\xi_5,d_{1,3})$
has transcendence degree $6$. We will show that $K\subset K(\xi_3)$ is a finite extension.

Cross multiplying to eliminate $\xi_6$, define $$F:=\overline{u_2 d_1}^q u_3 d_{1,3} -u_2^q u_3 d_{2,3}\in R_5.$$
Dividing by $u_3$ gives $\overline{u_2 d_1}^q d_{1,3} -u_2^q d_{2,3}=F/u_3\in R_5$ (see \cite[Lemma 8.3.5]{benson-polyinvafinigrou:93}).
Recall that $u_2=\xi_3\xi_1^q+\xi_2^{q+1}+\xi_1^{q^2+1}$ and $u_2d_{1,2}=\xi_4\xi_1^q+\xi_3^q\xi_2+\xi_2^{q^2}\xi_1$.
Define $\overline{F}(t)\in \field_q[\xi_1,\xi_2,\xi_4,\xi_5][t]$ so that $\overline{F}(\xi_3)=F/u_3$.
Define
$$H(t):=(\xi_4\xi_1^q+t^q\xi_2+\xi_2^{q^2}\xi_1)^qd_{1,3}+(t\xi_1^q+\xi_2^{q+1}+\xi_1^{q^2+1})^qd_{2,3}+\overline{F}(t)\in K[t]$$
and observe that $H(\xi_3)=0$. Therefore, as long as $H$ is not identically zero, $\xi_3$ is a root of a polynomial in $K[t]$ and
the field extension $K\subset K(\xi_3)$ is finite. To see that $H$ is not identically zero, note that the coefficient of $t^{q^2}$
is $\xi_2^q d_{1,3}+\varepsilon$ for some $\varepsilon\in \field_q[\xi_1,\xi_2,\xi_4,\xi_5]$ and
$\{\xi_1,\xi_2,\xi_4,\xi_5,d_{1,3}\}$ is algebraically independent.
\end{proof}

\begin{theorem}\label{O7-main-thm} (a) $A=S_3[z]^{\orth{7}{q}}$.\\
(b) $S_3[z]^{\orth{7}{q}}$ is a complete intersection with relations given by Equations~\ref{O7-eqn1}, \ref{O7-eqn2}, \ref{O7-eqn3} and \ref{O7-eqn4}.\\
(c) $S_3[z]^{\orth{7}{q}}$ is a free module over $\field_q[\H]$ with module generators given by the monomial factors of $\xi_5^{q/2-1}\xi_4^{q^2/2-1}$.
\end{theorem}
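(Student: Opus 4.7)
The strategy for part (a) is to mirror the $\orth{5}{q}$ argument from the preceding example. Since $A[u_2^{-1}]=\field_q[\xi_0,\xi_1,\ldots,\xi_5,e_1][u_2^{-1}]$ is a localized polynomial ring (hence a UFD), Nagata's criterion reduces the proof of $A=S_3[z]^{\orth{7}{q}}$ to showing $u_2$ is prime in $A$: primality makes $A$ a UFD and hence integrally closed; combined with $\FF(A)=\FF(S_3[z]^{\orth{7}{q}})$ from \thmref{thm_FF-char2} and the integrality of $S_3[z]^{\orth{7}{q}}$ over $A$ (since $\H\subset A$), this forces $A=S_3[z]^{\orth{7}{q}}$.

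To prove $u_2$ is prime, I would exhibit $A/u_2A$ as a domain. Modulo $u_2$ the identity $\xi_3\xi_1^q\equiv \xi_2^{q+1}+\xi_1^{q^2+1}$ lets us eliminate $\xi_3$ (after localizing at $\xi_1$); Equations~\ref{O7-eqn3} and \ref{O7-eqn4} eliminate $e_5$ and $e_4$ in terms of $e_1,e_2$ and elements of $R_5$; and Equations~\ref{O7-eqn1} and \ref{O7-eqn2} reduce to polynomial identities for $\xi_5^{q/2}$ and $\xi_4^{q^2/2}$ over $\field_q[\xi_0,\xi_1,\xi_2,\xi_4,\xi_5,e_1,e_2,e_3]$. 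Using the algebraic independence of $\{\xi_0,\xi_1,\xi_2,\xi_4,\xi_5,e_1,e_2\}$ from \lemref{O7-alg-ind}, one can realise $A/u_2A$ as an integral extension that embeds into a field, giving primality.

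With (a) in hand, part (b) follows from a dimension count: $A$ is a graded domain of Krull dimension $7$ generated by the $11$ elements $\{\xi_0,\ldots,\xi_5,e_1,\ldots,e_5\}$, so a complete intersection requires exactly four relations. Equations~\ref{O7-eqn1}--\ref{O7-eqn4} express $\xi_5^{q/2}$, $\xi_4^{q^2/2}$, $e_5$ and $e_4$ in terms of earlier generators, so the quotient of the polynomial ring on $11$ variables by these relations has Krull dimension $\ge 11-4=7$; the surjection onto the $7$-dimensional domain $A$ forces equality of dimension, so the four relations generate the entire relation ideal, establishing the complete intersection (and hence Cohen-Macaulay) property. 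For (c), Cohen-Macaulayness gives freeness of $A$ over $\field_q[\H]$; Equations~\ref{O7-eqn3} and \ref{O7-eqn4} eliminate $e_4,e_5$ while Equations~\ref{O7-eqn1} and \ref{O7-eqn2} bound the exponents of $\xi_5,\xi_4$ to $0\le a\le q/2-1$ and $0\le b\le q^2/2-1$; the resulting $q^3/4$ monomial factors of $\xi_5^{q/2-1}\xi_4^{q^2/2-1}$ span $A$ over $\field_q[\H]$, and a Hilbert series comparison (or equivalently the coincidence $\prod_{h\in\H}\deg(h) = (q^3/4)\cdot|\orth{7}{q}|$) confirms they form a basis.

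The main obstacle will be the primality of $u_2$: the four relations modulo $u_2$ are intricate since one must simultaneously eliminate $\xi_3$ via the identity $u_2\equiv 0$ and track how this reduction interacts with $\overline{u_2d_3}$ in Equation~\ref{O7-eqn2} and with the $R_5$-elements $P_4,P_5$ in Equations~\ref{O7-eqn3}--\ref{O7-eqn4}. Constructing a clean embedding $A/u_2A\hookrightarrow D$ into a domain $D$ will require careful bookkeeping of the reductions and of which subrings house each intermediate piece.
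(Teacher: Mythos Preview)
Your strategy---Nagata's criterion plus primality of $u_2$---is exactly the paper's, but you apply it to $A$ directly, whereas the paper applies it to the presented ring $\overline{T}:=T/\langle r_1,r_2,r_3,r_4\rangle$ with $T=R_5[\xi_0,E_1,\ldots,E_5]$. This distinction is not cosmetic: it is where your argument has a gap.

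To show $u_2$ is prime in $A$ you propose eliminating $\xi_3,e_3,e_4,e_5$ in $A[\xi_1^{-1}]/u_2$ via the four equations, obtaining a surjection from $\field_q[\xi_0,\xi_1^{\pm 1},\xi_2,\xi_4,\xi_5,e_1,e_2]$ onto $A[\xi_1^{-1}]/u_2$. But you cannot conclude this surjection is injective without already knowing that Equations~\ref{O7-eqn1}--\ref{O7-eqn4} generate the full relation ideal of $A$; any extra relation could survive the elimination and produce a nontrivial kernel. So your primality argument for $u_2$ in $A$ presupposes part (b). Your dimension argument for (b) has the same circularity: a surjection from $\overline{T}$ (of dimension $\geq 7$) onto the $7$-dimensional domain $A$ does not force injectivity unless one already knows $\overline{T}$ is a domain.

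The paper avoids this by doing the UFD argument in $\overline{T}$, where the relations are known by construction. After verifying that $r_1,r_2,r_3,r_4$ extend to a regular sequence in $T$ (so $\overline{T}$ is a complete intersection with $\xi_1,u_2$ regular), one eliminates $\xi_3,\overline{E_3},\overline{E_4},\overline{E_5}$ in $\overline{T}[\xi_1^{-1}]/u_2$ using $u_2,r_2,r_3,r_4$; the only residual relation is the image of $r_1$, and \lemref{O7-alg-ind} forces that image to vanish identically (it is a polynomial in $\xi_0,\xi_1,\xi_2,\xi_4,\xi_5,\overline{E_1},\overline{E_2}$ that maps to zero in $A$, where these elements are algebraically independent). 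Thus $\overline{T}[\xi_1^{-1}]/u_2$ is a polynomial ring, $u_2$ is prime in $\overline{T}$, and Nagata gives that $\overline{T}$ is a UFD. Now $\overline{\rho}:\overline{T}\to A$ is a surjection of domains inducing an isomorphism on fraction fields, hence an isomorphism; this yields (a) and (b) simultaneously. Your argument for (c) is then correct.
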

\begin{proof} We denote by $T$ the polynomial algebra $R_5[\xi_0,E_1,E_2,E_3,E_4,E_5]$ and let $\rho:T\to A$ denote the algebra epimorphism taking $E_i$ to $e_i$.
Define 
\begin{eqnarray*}
r_1&:=&\xi_5^{q/2}+\xi_4^{q/2}E_1+\xi_3^{q/2}E_2+\xi_2^{q/2}E_3+\xi_1^{q/2}E_4+u_3^{q/2-1}\xi_1u_2^{q^2/2},\\
r_2&:=&\xi_4^{q^2/2}+\xi_3^{q^2/2}E_1+\xi_2^{q^2/2}E_2+\xi_1^{q^2/2}E_3+\xi_1^{q/2}E_5+u_3^{q/2-1}\xi_2\overline{u_2d_3}^{q/2},\\
r_3&:=&E_5+u_2^{q(q-1)/2}E_1+P_5 \, {\rm and} \\
r_4&:=&E_4+\xi_1^{(q-1)q^2/2}E_2+P_{1,2}^{q/2}E_1+P_4.
\end{eqnarray*}
Let $\overline{T}$ denote the quotient $T/\langle r_1,r_2,r_3,r_4\rangle$ and 
observe that $\rho$ induces an epimorphism from $\overline{T}$ to $A$, say $\overline{\rho}$.

Note that 
\begin{eqnarray*}
r_1&\equiv& \xi_5^{q/2} \mod{\langle \xi_1,\xi_2,\xi_3,\xi_4\rangle},\\ 
r_2&\equiv& \xi_4^{q^2/2} \mod{\langle \xi_1,\xi_2,\xi_3\rangle},\\
r_3&\equiv& E_5 \mod{\langle \xi_1,\xi_2,\xi_3,\xi_4,\xi_5\rangle} \, {\rm and} \\
r_4&\equiv &E_4 \mod{\langle \xi_1,\xi_2,\xi_3,\xi_4,\xi_5\rangle}.
\end{eqnarray*}
Since $u_2=\xi_3\xi_1^q+\xi_2^{q+1}+\xi_1^{q^2+1}$, we also have $u_2\equiv_{\langle\xi_1\rangle}\xi_2^{q+1}$.
Therefore $$\xi_0,\xi_1,u_2,\xi_3, r_2, r_1,r_3,r_4,E_1,E_2,E_3$$ is a regular sequence in $T$.
In a graded polynomial ring, a partial homogeneous system of parameters is regular sequence. 
Therefore $r_1,r_2,r_3,r_4$ is a regular sequence  in $T$ and $\overline{T}$ is a complete intersection.
Furthermore, the equivalence classes of $\xi_0,\xi_1,u_2,\xi_3,E_1,E_2,E_3$ are a regular sequence in $\overline{T}$.
Since $\overline{T}$ has Krull dimension $7$, this means that $\overline{T}$ is Cohen-Macauley. 

In  the following, we identify $R_5$ with its image in $\overline{T}$ and use $\overline{E_i}$ to denote the equivalence class of $E_i$ in $\overline{T}$.
In $\overline{T}$, we have
$$\widetilde{M} \begin{pmatrix} \overline{E_5}\\ \overline{E_4} \\ \overline{E_3} \\ \overline{E_2} \end{pmatrix} 
=\begin{pmatrix} \xi_4^{q/2}\overline{E_1}+\xi_5^{q/2}+u_3^{q/2-1}\xi_1u_2^{q^2/2}\\ 
\xi_3^{q^2/2}\overline{E_1}+\xi_4^{q^2/2}+u_3^{q/2-1}\xi_2\overline{u_2d_3}^{q/2} \\ 
u_2^{q(q-1)/2}\overline{E_1}+P_5 \\
P_{1,2}^{q/2} \overline{E_1}+P_4 \end{pmatrix} .$$
Since $\det(\widetilde{M})=u_2^{q/2}$, after inverting $u_2$, we can eliminate $\overline{E_2}$, $\overline{E_3}$, $\overline{E_4}$ and $\overline{E_5}$.
This gives $\overline{T}[u_2^{-1}]=R_5[\xi_0,\overline{E_1}][u_2^{-1}]$. Note that, since $\FF(A)=\field_q(\xi_0,\xi_1,\ldots,\xi_5,e_1)$,
the set $\{\xi_0,\xi_1,\ldots,\xi_5,\overline{E_1}\}$ is algebraically independent, which means that $R_5[\xi_0,\overline{E_1}]$ is a UFD.

We know that $\xi_1$, $u_2$ is a regular sequence in $\overline{T}$ and $\xi_3\xi_1^q\equiv_{\langle u_2\rangle} \xi_2^{q+1}+\xi_1^{q^2+1}$.
We can use this congruence to eliminate $\xi_3$ in $\overline{T}[\xi_1^{-1}]/\langle u_2\rangle$.
Similarly, we can use $r_2$ to eliminate $\overline{E_3}$ in $\overline{T}[\xi_1^{-1}]/\langle u_2\rangle$.
Using $r_4$ and $r_5$ we can eliminate $\overline{E_5}$ and $\overline{E_4}$. 
This gives a correspondence between elements of $\overline{T}[\xi_1^{-1}]/\langle u_2\rangle$ and
elements of $\field_q[\xi_0,\xi_1,\xi_2,\xi_4,\xi_5,\overline{E_1},\overline{E_2}][\xi_1^{-1}]$.
From Lemma~\ref{O7-alg-ind}, $\{\xi_0,\xi_1,\xi_2,\xi_4,\xi_5,e_1,e_2\}$ is an algebraically independent subset of $A$,
which means that $u_2$ is prime in $\overline{T}[\xi_1^{-1}]$ and, therefore, prime in $\overline{T}$ (see \cite[Proposition 1.1]{Kropholler+Rajaei+Segal:05}).
This proves that $\overline{T}$ is integrally closed in its field of fractions.
Since $\overline{\rho}$ induces an isomorphism on fraction fields, it is injective.
Thus $\overline{T}$ is isomorphic to $A$, proving that $A$ is integrally closed in its field of fractions and, therefore,
 $A=S_3[z]^{\orth{7}{q}}$. Furthermore, since $\overline{T}$ is a complete intersection, $S_3[z]^{\orth{7}{q}}$ is a complete intersection,
 completing the proof of parts (a) and (b). Since $S_3[z]^{\orth{7}{q}}$ is a complete intersection, it is Cohen-Macaulay. Thus 
 $S_3[z]^{\orth{7}{q}}$  is a free module over $\field_q[\H]$ of rank
 $$\frac{2(q+1)(q^2+1)(q^3+1)(q^5(q-1)/2)(q^4(q-1)/2)(q^3(q-1)/2)}{q^9(q^2-1)(q^4-1)(q^6-1)}=q^3/4.$$
 Since the monomial factors of $\xi_5^{q/2-1}\xi_4^{q^2/2-1}$ are a spanning set of size $q^3/4$, they form a basis, proving part (c).
\end{proof}

\begin{remark}\label{O7-rem}
    When $q=2$ and $m=3$, we have
    $$\delta_1=\xi_1\xi_2\xi_1^8+\xi_1\xi_3\xi_2^4+\xi_1\xi_4\xi_1^4
    +\xi_2\xi_3\xi_3^2+\xi_2\xi_4\xi_2^2+\xi_3\xi_4\xi_1^2.$$
    Applying Steenrod operations gives 
    \begin{eqnarray*}
        \delta_2&=&\xi_1\xi_2\xi_2^8+\xi_1\xi_3\xi_3^4+\xi_1\xi_5\xi_1^4
    +\xi_2\xi_3\xi_4^2+\xi_2\xi_5\xi_2^2+\xi_3\xi_5\xi_1^2+\xi_1^4\xi_3^4,\\
    \delta_3&=&\xi_1\xi_2\xi_1^{16}+\xi_1\xi_4\xi_3^4+\xi_1\xi_5\xi_2^4
    +\xi_2\xi_4\xi_4^2+\xi_2\xi_5\xi_3^2+\xi_4\xi_5\xi_1^2,\\
    \delta_4&=&\xi_1\xi_3\xi_1^{16}+\xi_1\xi_4\xi_2^8+\xi_1\xi_5\xi_1^8
    +\xi_3\xi_4\xi_4^2+\xi_3\xi_5\xi_3^2+\xi_4\xi_5\xi_2^2+\xi_1^{20}
    \,{\rm and}\\
    \delta_5&=&\xi_2\xi_3\xi_1^{16}+\xi_2\xi_4\xi_2^8+\xi_2\xi_5\xi_1^8
       +\xi_3\xi_4\xi_3^4+\xi_3\xi_5\xi_2^4+\xi_4\xi_5\xi_1^4
    +\xi_2^2\xi_3^2\xi_4^2.    
    \end{eqnarray*}
    Using Lemma~\ref{nat_mon_lem} and these expressions for the $\delta_i$,
    we get
    \begin{eqnarray*}
        \delta_1\xi_4^2+\delta_2\xi_3^2+\delta_3\xi_2^2+\delta_4\xi_2^2
        &=&\xi_1u_3+\xi_1^2 u_2^4+\xi_1^{22}+\xi_1^4\xi_3^6 \,{\rm and}\\
        \delta_1\xi_3^4+\delta_2\xi_2^4+\delta_3\xi_1^4+\delta_5\xi_1^2
        &=&\xi_2u_3+\xi_2^2\overline{u_2d_3}^2
        +\xi_1^4\xi_2^4\xi_3^4+\xi_1^2\xi_2^2\xi_3^2\xi_4^2
    \end{eqnarray*}
    (compare with Lemmas \ref{O7-del-rel1} and \ref{O7-del-rel2})
    which give the relations
    \begin{eqnarray*}
        \xi_5&=&\xi_4e_1+\xi_3e_2+\xi_2e_3+\xi_1e_4+\xi_1 u_2^2
        +\xi_1^{11}+\xi_1^2\xi_3^3 \, {\rm and}\\
        \xi_4^2&=&\xi_3^2e_1+\xi_2^2e_2+\xi_1^2e_3+\xi_1e_5+\xi_1 u_2^2
        +\xi_1^2\xi_2^2\xi_3^2+\xi_1\xi_2\xi_3\xi_4
    \end{eqnarray*}
    (compare with Equations \ref{O7-eqn1} and \ref{O7-eqn2}).
    Observe that Equation~\ref{O7-eqn3}, Equation~\ref{O7-eqn4}
    and Lemma~\ref{O7-alg-ind} are valid for $q=2$.
    Arguing as in the proof of Theorem~\ref{O7-main-thm},
    we conclude that $S_3[z]^{\orth{7}{2}}$ is the hypersurface
    generated by $\{\xi_0,\xi_1,\xi_2,\xi_3,\xi_4,e_1,e_2,e_3\}$
    subject to a relation which rewrites $\xi_4^2$.
    This is consistent with \cite[Theorem 6.1]{Kropholler+Rajaei+Segal:05}.
    \end{remark}

\section{Relations}\label{sec: relations}

The following generalises Lemmas \ref{O7-del-rel1} and \ref{O7-del-rel2}.
\begin{lemma} \label{del_rel} Suppose $q>2$.\\
(a) $\xi_1u_m+\xi_1^2u_{m-1}^{q^2}=\xi_{2m-2}^q\delta_1+\xi_{2m-3}^q\delta_2+\cdots+\xi_1^q\delta_{2m-2}$.\\
(b) For $m>i>1$,
$$\xi_iu_m+\xi_i^2\overline{u_{m-1}d_{2m-1-i}}^q=\sum_{j=1}^{2m-1-i}\delta_j\xi_{2m-i-j}^{q^i}+\sum_{k=1}^{i-1}\xi_k^{q^{i-k}}\delta_{2m-i+k}.$$
\end{lemma}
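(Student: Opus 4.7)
The plan is to interpret both sides combinatorially via the partition description of natural monomials (Lemmas~\ref{nat_mon_lem} and~\ref{delta_gen_lem}, Remark~\ref{rem_u_m-1}) and then match them via a characteristic 2 cancellation.

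By Lemma~\ref{nat_mon_lem}(i), the natural monomials in $u_m$ are in bijection with partitions of $\{0,1,\ldots,2m-1\}$ into $m$ pairs, each pair $\{a,b\}$ with $a<b$ encoding the natural factor $\xi_{b-a}^{q^a}$. Separating the partitions containing the pair $\{0,i\}$ and using Remark~\ref{rem_u_m-1} gives $u_m = \xi_i\,\overline{u_{m-1}d_{2m-1-i}}^{\,q} + P_i$ for $1<i<2m-1$ (and $u_m = \xi_1 u_{m-1}^{q^2} + P_1$ for $i=1$), where $P_i$ is the sum of natural monomials of $u_m$ in which $\xi_i=\xi_i^{q^0}$ does \emph{not} appear as a natural factor. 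In characteristic 2,
\[
\xi_i u_m + \xi_i^2\,\overline{u_{m-1}d_{2m-1-i}}^{\,q} = \xi_i P_i.
\]
Because $q>2$ forces every natural factor of a natural monomial to appear with multiplicity at most one, $\xi_i P_i$ is exactly the sum of all natural monomials of degree $2+q+\cdots+q^{i-1}+2q^i+q^{i+1}+\cdots+q^{2m-1}$ and natural degree $m+1$ containing $\xi_i$ as a natural factor.

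In partition language, $\xi_i P_i$ corresponds to partitions of the multiset $M:=\{0,0,1,\ldots,i-1,i,i,i+1,\ldots,2m-1\}$ into $m+1$ pairs (each pair having distinct entries and no natural factor repeated), with exactly one pair equal to $\{0,i\}$. Since $M$ has two copies of $0$ and two copies of $i$, the admissible partitions split into three cases according to how those copies are distributed: (I) two pairs $\{0,i\}$, which yields the non-natural monomial $\xi_i^2\cdot(\text{rest})$ and is excluded; (II) one pair $\{0,i\}$ together with one ``marked'' pair of the form $\{a,i\}$ with $a\in\{1,\ldots,2m-1\}\setminus\{i\}$; (III) no pair $\{0,i\}$ and two marked pairs. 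Hence $\xi_iP_i$ is precisely the sum of case~(II) partitions.

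For the RHS, Lemma~\ref{delta_gen_lem} identifies $\delta_j$ with the sum of natural monomials corresponding to partitions of $M\setminus\{2m-j\}$ into $m$ pairs with the two zeros in distinct pairs. Multiplying $\delta_j$ by $\xi_{2m-i-j}^{q^i}$ attaches the marked pair $\{i,2m-j\}$ with $2m-j>i$, and multiplying $\delta_{2m-i+k}$ by $\xi_k^{q^{i-k}}$ attaches the marked pair $\{i-k,i\}$ with $i-k<i$; as $j$ and $k$ range over $\{1,\ldots,2m-1-i\}$ and $\{1,\ldots,i-1\}$, every possible marked pair is realised exactly once. Hence every case~(II) partition is counted once on the RHS (by its unique marked pair) and every case~(III) partition is counted twice, vanishing in characteristic 2. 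The RHS therefore coincides with $\xi_iP_i$, proving both (a) and (b). The main technical step (and likely the main obstacle) is the bookkeeping that verifies $\overline{u_{m-1}d_{2m-1-i}}^q$ (or $u_{m-1}^{q^2}$ when $i=1$) really is the sum of natural monomials corresponding to partitions of $\{1,\ldots,2m-1\}\setminus\{i\}$ via Remark~\ref{rem_u_m-1}, and that the two-zeros-separated constraint of Lemma~\ref{delta_gen_lem} (which requires $q>2$) together with the absence of $\overline{\xi}_0$ in $R_{2m-1}$ correctly enforces the same admissibility on both sides.
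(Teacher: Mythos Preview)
Your argument is correct and is essentially the paper's own proof, recast in the language of multiset partitions rather than ``heads and tails'' of natural factors: both proofs identify $\xi_i u_m+\xi_i^2\overline{u_{m-1}d_{2m-1-i}}^{\,q}$ with the natural monomials having $\xi_i$ as a natural factor, and both show that on the right-hand side such monomials are counted once while those without $\xi_i$ are counted twice and cancel in characteristic~$2$.  One small slip: when you say that $\delta_j$ corresponds to partitions of $M\setminus\{2m-j\}$, that set still contains two copies of $i$, whereas Lemma~\ref{delta_gen_lem} actually gives partitions of $\{0,0,1,\dots,2m-1\}\setminus\{2m-j\}$ (only one copy of $i$); the extra $i$ enters only when you attach the marked pair $\{i,2m-j\}$, so you should write $M\setminus\{i,2m-j\}$ there.
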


\begin{proof} For $q>2$, Definition~\ref{delta_one_def} and
Lemma~\ref{delta_gen_lem} give $\delta_j$ as a sum of natural monomials
of natural degree $m$. Note that $\xi_{2m-i-j}^{q^i}$ has tail $q^i$
and head $q^{2m-j}$ while $\xi_k^{q^{i-k}}$ has head $q^i$ and tail $q^{i-k}$.

Using Lemma~\ref{nat_mon_lem}, $\xi_1 u_m$ is $\xi_1^2 u_{m-1}^{q^2}$ plus the sum of the natural monomials of degree
$2+2q+q^2+q^3+\cdots +q^{2m-1}$ and natural degree $m+1$ with $\xi_1$ as a natural factor.
Since $q>2$, each natural monomial of degree $2+2q+q^2+q^3+\cdots +q^{2m-1}$ 
and natural degree $m+1$ has two natural factors with tail equal to $1$.
If one of these natural factors is $\xi_1$ then the natural monomial has one natural factor with tail $q$, say $\xi_j^q$.
In this case the natural monomial appears in $\xi_1 u_m$ and $\xi_j^q\delta_{2m-1-j}$. Otherwise, the natural monomial has
two natural factors with tail $q$ and appears twice in $\xi_{2m-2}^q\delta_1+\xi_{2m-3}^q\delta_2+\cdots+\xi_1^q\delta_{2m-2}$.
This completes the proof of part (a).

Suppose $m>i>1$. Using Lemma~\ref{nat_mon_lem}, $\xi_i u_m$ is $\xi_i^2 \overline{u_{m-1}d_{2m-1-i}}^q$ 
plus the sum of the natural monomials of degree
$2+q+q^2+q^3+\cdots +q^{2m-1}+q^i$ and natural degree $m+1$ with $\xi_i$ 
 as a natural factor. 
Since $q>2$, each natural monomial of degree $2+q+q^2+q^3+\cdots +q^{2m-1}+q^i$ and natural degree $m+1$ 
has two natural factors with tail equal to $1$.
If one of these natural factors is $\xi_i$ then there is either a natural factor with tail $q^i$ or a natural factor with head $q^i$.
In the first case the natural monomial appears in $\delta_j\xi_{2m-i-j}^{q^i}$ for some $j$ and in the second case
the natural monomial appears in $\xi_k^{q^{i-k}}\delta_{2m-i+k}$ for some $k$.
In either case, the monomial appears once on the right hand side of the expression and once on the left hand side.
If $\xi_i$ is not a natural factor then $q^i$ appears either twice as a tail, twice as a head, or once as a tail and once as a head.
In all three cases, the monomial appears twice in the right hand side of the expression. If $q^i$ appears twice as a tail then the natural monomial
appears in $\xi_{\ell}\xi_r\delta_{\ell r}^{(j)}\xi_{2m-i-j}^{q^i}$,
with $\ell<r$, $i<2m-j$ and $\{\ell,r\}\cap\{i,2m-j\}=\emptyset$,
for two choices for $j$.
If $q^i$ appears twice as a head then the natural monomial
appears in $\xi_k^{q^{i-k}}\xi_{\ell}\xi_r\delta_{\ell r}^{(2m-i+k)}$,
with $\ell<r$, $k<i$ and $\{\ell,r\}\cap\{i,i-k\}=\emptyset$,
for two choices for $k$. If $q^i$ appears once as a head and once as a tail,
the natural monomial appears in 
$\xi_{\ell}\xi_r\delta_{\ell r}^{(j)}\xi_{2m-i-j}^{q^i}$
for one choice of $j$ and in
$\xi_k^{q^{i-k}}\xi_{\ell}\xi_r\delta_{\ell r}^{(2m-i+k)}$
for one choice of $k$ (with $\ell<r$, $k<i<2m-j$
and $\{\ell,r\}\cap\{i,i-k,2m-j\}=\emptyset$).
This completes the proof of part (b).
\end{proof}

Define $F_1:=\xi_{2m-2}^{q/2} e_1+\xi_{2m-3}^{q/2} e_2+\cdots  +\xi_{1}^{q/2} e_{2m-2}$.
Recall that $e_1=u_m^{q/2-1}u_{m-1}^{q/2}z+b_1$ and, for $i>1$,
$e_i=u_m^{q/2-1}\overline{u_{m-1}d_{i-1}}^{q/2}z+b_i$ (see Lemma~\ref{ei-lem}). 
The coefficient of $z$ in $F_1$ is
$$u_m^{q/2-1}\left(u_{m-1}\xi_{2m-2}+\overline{u_{m-1}d_1}\xi_{2m-3}+\cdots +\overline{u_{m-1}d_{2m-3}}\xi_1\right)^{q/2}.$$
The first relation for $S_{m-1}^{\syp{2m-2}{q}}$ is
$$\xi_{2m-2}=d_{1,m-1}\xi_{2m-3}+d_{2,m-1}\xi_{2m-4}+\cdots +d_{2m-3,m-1}\xi_1.$$
Multiplying this by $u_{m-1}$ and interpreting the result as a relation in $R_{2m-2}$, we see that the coefficient of $z$ in $F_1$ is zero. 
Hence $F_1\in S_{m}^{\syp{2m}{q}}$ and
$$F_1^2=\xi_{2m-2}^{q} b^2_1+\xi_{2m-3}^{q} b^2_2+\cdots  +\xi_{1}^{q} b^2_{2m-2}.$$
Recall that $b_1^2=d_{1,m}+u_m^{q-2}(\delta_1+\overline{\xi}_0u_{m-1}^q)$ and, for $i>1$,\
$b_i^2=d_{i,m}+u_m^{q-2}(\delta_i+\overline{\xi}_0\overline{u_{m-1}d_i}^q)$ (see Lemma~\ref{ei-lem}).
Note that the coefficient of $\overline{\xi}_0$ in $F_1^2$ is the square of the coefficient of $z$ in $F_1$.
Therefore $$F_1^2=\xi_{2m-2}^{q} d_{1,m}+\cdots  +\xi_{1}^{q} d_{2m-2,m}
+u_m^{q-2}(\xi_{2m-2}^{q} \delta_1+\cdots  +\xi_{1}^{q} \delta_{2m-2}).$$
The second relation for $S_{m}^{\syp{2m}{q}}$ is
$$\xi^q_{2m-1}=\xi_{2m-2}^{q} d_{1,m}+\cdots  +\xi_{1}^{q} d_{2m-2,m}+\xi_1d_{2m,2m}.$$
Using this and part (a) of Lemma~\ref{del_rel} gives
$$F_1^2=\xi_{2m-1}^q+\xi_1 d_{2m,2m}+u_m^{q-2}(\xi_1u_m+\xi_1^2u_{m-1}^{q^2})$$
for $q>2$.
Since $d_{2m,2m}=u_m^{q-1}$, we are left with $F_1^2=\xi_{2m-1}^q+\xi_1^2u_{m-1}^{q^2}u_m^{q-2}$.
Therefore $F_1=\xi_{2m-1}^{q/2}+\xi_1u_{m-1}^{q^2/2}u_m^{q/2-1}$. This gives the relation

\begin{equation}\label{Oodd-eqn1}
\xi_{2m-1}^{q/2}=\xi_{2m-2}^{q/2}e_1+\xi_{2m-3}^{q/2}e_2+\cdots +\xi_{1}^{q/2}e_{2m-2}
+\xi_1u_{m-1}^{q^2/2}u_m^{q/2-1}.
\end{equation}

For $1<i<m$,
$$F_i:=\sum_{j=1}^{2m-1-i} \xi_{2m-i-j}^{q^i/2} e_j +\sum_{k=1}^{i-1} \xi_k^{q^{i-k}/2} e_{2m-i+k}.$$
Multiply the $i^{\rm th}$ relation for  $S_{m-1}^{\syp{2m-2}{q}}$ (see \cite[Theorem 8.3.11]{benson-polyinvafinigrou:93})
by $u_{m-1}$ and interpreting the result as a relation in $R_{2m-2}$,
we see that the coefficient of $z$ in $F_i$ is zero. Therefore $F_i\in S_{m}^{\syp{2m}{q}}$ and
$$F_i^2=\sum_{j=1}^{2m-1-i} \xi_{2m-i-j}^{q^i} b^2_j +\sum_{k=1}^{i-1} \xi_k^{q^{i-k}} b^2_{2m-i+k}.$$
Note that the coefficient of $\overline{\xi}_0$ in $F_i^2$ is the square of the coefficient of $z$ in $F_i$.
Hence 
\begin{eqnarray*} F_i^2&=&\sum_{j=1}^{2m-1-i} \xi_{2m-i-j}^{q^i} d_{j,m} +\sum_{k=1}^{i-1} \xi_k^{q^{i-k}} d_{2m-i+k,m}\\
&&+u_m^{q-2}\left(\sum_{j=1}^{2m-1-i} \xi_{2m-i-j}^{q^i} \delta_j +\sum_{k=1}^{i-1} \xi_k^{q^{i-k}} \delta_{2m-i+k}\right).
\end{eqnarray*}
Using the $i+1$ relation for $S_{m}^{\syp{2m}{q}}$ and part (b) of Lemma~\ref{del_rel} gives
$$F_i^2=\xi_{2m-i}^{q^i}+\xi_i d_{2m,2m}+u_m^{q-2}\left(\xi_i u_m+\xi_i^2\overline{u_{m-1}d_{2m-1-i}}^q\right)$$
for $q>2$.
Since $d_{2m,2m}=u_m^{q-1}$, we are left with 
$F_i^2=\xi_{2m-i}^{q^i}+\xi_i^2\overline{u_{m-1}d_{2m-1-i}}^{q}u_m^{q-2}$,
which gives $F_i=\xi_{2m-i}^{q^i/2}+\xi_i \overline{u_{m-1}d_{2m-1-i}}^{q/2} u_m^{q/2-1}$. 
This gives the relation
\begin{equation}\label{Oodd-eqni}
\xi_{2m-i}^{q^i/2}=\sum_{j=1}^{2m-1-i} \xi_{2m-i-j}^{q^i/2} e_j +\sum_{k=1}^{i-1} \xi_k^{q^{i-k}/2} e_{2m-i+k}
+\xi_i \overline{u_{m-1}d_{2m-1-i}}^{q/2} u_m^{q/2-1}.
\end{equation}

Recall that $e_{2m-1}=u_m^{q/2-1}\overline{u_{m-1}d_{2m-2}}^{q/2}z+b_{2m-1}$ and
$e_1=u_m^{q/2-1}u_{m-1}^{q/2}z+b_1$. Since $\overline{u_{m-1}d_{2m-2}}=u_{m-1}^q$,
$$P_{2m-1}:=e_{2m-1}-u_{m-1}^{(q^2-q)/2}e_1\in S_{m}^{\syp{2m}{q}}.$$
Furthermore
$$\deg(e_{2m-1})=(q^{2m}-q)/2<(q-1)q^{2m-1}=\deg(d_{1,m})<q^{2m}+1=\deg(\xi_{2m}).$$
Therefore $P_{2m-1}\in R_{2m-1}$ and
\begin{equation} \label{Oodd-exrel1}
e_{2m-1}=u_{m-1}^{(q^2-q)/2}e_1+P_{2m-1}\in R_{2m-1}[e_1].
\end{equation} 
For $0<i<m$ define
$$P_{2m-i}:=e_{2m-i}+\sum_{j=1}^i e_jP_{m-i,m-j}^{q^j/2}$$
where $P_{m-i,m-j}\in R_{2m-2j-1}$ as defined in \cite[Proposition 8.3.7]{benson-polyinvafinigrou:93}).
Taking the $m-1+i$ relation in $S_{m-1}^{\syp{2m-2}{q}}$, multiplying by $u_{m-1}$, and interpreting the result as a relation in
$R_{2m-2}$, we see that the coefficient of $z$ in $P_{2m-i}$ is zero. By comparing degrees, we see that $P_{2m-i}\in R_{2m-1}$.
This gives 
\begin{equation} \label{Oodd-exrelgen}
e_{2m-i}=\sum_{j=1}^i e_jP_{m-i,m-j}^{q^j/2}+P_{2m-i}\in R_{2m-1}[e_1,e_2,\ldots,e_i].
\end{equation} 

Define a $(2m-2)\times (2m-2)$ matrix with entries in $R_{2m-3}$ by
\begingroup
\setlength\arraycolsep{2pt}
$$\widetilde{M}_m:=\begin{pmatrix}
      0& \xi_{1}^{q/2}& \xi_{2} ^{q/2}     &\xi_3 ^{q/2}  &\xi_4^{q/2}&& \cdots && \xi_{2m-3}^{q/2}\\
\xi_1^{q/2}& 0         & \xi_{1}^{q^2/2}  & \xi_{2}^{q^2/2}   &  \xi_3^{q^2/2}&& \cdots && \xi_{2m-4}^{q^2/2}\\
\xi_2^{q/2}& \xi_1^{q^2/2}& 0            &\xi_1^{q^3/2} &\xi_2^{q^3/2} && \cdots && \xi_{2m-5}^{q^3/2}\\
&&&&\vdots&&&& \\
\xi_{m-2}^{q/2}&\xi_{m-3}^{q^2/2}&\cdots &\xi_1^{q^{m-2}/2}&0&\xi_1^{q^{m-1}/2}&\xi_2^{q^{m-1}/2}&\cdots &\xi_{m-1}^{q^{m-1}/2}\\
1&0&0&0&& \cdots &&0& 0\\
0&1&0&0&&\cdots &0& 0&P_{m-2,m-2}^{q^2/2}\\
0&0&1&0&0&\cdots &0& P_{m-3,m-3}^{q^3/2}&P_{m-3,m-2}^{q^2/2}\\
&&&&\vdots&&&& \\
0&\cdots&0&1&0&P_{1,1}^{q^{m-1}/2}& P_{1,2}^{q^{m-2}/2}&\cdots&P_{1,m-2}^{q^2/2}
\end{pmatrix}.
$$ 
\endgroup
Using Theorem~\ref{Oodd_det_thm}, we see that $\det(\widetilde{M}_m)=u_{m-1}^{q/2}$.
Equations~\ref{Oodd-eqn1}, \ref{Oodd-eqni}, \ref{Oodd-exrel1} and \ref{Oodd-exrelgen}
can be written in matrix form as 
\begin{equation}\label{Oodd-matrix-eqn}
\widetilde{M}_m \begin{pmatrix} e_{2m-1}\\ e_{2m-2} \\ \vdots\\ e_2 \end{pmatrix} 
=\begin{pmatrix} \xi_{2m-2}^{q/2}e_1+\xi_{2m-1}^{q/2}+u_m^{q/2-1}\xi_1u_{m-1}^{q^2/2}\\ 
\xi_{2m-3}^{q^2/2}e_1+\xi_{2m-2}^{q^2/2}+u_m^{q/2-1}\xi_2\overline{u_{m-1}d_{2m-3}}^{q/2} \\ 
\vdots\\
\xi_m^{q^{m-1}/2}e_1+\xi_{m+1}^{q^{m-1}/2}+u_m^{q/2-1}\xi_{m-1}\overline{u_{m-1}d_m}^{q/2} \\ 
u_{m-1}^{q(q-1)/2}e_1+P_{2m-1} \\
P^{q/2}_{m-2,m-1} e_1+P_{2m-2}\\
\vdots\\
P_{1,m-1}^{q/2} e_1+P_{m+1}\end{pmatrix}.
\end{equation}
Note that the entries on the right hand side of this equation lie in $R_{2m-1}[e_1]$.

\section{Unique Factorisation}\label{sec: ufd}

In this section we complete the computation of $S_m[z]^{\orth{2m+1}{q}}$ for $q$ even with $q>2$.
Using Theorem~\ref{ortho_hsop_thm_char2}, 
    $$
        \HH=\{\xi_0,\xi_1,\ldots,\xi_{m},e_1,e_2,\ldots,e_{m}\}
    $$
is a homogeneous system of parameters.

Let $A$ denote the subalgebra of $S_m[z]^{\orth{2m+1}{q}}$ generated by 
    $$
        \HH\cup\{\xi_{m+1},\xi_{m+2},\ldots,\xi_{2m-1},e_{m+1},e_{m+2},\ldots,e_{2m-1}\}\, .
    $$
It follows from Lemma~\ref{ei-lem} and the definition of $e_1$ that $S_m^{\syp{2m}{q}}\subset A$.
Thus $R_{2m}\subset A$ and, using Theorem~\ref{thm_FF-char2}, $\FF(A)=\FF(S_m[z]^{\orth{2m+1}{q}})$.
Therefore, to show that $A=S_m[z]^{\orth{2m+1}{q}}$, it is sufficient to show that $A$ is integrally closed in its field of fractions.
Since $\det(\widetilde{M}_m)=u_{m-1}^{q/2}$, we can use Equation~\ref{Oodd-matrix-eqn} to solve for $e_i$, with $2\leq i\leq 2m-1$,
as an element of $R_{2m-1}[e_1,u_{m-1}^{-1}]$.
Thus $A[u_{m-1}^{-1}]=R_{2m-1}[\xi_0,e_1,u_{m-1}^{-1}]$.
Since $R_{2m-1}[\xi_0,e_1]$ is a polynomial algebra it is a UFD. Therefore $A$ is a UFD if $u_{m-1}$ is prime in $A$, see 
\cite[Theorem 20.2]{matsumura-commringtheo:86}.

\begin{lemma} \label{Oodd-alg-ind} The set $\{e_1,e_2,\xi_0,\xi_1,\ldots,\xi_{2m-1}\}\setminus\{\xi_{2m-3}\}$ is algebraically independent.
\end{lemma}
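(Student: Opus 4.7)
The final statement asserts that a specific set of $2m+1$ elements of $S_m[z]^{\orth{2m+1}{q}}$ is algebraically independent, which is the maximum possible since $\dim_{\field_q} V = 2m+1$; equivalently, these elements must form a transcendence basis of $\FF(S_m[z])$ over $\field_q$. My plan is to lift the structure of the proof of Lemma~\ref{O7-alg-ind} (the $m=3$ case) to general $m$. Introduce
$$L := \field_q\bigl(e_1, e_2, \xi_0, \xi_1, \ldots, \xi_{2m-4}, \xi_{2m-2}, \xi_{2m-1}\bigr)$$
and the auxiliary field
$$K := \field_q\bigl(\xi_1, \ldots, \xi_{2m-4}, \xi_{2m-2}, \xi_{2m-1}, d_{1,m}, d_{2,m}\bigr).$$
Since $L$ has $2m+1$ generators, it suffices to show its transcendence degree is $2m+1$, which will be accomplished by proving that $K$ has transcendence degree $2m$, that $\xi_0$ is transcendental over $K$, and that $e_1, e_2$ are algebraic over $K(\xi_0)$.

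For the first step, Lemma~\ref{nat_mon_lem}(ii) shows $u_m d_{1,m}$ is linear in $\xi_{2m}$ with a nonzero leading coefficient (namely $u_{m-1}^q$) living in $R_{2m-1}$, so $\xi_{2m}$ lies in $\field_q(\xi_1, \ldots, \xi_{2m-1}, d_{1,m}) \subseteq K(\xi_{2m-3})$; since $\field_q(\xi_1, \ldots, \xi_{2m})$ has transcendence degree $2m$ by \cite[Thm~8.3.4]{benson-polyinvafinigrou:93}, this forces $K(\xi_{2m-3})$ to have transcendence degree $2m$. It then remains to show $\xi_{2m-3}$ is algebraic over $K$. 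To this end I would use Theorem~\ref{Oodd_det_thm} to write $u_m d_{i,m} = M_m(2m+1-i)$ and observe that because $\xi_{2m}$ occurs only in entry $(1, 2m+1)$ of $M_m$, the expansion is linear: $u_m d_{i,m} = A_i + \xi_{2m} B_i$ with $A_i, B_i \in R_{2m-1}$ for $i \in \{1,2\}$. The combination
$$B_2 (u_m d_{1,m}) + B_1 (u_m d_{2,m}) = B_2 A_1 + B_1 A_2 \in R_{2m-1},$$
after dividing both sides by $u_m$ via \cite[Lemma~8.3.5]{benson-polyinvafinigrou:93}, yields an identity $B_2 d_{1,m} + B_1 d_{2,m} = G$ with $G \in R_{2m-1}$. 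Substituting a formal variable $t$ for $\xi_{2m-3}$ produces a polynomial $H(t) \in K[t]$ satisfying $H(\xi_{2m-3}) = 0$, so $\xi_{2m-3}$ is algebraic over $K$ provided $H \not\equiv 0$.

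Once $K$ has transcendence degree $2m$, note that $\xi_0 = z^2 + \overline{\xi}_0$ involves $z \notin \FF(S_m)$ and is therefore transcendental over $K$, giving $K(\xi_0)$ transcendence degree $2m+1$. Combining the definition of $e_1$ with Lemma~\ref{ei-lem} and $z^2 = \xi_0 + \overline{\xi}_0$, both $e_1^2$ and $e_2^2$ lie in $\field_q[\xi_0, \xi_1, \ldots, \xi_{2m-1}]$; since $\xi_{2m-3}$ is algebraic over $K$, each $e_i^2$ is algebraic over $K(\xi_0)$ and hence so is $e_i$. Therefore $L = K(\xi_0)(e_1, e_2)$ is a finite extension of $K(\xi_0)$ of transcendence degree $2m+1$, completing the argument. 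The principal obstacle is verifying that $H(t)$ is not identically zero. In the $m=3$ case this was done by extracting the coefficient of $t^{q^2}$, identifying it as $\xi_2^q d_{1,3} + \varepsilon$ with $\varepsilon \in \field_q[\xi_1, \xi_2, \xi_4, \xi_5]$, and invoking algebraic independence of $\{\xi_1, \xi_2, \xi_4, \xi_5, d_{1,3}\}$. I expect the combinatorial description of $u_m d_{i,m}$ in Lemma~\ref{nat_mon_lem}(ii) to expose an analogous distinguished power of $t$ whose coefficient has a nontrivial $d_{1,m}$-part, nonzero by algebraic independence of $\{\xi_1, \ldots, \xi_{2m-1}, d_{1,m}\}$ inside $\field_q(\xi_1, \ldots, \xi_{2m})$.
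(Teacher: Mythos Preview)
Your proposal is essentially the paper's own proof: the same auxiliary fields $L$ and $K$, the same reduction to showing $K$ has transcendence degree $2m$, and the same elimination of $\xi_{2m}$ from $u_m d_{1,m}$ and $u_m d_{2,m}$ (your $B_1$ and $B_2$ are exactly $u_{m-1}^q$ and $\overline{u_{m-1}d_1}^q$ by Lemma~\ref{nat_mon_lem}). The paper resolves the $H\not\equiv 0$ obstacle precisely as you anticipate, by writing $u_{m-1}=\xi_{2m-3}u_{m-2}^q+\varepsilon_1$ and $\overline{u_{m-1}d_1}=\xi_{2m-2}u_{m-2}^q+\gamma\,\xi_{2m-3}^q+\varepsilon_2$ with $\gamma,\varepsilon_1,\varepsilon_2\in R_{2m-4}\setminus\{0\}$ and reading off the coefficient of $t^{q^2}$, namely $\gamma^q d_{1,m}+\varepsilon$ with $\varepsilon\in R_{2m-1}/\langle\xi_{2m-3}\rangle$, which is nonzero by algebraic independence of $\{d_{1,m},\xi_1,\ldots,\xi_{2m-1}\}\setminus\{\xi_{2m-3}\}$.

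One small slip to fix: from the definition of $e_1$ and Lemma~\ref{ei-lem} one has
\[
e_i^2 \;=\; u_m^{q-2}\,\overline{u_{m-1}d_{i-1}}^{\,q}\,\xi_0 \;+\; d_{i,m} \;+\; u_m^{q-2}\delta_i,
\]
so $e_i^2$ lies in $\field_q[\xi_0,\xi_1,\ldots,\xi_{2m-1},d_{i,m}]$, not in $\field_q[\xi_0,\xi_1,\ldots,\xi_{2m-1}]$ as you wrote. Since $d_{i,m}\in K$ by construction and $\xi_{2m-3}$ is algebraic over $K$, your conclusion that each $e_i$ is algebraic over $K(\xi_0)$ is unaffected.
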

\begin{proof} Let $L$ denote the field generated by $\{e_1,e_2,\xi_0,\xi_1,\ldots,\xi_{2m-1}\}\setminus\{\xi_{2m-3}\}$ and
let $K$ denote the field generated by $\{d_{1,m},d_{2,m},\xi_1,\ldots,\xi_{2m-1}\}\setminus\{\xi_{2m-3}\}$. 
We will show that the transcendence  degree of $K$ is $2m$.
Since $K(\xi_0)$ has transcendence  degree $1$ over $K$ and $L$ is a finite extension of $K(\xi_0)$,
this shows that $L$ has transcendence degree $2m+1$, proving the result.

We will use the expressions for $u_m d_{1,m}$ and $u_md_{2,m}$ as elements in $R_{2m}$ given by Lemma~\ref{nat_mon_lem}.
Since the expression for $u_m d_{1,m}$ has degree $1$ as a polynomial in $\xi_{2m}$, 
$K(\xi_{2m-3})=\field_q(\xi_1,\ldots,\xi_{2m})=\field_q(\xi_1,\ldots,\xi_{2m-1},d_{1,m})$
has transcendence degree $2m$. We will show that $K\subset K(\xi_{2m-3})$ is a finite extension.

Cross multiplying to eliminate $\xi_{2m}$, define 
    $$
        F:=\overline{u_{m-1} d_1}^q u_m d_{1,m} -u_{m-1}^q u_m d_{2,m}\in R_{2m-1}\, .
    $$
Dividing by $u_m$ gives $\overline{u_{m-1} d_1}^q d_{1,m} -u_{m-1}^q d_{2,m}=F/u_m\in R_{2m-1}$ (see \cite[Lemma 8.3.5]{benson-polyinvafinigrou:93}).

Define $\overline{F}(t)\in(R_{2m-1}/\langle\xi_{2m-3}\rangle)[t]$ so that $\overline{F}(\xi_{2m-3})=F/u_m$.
Using Lemma~\ref{nat_mon_lem}, $u_{m-1}=\xi_{2m-3}u_{m-2}^q+\varepsilon_1$ and
$$\overline{u_{m-1}d_1}=\xi_{2m-2}u_{m-2}^q+\gamma\xi_{2m-3}^q+\varepsilon_2$$
with $\gamma,\varepsilon_1,\varepsilon_2\in R_{2m-4}\setminus \{0\}$.
Define
$$H(t):=(\xi_{2m-2}u_{m-2}^q+\gamma t^q+\varepsilon_2)^qd_{1,m}+(t u_{m-2}^q+\varepsilon_1)^qd_{2,m}+\overline{F}(t)\in K[t]$$
and observe that $H(\xi_{2m-3})=0$. Therefore, as long as $H$ is not identically zero, $\xi_{2m-3}$ is a root of a polynomial in $K[t]$ and
the field extension $K\subset K(\xi_{2m-3})$ is finite. To see that $H$ is not identically zero, note that the coefficient of $t^{q^2}$
is $\gamma^qd_{1,m}+\varepsilon$ for some 
$\varepsilon\in R_{2m-1}/\langle\xi_{2m-3}\rangle$ and
$\{d_{1,m},\xi_1,\ldots,\xi_{2m-1}\}\setminus\{\xi_{2m-3}\}$ is algebraically independent.
\end{proof}

\begin{lemma}\label{um-lt-lem} $u_m\equiv \xi_m^{1+q+\cdots+q^{m-1}} \mod{\langle \xi_1,\xi_2,\ldots,\xi_{m-1}\rangle}$.
\end{lemma}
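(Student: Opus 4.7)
The plan is to use the combinatorial description of $u_m$ supplied by Lemma~\ref{nat_mon_lem}(i). That lemma expresses $u_m$ as the sum of all natural monomials in $R_{2m-1}$ of degree $1+q+\cdots+q^{2m-1}$ and natural degree $m$. Moreover, as in the proof of Lemma~\ref{nat_mon_lem}(i), each such natural monomial corresponds uniquely to a partition of $\{0,1,\ldots,2m-1\}$ into $m$ pairs, where a pair $\{k,k'\}$ with $k<k'$ contributes the natural factor $\xi_{k'-k}^{q^k}$.

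Reducing modulo $\langle \xi_1,\xi_2,\ldots,\xi_{m-1}\rangle$ kills every natural monomial having any natural factor $\xi_j^{q^k}$ with $j<m$. So the surviving contributions come from partitions whose every pair $\{k,k'\}$ satisfies $k'-k\geq m$.

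The key combinatorial step is to show that there is exactly one such partition, namely
$$\{0,m\},\,\{1,m+1\},\,\ldots,\,\{m-1,2m-1\}.$$
First, any pair $\{k,k'\}$ with $k'-k\geq m$ must have $k\in\{0,\ldots,m-1\}$ and $k'\in\{m,\ldots,2m-1\}$; since the partition covers $\{0,\ldots,2m-1\}$ with $m$ pairs, every element of the lower half is paired with exactly one element of the upper half. Write this pairing as $i\mapsto m+\sigma(i)$ for a permutation $\sigma$ of $\{0,\ldots,m-1\}$. The constraint $(m+\sigma(i))-i\geq m$ becomes $\sigma(i)\geq i$ for all $i$. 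Summing over $i$ and using $\sum_i\sigma(i)=\sum_i i$ forces $\sigma(i)=i$ for every $i$, yielding the claimed unique partition.

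The corresponding natural monomial is $\xi_m\cdot\xi_m^q\cdot\xi_m^{q^2}\cdots\xi_m^{q^{m-1}}=\xi_m^{1+q+\cdots+q^{m-1}}$, which gives the congruence. No serious obstacle arises; the argument is essentially the pigeonhole-style counting above, carried out on the partition description of $u_m$.
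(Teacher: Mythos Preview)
Your proof is correct and follows essentially the same approach as the paper: both invoke Lemma~\ref{nat_mon_lem}(i) to write $u_m$ as a sum of natural monomials and then argue that every summand other than $\xi_m^{1+q+\cdots+q^{m-1}}$ contains some $\xi_i$ with $i<m$. The paper simply asserts this last fact, whereas you supply the explicit partition-counting argument (pairing lower half with upper half and forcing $\sigma(i)=i$); your version is a fleshed-out form of the same idea rather than a different route.
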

\begin{proof} Using Lemma~\ref{nat_mon_lem}, $\xi_m^{1+q+\cdots+q^{m-1}} $ appears in $u_m$.
Furthermore, all of the other terms in $u_m$ include a factor of $\xi_i$ for some $i$ less than $m$.
\end{proof}

\begin{theorem}\label{main_theorem} (a) $A=S_m[z]^{\orth{2m+1}{q}}$.\\
(b) $S_m[z]^{\orth{2m+1}{q}}$ is a complete intersection with relations given by 
Equations~\ref{Oodd-eqn1}, \ref{Oodd-eqni}, \ref{Oodd-exrel1} and \ref{Oodd-exrelgen}.\\
(c) $S_m[z]^{\orth{2m+1}{q}}$ is a free module over $\field_q[\H]$ with module generators given by the monomial factors of 
$\prod_{i=1}^{m-1} \xi_{2m-i}^{(q^i/2)-1}$.
\end{theorem}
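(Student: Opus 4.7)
The plan is to mirror the proof of Theorem~\ref{O7-main-thm}. Introduce the polynomial algebra $T := R_{2m-1}[\xi_0, E_1, \ldots, E_{2m-1}]$ of Krull dimension $4m-1$ and let $\rho \colon T \to A$ be the algebra epimorphism sending $E_i$ to $e_i$. Let $r_1, \ldots, r_{2m-2} \in T$ be the elements arising from Equations~\ref{Oodd-eqn1}, \ref{Oodd-eqni}, \ref{Oodd-exrel1} and \ref{Oodd-exrelgen}, enumerated so that $r_1, \ldots, r_{m-1}$ are the $\xi$-rewriting relations (with $r_i$ having leading term $\xi_{2m-i}^{q^i/2}$) and $r_m, \ldots, r_{2m-2}$ are the $E$-rewriting relations (with $r_{m-1+k}$ having leading term $E_{2m-k}$ for $1 \leq k \leq m-1$). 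Set $\overline{T} := T/\langle r_1, \ldots, r_{2m-2}\rangle$ with induced epimorphism $\overline{\rho} \colon \overline{T} \to A$; the goal is to show $\overline{\rho}$ is an isomorphism.

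For the complete intersection assertion, consider the length $4m-1$ sequence
\[
\xi_0, \xi_1, \ldots, \xi_m, E_1, \ldots, E_m, r_1, r_2, \ldots, r_{2m-2}
\]
in $T$. Modulo the ideal $I := \langle \xi_0, \xi_1, \ldots, \xi_m, E_1, \ldots, E_m\rangle$, each relation $r_i$ with $1 \leq i \leq m-1$ reduces to $\xi_{2m-i}^{q^i/2}$ (every other summand contains a factor $E_j$ with $j \leq m$ or a factor $\xi_k$ with $k \leq m-1$), while each $r_{m-1+k}$ with $1 \leq k \leq m-1$ reduces to $E_{2m-k}$ plus a polynomial in $\xi_{m+1}, \ldots, \xi_{2m-1}$. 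Consequently, modulo $I + \langle r_1, \ldots, r_{2m-2}\rangle$ every generator of $T$ is nilpotent, so the displayed sequence cuts out the maximal homogeneous ideal of $T$. Since a partial homogeneous system of parameters in a graded polynomial ring is automatically a regular sequence, $r_1, \ldots, r_{2m-2}$ is a regular sequence in $T$, making $\overline{T}$ a complete intersection of Krull dimension $2m+1$, hence Cohen-Macaulay.

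By Theorem~\ref{Oodd_det_thm}, $\det(\widetilde{M}_m) = u_{m-1}^{q/2}$, so Equation~\ref{Oodd-matrix-eqn} solves for $\overline{E_2}, \ldots, \overline{E_{2m-1}}$ in $\overline{T}[u_{m-1}^{-1}]$ as elements of $R_{2m-1}[\xi_0, \overline{E_1}][u_{m-1}^{-1}]$. Hence $\overline{T}[u_{m-1}^{-1}] = R_{2m-1}[\xi_0, \overline{E_1}][u_{m-1}^{-1}]$, a localization of a polynomial algebra (algebraic independence follows from Theorem~\ref{thm_FF-char2}), and therefore a UFD. By \cite[Theorem~20.2]{matsumura-commringtheo:86}, $\overline{T}$ is itself a UFD once $u_{m-1}$ is prime in $\overline{T}$. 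To establish primality, follow the elimination in the $O_7$ proof: using $u_{m-1} = \xi_{2m-3}u_{m-2}^q + \varepsilon_1$ from Lemma~\ref{nat_mon_lem}, one localizes at a suitable element $f$ (for $m = 3$, $f = \xi_1$; in general, an element making the coefficient of $\xi_{2m-3}$ invertible) to eliminate $\xi_{2m-3}$ modulo $u_{m-1}$, then uses $r_2$ together with $r_m, r_{m+1}, \ldots, r_{2m-2}$ to successively eliminate $\overline{E_3}, \overline{E_4}, \ldots, \overline{E_{2m-1}}$. This embeds $\overline{T}[f^{-1}]/\langle u_{m-1}\rangle$ into a localization of $\field_q[\{\xi_0, \xi_1, \ldots, \xi_{2m-1}, \overline{E_1}, \overline{E_2}\} \setminus \{\xi_{2m-3}\}]$, which is a domain by Lemma~\ref{Oodd-alg-ind}. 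Since Lemma~\ref{um-lt-lem} (applied with $m-1$ in place of $m$) shows $u_{m-1} \not\equiv 0 \pmod{\langle \xi_1\rangle}$, making $\xi_1, u_{m-1}$ a regular sequence in the Cohen-Macaulay ring $\overline{T}$, \cite[Proposition~1.1]{Kropholler+Rajaei+Segal:05} transfers primality of $u_{m-1}$ from $\overline{T}[f^{-1}]$ back to $\overline{T}$.

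With $\overline{T}$ a UFD, hence integrally closed, and $\overline{\rho}$ a graded surjection of graded domains inducing an isomorphism on fraction fields (Theorem~\ref{thm_FF-char2}), $\overline{\rho}$ must be an isomorphism. Therefore $A$ is integrally closed; combined with the integrality of $S_m[z]^{\orth{2m+1}{q}}$ over $A$ (provided by the hsop $\H \subset A$) and the equality of fraction fields, this forces $A = S_m[z]^{\orth{2m+1}{q}}$, proving parts (a) and (b). For part (c), Cohen-Macaulayness implies that $S_m[z]^{\orth{2m+1}{q}}$ is a free $\field_q[\H]$-module of rank
\[
\frac{\prod_{h \in \H}\deg(h)}{|\orth{2m+1}{q}|} = 2^{1-m} q^{m(m-1)/2} = \prod_{i=1}^{m-1}\frac{q^i}{2},
\]
which equals the number of monomial factors of $\prod_{i=1}^{m-1}\xi_{2m-i}^{(q^i/2)-1}$; that these monomials span follows by iterated application of the relations, and the rank count promotes them to a basis. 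The principal technical difficulty will be the primality of $u_{m-1}$: adapting the concrete elimination from the $O_7$ proof to arbitrary $m$ requires carefully choosing a localization that both allows $\xi_{2m-3}$ to be eliminated via $u_{m-1} \equiv 0$ and yields a regular sequence with $u_{m-1}$.
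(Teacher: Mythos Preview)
Your overall architecture matches the paper's proof closely, and your regularity argument for $\overline{T}$ being a complete intersection is fine. The genuine gap is in the primality step for $u_{m-1}$, where the elimination scheme you propose does not work for $m>3$.

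You say you will ``use $r_2$ together with $r_m,r_{m+1},\ldots,r_{2m-2}$ to successively eliminate $\overline{E_3},\overline{E_4},\ldots,\overline{E_{2m-1}}$''. That is $m$ relations against $2m-3$ unknowns, which balances only when $m=3$. For $m\geq 4$ this system is underdetermined and you cannot conclude that $(\overline{T}/\langle u_{m-1}\rangle)[f^{-1}]$ embeds in a localized polynomial ring on the variables of Lemma~\ref{Oodd-alg-ind}. The paper's fix is to use \emph{all} of the $\xi$-rewriting relations $r_2^{(1)},\ldots,r_{m-1}^{(1)}$ (your $r_2,\ldots,r_{m-1}$), not just $r_2$, together with $r_2^{(2)},\ldots,r_{m-1}^{(2)}$; these assemble into a $(2m-4)\times(2m-4)$ linear system in $\overline{E_3},\ldots,\overline{E_{2m-2}}$ whose coefficient matrix $\overline{M}$ is the submatrix of $\widetilde{M}_m$ obtained by deleting rows $1,m$ and columns $1,2m-2$. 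The crucial computation, via Theorem~\ref{Oodd_det_thm}, is $\det(\overline{M})=u_{m-2}^{q^2/2}$. This forces the localizing element to be $f=u_{m-2}$ (not an unspecified ``suitable element''), and the relation $r_1^{(2)}$ handles $\overline{E_{2m-1}}$ separately.

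A second, related gap: to transfer primality of $u_{m-1}$ from $\overline{T}[u_{m-2}^{-1}]$ back to $\overline{T}$ via \cite[Proposition~1.1]{Kropholler+Rajaei+Segal:05}, you need $u_{m-2},u_{m-1}$ to be a regular sequence in $\overline{T}$. Your regular-sequence argument establishes that $\xi_0,\ldots,\xi_m,E_1,\ldots,E_m$ descend to a regular sequence in $\overline{T}$, but says nothing about $u_{m-2},u_{m-1}$. The paper handles this upstream: instead of your sequence, it takes the list of generators of $T$ and replaces $\xi_{m-2}$ by $u_{m-2}$, $\xi_{m-1}$ by $u_{m-1}$, $\xi_{2m-i}$ by $r_i^{(1)}$, and $E_{2m-i}$ by $r_i^{(2)}$, using Lemma~\ref{um-lt-lem} to check this is still a system of parameters. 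That single construction simultaneously shows $\overline{T}$ is a complete intersection and that $u_{m-2},u_{m-1}$ is regular in $\overline{T}$.
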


\begin{proof} Let $T$ denote the polynomial algebra $R_{2m-1}[\xi_0,E_1,E_2,\ldots,E_{2m-1}]$ and let $\rho:T\to A$ denote the algebra epimorphism taking $E_i$ to $e_i$.  Define 
    $$
        r_1^{(1)}:=\xi_{2m-1}^{q/2}+\xi_{2m-2}^{q/2}E_1+\xi_{2m-3}^{q/2}E_2+\cdots \xi_{1}^{q/2}E_{2m-2}
        +\xi_1u_{m-1}^{q^2/2}u_m^{q/2-1}
    $$
and, for $1<i<m-1$,
    $$
        r_i^{(1)}:=\xi_{2m-i}^{q^i/2}+\sum_{j=1}^{2m-1-i} \xi_{2m-i-j}^{q^i/2} E_j +\sum_{k=1}^{i-1} \xi_k^{q^{i-k}/2} E_{2m-i+k}
        +\xi_i \overline{u_{m-1}d_{2m-1-i}}^{q/2} u_m^{q/2-1}.
    $$
Further define
    $$
        r_1^{(2)}:=E_{2m-1}+u_{m-1}^{(q^2-q)/2}E_1+P_{2m-1} 
    $$
and, for $1<i<m-1$,
    $$
        r_i^{(2)}:=E_{2m-i}+\sum_{j=1}^i E_jP_{m-i,m-j}^{q^j/2}+P_{2m-i}.
    $$
Let $\overline{T}$ denote the quotient $T/\langle r_i^{(1)},r_i^{(2)}\mid i=1,\dots,m-1\rangle$
and observe that $\rho$ induces an epimorphism  from $\overline{T}$ to $A$, say $\overline{\rho}$.
In the following we identify $R_{2m-1}$ with its image in $\overline{T}$ and use $\overline{E}_i$
to denote the equivalence class of $E_i$ in $\overline{T}$.
Note that $$r_i^{(1)}\equiv \xi_{2m-i}^{q^i/2}\mod \langle\xi_1,\ldots,\xi_{2m-i-1}\rangle$$
and $$r_i^{(2)}\equiv E_{2m-i}\mod \langle\xi_1,\ldots,\xi_{2m-1}\rangle.$$
Using Lemma~\ref{um-lt-lem}, 
$$u_{m-1}\equiv \xi_{m-1}^{(q^{m-1}-1)/(q-1)} \mod  \langle\xi_1,\ldots,\xi_{m-2}\rangle$$
and
$$u_{m-2}\equiv \xi_{m-2}^{(q^{m-2}-1)/(q-1)} \mod  \langle\xi_1,\ldots,\xi_{m-3}\rangle.$$
In the polynomial ring $T$ a partial homogeneous system of parameters is a regular sequence.
If we take the regular sequence given by the variables and replace $\xi_{m-2}$ with $u_{m-2}$,
$\xi_{m-1}$ with $u_{m-1}$, $\xi_{2m-i}$ with $r_i^{(1)}$ and $E_{2m-i}$ with $r_i^{(2)}$, we get a new regular sequence.
From this we see that $\overline{T}$ as a complete intersection and $u_{m-2}$, $u_{m-1}$ is a regular sequence in $\overline{T}$.

Using Equation~\ref{Oodd-matrix-eqn}, in $\overline{T}$, we have
$$\widetilde{M}_m \begin{pmatrix} \overline{E}_{2m-1}\\ \overline{E}_{2m-2} \\ \vdots\\ \overline{E}_2 \end{pmatrix} \in \left(R_{2m-1}[\overline{E}_1]\right)^{2m-2}.$$
Since $\det(\widetilde{M}_m)=u_{m-1}^{q/2}$, we can use this to solve for $\overline{E}_i$ in $R_{2m-1}[\overline{E}_1,u_{m-1}^{-1}]$ when $i>1$.
Therefore $\overline{T}[u_{m-1}^{-1}]=R_{2m-1}[\xi_0,\overline{E}_1, u_{m-1}^{-1}]$.
Since $R_{2m-1}[\xi_0,e_1]$ is a polynomial ring, $R_{2m-1}[\xi_0,\overline{E}_1]$ is a polynomial ring, and $\overline{T}$ is a UFD if $u_{m-1}$ is prime in $\overline{T}$.

From Lemma~\ref{nat_mon_lem}, we see that $u_{m-1}-\xi_{2m-3}u_{m-2}^q\in R_{2m-4}$.
Using this, we can solve for $\xi_{2m-3}$ in $(\overline{T}/\langle u_{m-1}\rangle)[u_{m-2}^{-1}]$.
(Note that, since $u_{m-1}$, $u_{m-2}$ is a regular sequence in $\overline{T}$, $u_{m-2}$ is not a zero-divisor in $\overline{T}/\langle u_{m-1}\rangle$.)
It follows from Lemma~\ref{Oodd-alg-ind} that $(R_{2m-1}/\langle \xi_{2m-3}\rangle)[e_1,e_2]$ is a polynomial ring.
We will show that $(\overline{T}/\langle u_{m-1}\rangle)[u_{m-2}^{-1}]$ is isomorphic to $(R_{2m-1}/\langle \xi_{2m-3}\rangle)[e_1,e_2,u_{m-2}^{-1}]$.
It follows from this that $u_{m-1}$ is prime in $\overline{T}$. 

Using $r_2^{(1)}$, we have $\overline{E}_{2m-1}=u_{m-1}^{(q^2-q)/2}\overline{E}_1+P_{2m-1}$, which we use to eliminate $\overline{E}_{2m-1}$ in $\overline{T}$.
Let $\overline{M}$ denote the $(2m-4)\times(2m-4)$ matrix formed from $\widetilde{M}_m$ by removing row $1$, row $m$, column $1$ and column $2m-2$.
Using Theorem~\ref{Oodd_det_thm}, $\det(\overline{M})=u_{m-2}^{q^2/2}$.
We can write Equations~\ref{Oodd-eqni} and \ref{Oodd-exrelgen} in matrix form 
$$\overline{M}\begin{pmatrix} e_{2m-2}\\e_{2m-3} \\ \vdots\\ e_3 \end{pmatrix} =\overline{v}$$
with
    $$
        \overline{v}:=\begin{pmatrix} \xi_1^{q/2}e_{2m-1}+\xi_{2m-4}^{q^2/2}e_2
        +\xi_{2m-3}^{q^2/2}e_1+\xi_{2m-2}^{q^2/2}+u_m^{q/2-1}\xi_2\overline{u_{m-1}d_{2m-3}}^{q/2} \\ 
        \vdots\\
        \xi_{m-2}^{q/2}e_{2m-1}+\xi_{m-1}^{q^{m-1}/2}e_2
        +\xi_m^{q^{m-1}/2}e_1+\xi_{m+1}^{q^{m-1}/2}+u_m^{q/2-1}\xi_{m-1}\overline{u_{m-1}d_m}^{q/2} \\ 
        P^{q^2/2}_{m-2,m-2}e_2+P^{q/2}_{m-2,m-1} e_1+P_{2m-2}\\
        P^{q^2/2}_{m-3,m-2}e_2+P^{q/2}_{m-3,m-1} e_1+P_{2m-3}\\
        \vdots\\
        P_{1,m-2}^{q/2} e_2+P_{1,m-1}^{q/2} e_1+P_{m+1}\end{pmatrix}\, .
    $$
We can use these equations to eliminate $\overline{E}_i$ in $(\overline{T}/\langle u_{m-1}\rangle)[u_{m-2}^{-1}]$ when $2<i<2m-1$.
From this we conclude that $(\overline{T}/\langle u_{m-1}\rangle)[u_{m-2}^{-1}]$ is isomorphic to $$(R_{2m-1}/\langle \xi_{2m-3}\rangle)[e_1,e_2,u_{m-2}^{-1}].$$

We have shown that $\overline{T}$ is integrally closed in its field of fractions.
Since $\overline{\rho}$ induces an isomorphism on fraction fields, it is injective.
Thus $\overline{T}$ is isomorphic to $A$, proving that $A$ is integrally closed in its field of fractions and, therefore,
 $A=S_m[z]^{\orth{2m+1}{q}}$. Furthermore, since $\overline{T}$ is a complete intersection, $S_m[z]^{\orth{2m+1}{q}}$ is a complete intersection,
 completing the proof of parts (a) and (b). Since $S_m[z]^{\orth{2m+1}{q}}$ is a complete intersection, it is Cohen-Macaulay. Thus 
 $S_m[z]^{\orth{2m+1}{q}}$  is a free module over $\field_q[\H]$ of rank
    $$
        \frac{2\prod_{i=1}^m(q^i+1)\prod_{j=1}^m\frac{1}{2}(q^j-1)q^{2m-j}}{q^{m^2}\prod_{k=1}^m(q^{2k}-1)}=\prod_{i=1}^{m-1}\frac{q^i}{2}\, .
    $$
Since the monomial factors of $\prod_{i=1}^{m-1} \xi_{2m-i}^{q^i/2-1}$ are a spanning set of size $\prod_{i=1}^{m-1}\frac{q^i}{2}$, they form a basis, proving part (c).
\end{proof} 

\begin{remark} It follows from part (c) of Theorem~\ref{main_theorem} that
$\H\cup\{\xi_{m+1},\ldots,\xi_{2m-1}\}$ is a minimal generating set for
$S_m[z]^{\orth{2m+1}{q}}$. The relations among these minimal generators can be constructed from Equation~\ref{Oodd-matrix-eqn} by substitution.
\end{remark}

The Steenrod algebra is the $\field_q$-algebra generated by the Steenrod operations subject to the Adem relations, see 
\cite[\S 8.2]{neusel+smith-invatheofinigrou:02}.

\begin{cor}
    $S[z]^{\orth{2m+1}{q}}$ is generated by $\{\xi_0,e_1\}$ as an algebra over the Steenrod algebra.
\end{cor}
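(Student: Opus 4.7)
The proof is essentially a bookkeeping exercise that assembles observations already scattered through the paper. The plan is to show, starting from $\{\xi_0,e_1\}$, that every generator of $S[z]^{\orth{2m+1}{q}}$ listed in \thmref{main_theorem} (and in the remark following it) arises by repeated application of Steenrod operations, and then to invoke the main theorem.

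First I would recall that by \thmref{main_theorem} together with the minimal generating set given in the remark after it, the invariant ring is generated as an $\field_q$-algebra by $\{\xi_0,\xi_1,\ldots,\xi_{2m-1}\}\cup\{e_1,\ldots,e_m\}$. (One could equally well take $\{e_1,\ldots,e_{2m-1}\}$, since the $e_i$ for $i>m$ are in any case produced by Steenrod operations below.) So it suffices to produce each of these generators from $\xi_0$ and $e_1$ using Steenrod operations.

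For the $\xi_i$: we have $\PP^1(\xi_0)=\xi_1$ by inspection, and \corref{com_st} gives $\PP^q(\xi_1)=\xi_2$ and $\PP^{q^i}(\xi_i)=\xi_{i+1}$ for $i>1$. Iterating, all of $\xi_1,\xi_2,\ldots,\xi_{2m-1}$ lie in the Steenrod-subalgebra generated by $\xi_0$. For the $e_i$: the definition preceding \lemref{ei-lem} is exactly $e_{i+1}:=\PP^{q^{2m-1-i}/2}(e_i)$ for $0<i<2m-1$, so iterating this starting from $e_1$ produces $e_2,\ldots,e_m$ (and indeed $e_{m+1},\ldots,e_{2m-1}$).

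Combining these two observations, every element of the generating set $\HH\cup\{\xi_{m+1},\ldots,\xi_{2m-1}\}$ lies in the $\field_q$-subalgebra of $S[z]^{\orth{2m+1}{q}}$ closed under the Steenrod operations and generated by $\{\xi_0,e_1\}$; by \thmref{main_theorem} this subalgebra is all of $S[z]^{\orth{2m+1}{q}}$. There is no real obstacle here; the only thing to double-check is that the Steenrod operations used do land inside the invariant ring, which is immediate since $\PP^i$ commutes with the $\gl{2m+1}{q}$-action and hence preserves invariants.
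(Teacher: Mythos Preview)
Your argument is correct and is exactly the reasoning the paper intends: the corollary is stated without proof because it follows immediately from \thmref{main_theorem} together with \corref{com_st} and the recursive definition $e_{i+1}=\PP^{q^{2m-1-i}/2}(e_i)$, which is precisely what you have written out.
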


\begin{remark} Lemma~\ref{del_rel}, which leads to Equations \ref{Oodd-eqn1}
and \ref{Oodd-eqni}, requires $q>2$. We believe that analogous equations hold for $q=2$, see Remark~\ref{O7-rem} for the $m=3$ case. Deriving these equations would give an alternative computation of $S[z]^{\orth{2m+1}{2}}$ to the one given in 
\cite[Theorem~ 6.1]{Kropholler+Rajaei+Segal:05}.
\end{remark}

\section*{Acknowledgement}
  The computer algebra package Magma \cite{magma:97} was very useful in this work.  It was
used to test and confirm many hypotheses in low dimensions for small $q$.
We are grateful to Karl Dilcher and Keith Taylor (Canadian Mathematical Society book editors) and Donna Chernyk (Springer) for encouraging us to work on this problem.

\providecommand{\MR}{\relax\ifhmode\unskip\space\fi MR }

\end{document}